\newtheorem{theorem}{Theorem}[section]
\newtheorem{proposition}[theorem]{Proposition}
\newtheorem{lemma}[theorem]{Lemma}
\newtheorem{claim}[theorem]{Claim}
\newtheorem{conjecture}[theorem]{Conjecture}
\theoremstyle{definition}
\newtheorem{definition}[theorem]{Definition}
\newtheorem{remark}[theorem]{Remark}
\DeclareFontFamily{OT1}{rsfs}{}
\DeclareFontShape{OT1}{rsfs}{n}{it}{<-> rsfs10}{}
\DeclareMathAlphabet{\curly}{OT1}{rsfs}{n}{it}
\newcommand\I{\mathcal I}
\newcommand\C{\mathbb C}
\newcommand\Q{\mathbb Q}
\renewcommand\k{\mathsf k}
\renewcommand\hom{\mathcal{H}{\it{om}}}
\newcommand\ext{\curly Ext}
\newcommand\INTO{\ar@{^{(}->}[r]}
\DeclareRobustCommand{\SkipTocEntry}[4]{}
\def\and{\quad\mathrm{and}\quad}
\title{Virasoro constraints for K3 surfaces and monodromy operators}
\author{Weisheng Wang}
\begin{document}

\maketitle
\newcommand{\ch} [3][2]{\operatorname{ch}^\mathrm{H}_{#2}( #3 )}
\newcommand{\plusbinomial}[3][2]{(#2 + #3)^#1}
\begin{abstract}
   The Virasoro constraints for moduli spaces of stable torsion free sheaves on a surface with only $(p,p)$-cohomology were recently proved by Bojko-Moreira-Lim. The rank 1 case, which is \emph{not} restricted to surfaces with only $(p,p)$-cohomology, was established by Moreira. We prove Virasoro constraints for K3 surfaces using Markman monodromy operators, which allow us to reduce to the rank 1 case. We also prove new Virasoro constraints in rank 0. Finally, for K3 surfaces, we introduce new Virasoro operators in negative degree which, together with the previous Virasoro operators, give a representation of Virasoro algebra with central charge $24$.
\end{abstract}

\section{Introduction}
The Virasoro operators in Gromov-Witten theory were first proposed in \cite{Eguchi_1997}, where the Virasoro operators are constructed for some Fano varieties. Here, I will recall the form of the Virasoro operators following \cite{pandharipande2003questions}. The Virasoro constraints on the moduli of stable pairs side is obtained by the GW/PT correspondence on $3$-folds \cite{moreira2020virasoro}, I will briefly review this. The Virasoro constraints on moduli of stable sheaves were first obtained on Hilbert schemes of points on a surface $S$ by restrict the stable pairs case to $S\times \mathbb{P}^1$ \cite{moreira2020virasoro}\cite{Moreira_2022}. It is then generalised to moduli of stable sheaves of higher ranks \cite{vanbree2021virasoro}\cite{bojko2022virasoro}.

\subsection{Virasoro constraint in Gromov-Witten theory}

Gromov-Witten theory is defined by integration over the moduli space of stable maps. Let $X$ be a non-singular projective variety over $\C$. A map from a connected pointed nodal curve to $X$ is a stable map if it has finite automorphism group (for more detail see chapter 24 \cite{Hori:2003ic}). A stable map $f$ represents a homology class $\beta\in H_2(X,\mathbb{Z})$ if $f_\ast[C] =\beta$. $\overline{\mathcal{M}}_{g,n}(X,\beta)$ denotes the moduli space of stable maps from $n$-pointed genus $g$ nodal curves to $X$ representing the class $\beta$, it is a proper Deligne-Mumford stack.
There are $n$ evaluation maps $\operatorname{ev}_i:\overline{\mathcal{M}}_{g,n}(X,\beta)\to X$ given by:
\[\operatorname{ev}_i(\Sigma,p_1,\dots,p_n,f)=f(p_i)\quad (1\leq i\leq n).\]
At each point $[\Sigma,p_1,\dots,p_n,f]$ of $\overline{\mathcal{M}}_{g,n}(X,\beta)$, the cotangent line to $\Sigma$ at point $p_i$ is a one-dimensional vector space; those spaces glue together to give a line bundle $
\mathbb{L}_i$ called the $i$th tautological line bundle. Define $\psi_i:=c_1(\mathbb{L}_i)$.

Let $\{\gamma_a\}$ be a homogeneous basis of $H^\ast(X,\mathbb{C})$. The descendent Gromov-Witten invariants of $X$ are:
\begin{align}
   \langle \tau_{k_1}(\gamma_{a_1})\dots\tau_{k_n}(\gamma_{a_n}) \rangle_{g,\beta}^X=\int_{\left[\overline{\mathcal{M}}_{g,n}(X,\beta)\right]^\text{vir}} \psi_1^{k_1}\operatorname{ev}_1^\ast(\gamma_{a_1})\dots\psi_n^{k_n}\operatorname{ev}_n^\ast(\gamma_{a_n}).
\end{align}
Let $\{ t^a_k \}$ be a set of variables, and define the generating function $F^X(t,\lambda)$ as:
\[F^X=\sum_{g\geq 0} \lambda^{2g-2} \sum_{\beta\in H_2(X,\mathbb{Z})} q^\beta \sum_{n\geq 0} \frac{1}{n!} \sum_{\substack{a_1\dots a_n\\k_1\dots k_n}}t^{a_n}_{k_n}\dots t^{a_1}_{k_1} \langle \tau_{k_1}(\gamma_{a_1})\dots \tau_{k_n}(\gamma_{a_n}) \rangle^X_{g,\beta}. \]
Also define the full partition function $Z^X = \exp(F^X)$. $Z^X$ is the partition function corresponds to the standard disconnected Gromov-Witten bracket $\langle,\rangle^{X,\bullet}_{g,\beta}$, where the domain nodal curve could be disconnected:
\begin{align}
    Z^X=\sum_{g\geq 0} \lambda^{2g-2} \sum_{\beta\in H_2(X,\mathbb{Z})} q^\beta \sum_{n\geq 0} \frac{1}{n!} \sum_{\substack{a_1\dots a_n\\k_1\dots k_n}}t^{a_n}_{k_n}\dots t^{a_1}_{k_1} \langle \tau_{k_1}(\gamma_{a_1})\dots \tau_{k_n}(\gamma_{a_n}) \rangle^{X, \bullet}_{g,\beta}.
\end{align}
A set of formal differential operators $\{ L_k \}_{k\geq -1}$ are defined in \cite{pandharipande2003questions}, They are defined in variables $t_k^a$ and only depend upon the intersection pairing $g_{ab}=\int_X\gamma_a\cup \gamma_b$, with $\gamma_a,\gamma_b\in H^\ast(X,\C)$, the Hodge decomposition $\gamma_a\in H^{p_a,q_a}(X,\mathbb{C})$ and the action of $c_1(X)$ on $\{\gamma_a\}$. For the precise form of operators $\{ L_k \}_{k\geq -1}$, one can refer section 4 of \cite{pandharipande2003questions}. Those operators satisfy the Virasoro bracket,
\[[L_k,L_l]=(k-l)L_{k+l}.\]
The Virasoro conjecture in Gromov-Witten theory states as follows:

\begin{conjecture}
For all non-singular projective varieties $X$, $L_k(Z^X)=0$.
\end{conjecture}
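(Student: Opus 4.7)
The plan is to verify the constraint $L_k(Z^X)=0$ in stages, starting with the lowest-order operators where universal identities in Gromov-Witten theory apply, and then extending to higher $k$ by exploiting structural results.

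First I would check $L_{-1}(Z^X)=0$. This should reduce to the string equation, a universal identity arising from the forgetful morphism $\overline{\mathcal{M}}_{g,n+1}(X,\beta)\to\overline{\mathcal{M}}_{g,n}(X,\beta)$ that drops the last marked point, matched against the explicit expression of $L_{-1}$ in terms of $t^a_k$-derivatives. Next, I would establish $L_0(Z^X)=0$ by combining the dilaton equation (which absorbs the $\psi$-class factor), the divisor equation (accounting for the $c_1(X)$-action appearing in $L_0$), and a degree-counting argument capturing the Hodge-grading shift.

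For $k\geq 1$, the approach depends strongly on the geometry of $X$. When the quantum cohomology of $X$ is \emph{semisimple} (for example, Fano toric varieties or projective spaces), I would invoke Givental's symplectic/quantization formalism together with Teleman's classification theorem: the higher-genus descendant potential is obtained from the trivial (point) theory by a quantized Givental group element, the point case being Kontsevich-Witten where Virasoro is known. It then suffices to verify that the relevant quantized action preserves the Virasoro operators, which reduces to genus-zero identities provable through topological recursion relations and the WDVV equations.

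The hard part will be the general case: for non-semisimple quantum cohomology, in particular Calabi-Yau varieties such as K3 surfaces and elliptic curves, neither Teleman reconstruction nor Givental's quantization apply directly. A plausible line of attack is degeneration - embed $X$ in a one-parameter family with a more tractable (semisimple or toric) central fiber, then use the degeneration formula for Gromov-Witten invariants together with a compatibility of the Virasoro operators under degeneration. The deeper obstacle, however, is the absence of \emph{any} higher-genus reconstruction principle outside the semisimple setting, which is precisely why the Virasoro conjecture in this generality remains open; a complete proof would likely require a fundamentally new input, perhaps of the sheaf-theoretic or monodromy-equivariant flavor motivating the rest of this paper.
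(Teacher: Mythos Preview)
The statement you are attempting to prove is labelled in the paper as a \emph{conjecture}, not a theorem: the paper does not offer any proof of it. Immediately after stating it, the paper simply remarks that the conjecture has been established for curves and for nonsingular projective toric varieties, and then moves on to use the toric case (via the GW/PT correspondence) as motivation for the sheaf-theoretic Virasoro constraints that are the paper's actual subject. There is therefore no ``paper's own proof'' against which to compare your proposal.

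Your proposal itself is not a proof but an informed survey of the state of the art: the string/dilaton/divisor arguments for $L_{-1}$ and $L_0$ are standard and correct, and the Givental--Teleman route for semisimple quantum cohomology is the known mechanism behind the toric case. You explicitly and correctly concede that for general $X$ (non-semisimple quantum cohomology, Calabi--Yau targets, etc.) no argument is available and the conjecture remains open. That concession is accurate, but it means your write-up is a discussion of partial results and obstructions rather than a proof of the stated conjecture. In short: there is no gap to identify in the paper's argument because there is none, and your proposal should be read as context rather than as a candidate proof.
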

This conjecture has been proven for curves $C_g$ of genus $g$ and nonsingular projective toric varieties.
\subsection{GW/Pairs correspondence for 3-folds and Virasoro constraints for stable pairs}
The moduli space of stable maps $\overline{\mathcal{M}}_{g,n}(X,\beta)$ is essentially based upon the geometry of curves in $X$, there is another way to approach the moduli of curves in $X$, which is the moduli of stable pairs. 
\begin{definition}
    A stable pair $(F,s)$ on $X$ is a coherent sheaf $F$ on $X$ and a section $s\in H^0(X,F)$ satisfying the following two stability conditions: (1) $F$ is pure of dimension 1, (2) the section $s:\mathcal{O}_X\to F$ has cokernel of dimensional 0.
\end{definition}
Given a stable pair $\mathcal{O}_X\to F$, the kernel of $s$ defines a Cohen-Macaulay subcurve $C\subset X$, i.e. $C$ has no embedded points; the support of the cokernel of $s$ defines a $0$-dimensional subscheme of $C$. To a stable pair, the Euler characteristic and the class of the support $C$ of $F$ is associated:
\[\chi(F)=n\in\mathbb{Z}\quad\text{and} \quad [C]=\beta \in H_2(X,\mathbb{Z}).\]
For a fixed $n$ and $\beta$, there is a projective moduli space of stable pairs $P_n(X,\beta)$, it is non empty only if $\beta$ is an effective curve class. This moduli space is studied in \cite{Pandharipande_2009}, it has been shown that $P_n(X,\beta)$ is a fine moduli space with a universal stable pair $(\mathbb{F},s)$ over $X\times P_n(X,\beta)$. Let $\pi_X$ and $\pi_P$ be the projection to the first and second factor, then one can define the descendent class by:
\begin{align*}
\operatorname{ch}_k(\gamma)=\pi_{P\ast}\left( \operatorname{ch}_k(\mathbb{F}-\mathcal{O}_{X\times P_n(X,\beta)})\cdot \pi_X^\ast \gamma \right).
\end{align*}
The invariant in the stable pair theory has the following form:
\begin{align*}
    \langle \operatorname{ch}_{k_1}(\gamma_1) \dots \operatorname{ch}_{k_m}(\gamma_m) \rangle^{X,\text{PT}}_{n,\beta}:=\int_{\left[ P_n(X,\beta) \right]^\text{vir}}\prod_{i=1}^m \operatorname{ch}_{k_i}(\gamma_i).
\end{align*}
The moduli of stable maps and stable pair are both based upon the geometry of curves in $X$, therefore one may hope there exists some links between those two descendent theories. Indeed, for non-singular projective 3-folds, this correspondence conjecturally holds: the Gromov-Witten and stable pairs descendent series are related after a change of variables.

Let $\mathbb{D}_{\text{PT}}^X$ be the commutative $\mathbb{Q}$-algebra with generators \[\{\operatorname{ch}_i(\gamma)|i\geq 0,\gamma\in H^\ast (X,\Q)\}\] subject to the nature relations
\begin{align*}
\operatorname{ch}_i(\lambda\cdot\gamma)&=\lambda\operatorname{ch}_i(\gamma)\\
\operatorname{ch}_i(\hat{\gamma}+\gamma)&=\operatorname{ch}_i(\hat{\gamma})+\operatorname{ch}_i(\gamma),
\end{align*}
For $\lambda\in \mathbb{Q}$ and $\gamma,\hat{\gamma}\in H^\ast (X)$. Define $\mathbb{D}^X_{\text{GW}}$ similarly as $\mathbb{D}_{\text{PT}}^X$ using generators $\tau_{i}(\gamma)$'s. The GW/PT correspondence is a linear map $\mathfrak{C}^\bullet:\mathbb{D}_{\text{PT}}^{X\bigstar}\to\mathbb{D}^X_{\text{GW}}$, where $\mathbb{D}_{\text{PT}}^{X\bigstar}$ is a subalgebra of $\mathbb{D}_{\text{PT}}^{X}$ called \textit{essential descendants} and the map $\mathfrak{C}^\bullet$ is defined on monomials, for the precise definition of $\mathfrak{C}^\bullet$ and $\mathbb{D}_{\text{PT}}^{X\bigstar}$ one can refer \cite{moreira2020virasoro}. In toric 3-fold case, the precise correspondence statement is:
\begin{theorem}[Theorem 6 of \cite{moreira2020virasoro}]
    Let $X$ be a nonsingular projective toric 3-fold. Let $D\in \mathbb{D}_{\mathrm{PT}}^{X\bigstar}$, $\beta \in H_2(X,\mathbb{Z})$ with $d_\beta=\int_{\beta}c_1(X)$. Then the $\mathrm{GW/PT}$ correspondence holds:
    \[(-q)^{-d_\beta / 2}\left(\sum_{n\in\mathbb{Z}}q^n\left\langle D\right\rangle_{n,\beta}^{X, \mathrm{PT}}\right)=(-i u)^{d_\beta}\left(\sum_{g\in\mathbb{Z}}u^{2g-2}\left\langle\mathfrak{C}^{\bullet}\left(D\right)\right\rangle_{g,\beta}^{X, \mathrm{GW}}\right)\]
    after the change of variable $-q=e^{iu}$. 
\end{theorem}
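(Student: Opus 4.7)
The plan is to exploit the toric structure of $X$ via $\mathbb{T} = (\mathbb{C}^*)^3$-equivariant virtual localization. First I would put the induced torus action on both $P_n(X,\beta)$ and $\overline{\mathcal{M}}_{g,n}(X,\beta)$ and identify the fixed loci combinatorially: on the PT side as triples of $3$D partitions at each toric vertex glued via asymptotic $2$D partitions along each toric edge, and on the GW side as stable maps factoring through the toric $1$-skeleton, parametrised by branching data along each edge. By the Graber-Pandharipande virtual localization formula, both descendent series reorganise into sums over these combinatorial data, weighted by local integrals, so the desired equality reduces to a matching of vertex and edge contributions while keeping track of the $(-q)^{-d_\beta/2}$ and $(-iu)^{d_\beta}$ prefactors.

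Second, I would establish the vertex correspondence. On the open chart $\mathbb{C}^3$ one formulates an equivariant local GW/PT statement with descendents: the PT vertex paired with an essential descendent $D$ should agree, after the change of variable $-q = e^{iu}$, with the GW vertex paired with $\mathfrak{C}^{\bullet}(D)$. The essentiality condition and the precise form of $\mathfrak{C}^{\bullet}$ are tailored exactly so that this local identity has a clean closed form; one verifies it by matching the explicit plane-partition sum defining the equivariant PT vertex against the topological vertex formula on the GW side, retaining all three equivariant parameters.

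Third, I would treat the edge contributions through a rubber GW/PT correspondence with descendents. Along each toric $1$-cell the relevant fixed loci fiber over rubber moduli of maps (resp.\ stable pairs) targeting a chain of bubbled $\mathbb{P}^1$'s; the rubber correspondence compatible with $\mathfrak{C}^{\bullet}$ is proved by degenerating the rubber target and inducting on the relative boundary conditions. Gluing the vertex and edge correspondences along the moment polytope, with care taken to distribute the curve-class-dependent prefactors additively over vertices and edges, then reproduces the claimed global identity on $X$.

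The main obstacle is the equivariant vertex correspondence for essential descendents. The difficulty is combinatorial and bookkeeping-heavy rather than conceptual: one must match two a priori very different generating series in the equivariant parameters while simultaneously controlling how $\mathfrak{C}^{\bullet}$ interacts with the Chern character insertions at the vertex level. The definition of the subalgebra $\mathbb{D}_{\mathrm{PT}}^{X\bigstar}$ and of the operator $\mathfrak{C}^{\bullet}$ are dictated precisely by those descendent insertions for which such a closed-form vertex matching exists, and a substantial part of the argument is isolating this clean form and proving it.
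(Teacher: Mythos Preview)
The paper does not prove this theorem. It appears in the introduction purely as a quoted background result from \cite{moreira2020virasoro}, used to explain how Virasoro constraints were transported from Gromov--Witten theory to stable pairs and eventually to Hilbert schemes of points. No argument for it is given or even sketched here, so there is no proof in this paper to compare your proposal against.

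As for your outline on its own merits: the overall architecture you describe---equivariant virtual localization on both sides, reducing to a vertex/edge matching on the toric $1$-skeleton, and handling edges via a relative/rubber correspondence---is indeed the framework in which the GW/PT descendent correspondence is established in the literature (Pandharipande--Pixton, Oblomkov--Okounkov--Pandharipande, and the paper \cite{moreira2020virasoro} you are citing). However, two of your steps are stated far too optimistically. The equivariant vertex correspondence with descendents is not obtained by a direct closed-form combinatorial match of the plane-partition PT vertex against the topological vertex; rather, the actual proofs proceed through \emph{capped} descendent vertices, a rationality statement, and a delicate pole-cancellation/divisibility argument in the equivariant parameters. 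Likewise, the rubber/edge step is not a short induction but relies on the full relative GW/PT machinery. So your proposal is a correct high-altitude map of the terrain, but the sentences ``one verifies it by matching the explicit plane-partition sum \dots against the topological vertex formula'' and ``the rubber correspondence \dots is proved by degenerating the rubber target and inducting'' each compress entire papers into a clause, and the first of them misidentifies the mechanism of proof.
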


In \cite{moreira2020virasoro}, Virasoro constraints on moduli of stable pairs are proven using the Virasoro constraints for the Gromov-Witten theory of toric 3-folds and the above GW/PT correspondence. The Virasoro operators $\mathcal{L}^{\text{PT}}_k$ with $k\geq -1$ are defined as a operators on $\mathbb{D}_{\text{PT}}^{X}$. For the precise definition one can refer \cite{moreira2020virasoro}. The Virasoro constraints states as follows:
\begin{conjecture}[\cite{moreira2020virasoro}]
    Let $X$ be a nonsingular projective 3-fold with only $(p, p)$-cohomology, and let $\beta \in H_2(X, \mathbb{Z})$. For all $k \geqslant-1$ and $D \in \mathbb{D}_{\mathrm{PT}}^X$, we have
$$
\left\langle\mathcal{L}_k^{\mathrm{PT}}(D)\right\rangle_\beta^{X, \mathrm{PT}}=0,
$$
where 
$$
\left\langle D\right\rangle_\beta^{X, \mathrm{PT}}:=\sum_{n \in \mathbb{Z}} q^n \int_{\left[P_n(X, \beta)\right]^{v i r}} D.
$$
\end{conjecture}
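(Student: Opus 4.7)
The strategy is to use the GW/PT correspondence of Theorem 6 as a bridge, transferring the already-known GW Virasoro constraints to the stable pairs side. Since Theorem 6 is proven only for nonsingular projective toric 3-folds (where the GW Virasoro conjecture is also known), I would first attack the toric case of the conjecture and then attempt to extend to general $(p,p)$-cohomology 3-folds via a degeneration argument.

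The heart of the toric case is an intertwining relation of the form
\[\mathfrak{C}^\bullet\bigl(\mathcal{L}_k^{\mathrm{PT}}(D)\bigr) \;=\; \mathsf{L}_k\bigl(\mathfrak{C}^\bullet(D)\bigr), \qquad D \in \mathbb{D}_{\mathrm{PT}}^{X\bigstar},\]
where $\mathsf{L}_k$ is obtained from the Pandharipande GW Virasoro operator $L_k$ by absorbing the prefactors $(-q)^{-d_\beta/2}$ and $(-iu)^{d_\beta}$ together with the variable change $-q = e^{iu}$ dictated by Theorem 6. I would establish this identity by first computing both sides on a single generator $\operatorname{ch}_i(\gamma)$ and then extending by the Leibniz rule, matching the Bernoulli-number description of $\mathfrak{C}^\bullet$ on monomials against the explicit form of $\mathcal{L}_k^{\mathrm{PT}}$ and $L_k$ — both second-order differential operators whose coefficients involve only the pairing $g_{ab}$, the Hodge bigrading $(p_a,q_a)$, and the action of $c_1(X)$ on $H^\ast(X,\C)$. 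Granting this intertwining, the GW Virasoro constraint $L_k(Z^{X,\mathrm{GW}}) = 0$ (known for toric $X$) translates via Theorem 6 into $\langle \mathcal{L}_k^{\mathrm{PT}}(D)\rangle_\beta^{X,\mathrm{PT}} = 0$ for every essential $D$; the extension to arbitrary $D \in \mathbb{D}_{\mathrm{PT}}^X$ then follows from the universal relations cutting out the essential subalgebra $\mathbb{D}_{\mathrm{PT}}^{X\bigstar}$. For a nontoric $X$ with only $(p,p)$-cohomology, I would try a degeneration of $X$ to a normal-crossing union of toric-like components, combine the PT degeneration formula with the toric case, and use the $(p,p)$-hypothesis to keep the Hodge-grading pieces of $\mathcal{L}_k^{\mathrm{PT}}$ under control along the degeneration.

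The hardest step is the intertwining identity above: it is a nontrivial equality between two second-order differential operators whose coefficients mix the Bernoulli generating functions hidden in $\mathfrak{C}^\bullet$ with the conformal-weight data (Hodge grading and $c_1(X)$-action) that enter the Virasoro operators. Extracting it requires a careful unwinding of how $\mathfrak{C}^\bullet$ interacts with the derivations $\partial/\partial t_k^a$ and the multiplications by $t_k^a$ appearing in $L_k$, and this combinatorial matching is where the real work sits. The non-toric extension is a secondary obstacle, since the PT degeneration formula does not manifestly commute with $\mathcal{L}_k^{\mathrm{PT}}$, and propagating the Virasoro constraint through the degeneration appears delicate.
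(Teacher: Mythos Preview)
The statement you are attempting to prove is labeled \emph{Conjecture} in the paper, not Theorem. The paper does not supply a proof; it only records that the special case of \emph{stationary} descendants on nonsingular projective \emph{toric} 3-folds is established in the cited reference \cite{moreira2020virasoro}. So there is no ``paper's own proof'' to compare against, and what you have written is a proposed attack on an open problem.

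On the substance of your proposal: your first step --- pushing the known GW Virasoro constraints through the GW/PT transformation $\mathfrak{C}^\bullet$ via an intertwining relation --- is indeed the mechanism used in \cite{moreira2020virasoro} to obtain the toric stationary case. However, you overstate what this yields. Theorem~6 (the GW/PT correspondence quoted here) is only available for the essential subalgebra $\mathbb{D}_{\mathrm{PT}}^{X\bigstar}$, and the intertwining is delicate enough that even in the toric case only the stationary part of the conjecture has been extracted. Your sentence ``the extension to arbitrary $D \in \mathbb{D}_{\mathrm{PT}}^X$ then follows from the universal relations cutting out the essential subalgebra'' is exactly the kind of step that is \emph{not} known to go through cleanly; if it did, the full toric case would already be a theorem rather than a conjecture.

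Your second step --- degenerating a general $(p,p)$-cohomology 3-fold to a union of toric pieces and transporting the constraint via the PT degeneration formula --- is a natural idea, but as you yourself note, there is no reason the operators $\mathcal{L}_k^{\mathrm{PT}}$ should commute with the degeneration/gluing in any straightforward way. This is genuinely open and your sketch does not contain a new idea for overcoming it. In short: the outline is reasonable as a research direction and mirrors the known partial results, but it does not constitute a proof, and the paper does not claim one.
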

The statement of this conjecture about stationary descendants for non-singular projective toric 3-folds is proven in \cite{moreira2020virasoro}. A special case is when $X=S\times \mathbb{P}^1$ where $S$ is a smooth projective toric surface, then the Virasoro constraints for this toric 3-fold $X$ are:
\[\forall k \geqslant-1, \quad\left\langle\mathcal{L}_k^{\mathrm{PT}} \prod_{i=1}^r \mathrm{ch}_{m_i}\left(\gamma_i \times \boldsymbol{\operatorname{p}}\right)\right\rangle_{n\left[\mathbb{P}^1\right]}^{X, \mathrm{PT}}=0,\]
where $\gamma_i\in H^\ast(S)$, $\boldsymbol{\operatorname{p}}\in H^2(\mathbb{P}^1)$ is the point class and $[\mathbb{P}^1]\in H_2(X)$ is the fiber class. Specializing to the space $P_n(S\times \mathbb{P}^1,n[\mathbb{P}^1])\cong \operatorname{Hilb}^n(S) $, one gets a new set of Virasoro constraints for tautological classes on $\operatorname{Hilb}^n(S)$ for toric surfaces. In \cite{Moreira_2022}, this constraint is proven for simply connected surfaces, I will recall the full form of the Virasoro operators in this Hilbert scheme of points setting in the next section. 

The Hilbert scheme of points can be viewed as a moduli of stable sheaves of rank $1$, therefore one may expect that there is a Virasoro constraint for the moduli of stable sheaves. In \cite{vanbree2021virasoro}, D. van Bree has proposed a Virasoro conjecture for the moduli space of Gieseker semistable sheaves for surfaces with only $(p,p)$-cohomology. In \cite{bojko2022virasoro}, Bojko, Lim and Moreira have proved the Virasoro constraint for moduli spaces of stable torsion-free sheaves on any curve and on surfaces with only $(p,p)$-cohomology classes, in particular the conjecture of D. van Bree is proven.

The following conjecture generalizes Moreira’s Virasoro constraints on Hilbert schemes of points on simply connected surfaces\cite{Moreira_2022} and van Bree’s Virasoro constraints on moduli of sheaves on surfaces with only $(p,p)$ cohomology\cite{vanbree2021virasoro}. It is (implicitly) conjectured in Section 2.8 of \cite{bojko2022virasoro}. I will provide a proof for this conjecture in the case of K3 surface. 

\begin{conjecture}
    Let $S$ be a smooth projective simply connected surface over $\C$ and let $H$ be a fixed polarisation. Choose numbers $r>0$ and $c_2$ and a line bundle $L$. Let $M=M_S^H(r,L,c_2)$ be the moduli space of Gieseker semistable sheaves of rank $r$, with determinant $L$ and second Chern class $c_2$. Assume that all semistable sheaves are also stable and $M$ has a (twisted) universal sheaf $\mathcal{F}$. Let $\mathbb{D}^S$ be the \textit{holomorphic descendents} defined in the section \ref{sectionVirasoro}. Let $\xi :\mathbb{D}^S\to H^\ast(M,\C)$ be the geometric realization defined in \eqref{geomreal} and let $\mathcal{L}_k, k\geq -1$ be the Virasoro operators defined in \eqref{kpositive}. Then $\forall k\geq -1$ and $\forall D\in \mathbb{D}^S$, we have
    $$\int_{[M]^\mathrm{vir}}  \xi_{\mathcal{F}\otimes(\operatorname{det}\mathcal{F})^{-1/r}}\left(\mathcal{L}_k D \right)=0.\footnote{The geometric realization using $\mathcal{F}\otimes(\operatorname{det}\mathcal{F})^{-1/r}$ is equivalent to the geometric realization using $\boldsymbol{\operatorname{p}}$-normalized sheaf in \cite{bojko2022virasoro} by Remark 2.17 in the same paper.}$$
\end{conjecture}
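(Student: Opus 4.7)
The plan is to extend the Bojko-Moreira-Lim wall-crossing strategy — which proves the Virasoro constraints for surfaces with only $(p,p)$-cohomology — to the more general setting of arbitrary simply connected surfaces, using Moreira's rank-$1$ Virasoro constraints on Hilbert schemes $S^{[n]}$ (valid for any simply connected $S$, with no $(p,p)$-hypothesis) as the inductive base. The restriction to \emph{holomorphic descendents} $\mathbb{D}^S$ and the twist of $\mathcal{F}$ by $(\det\mathcal{F})^{-1/r}$ appearing in the conjecture should be exactly the data needed to make the wall-crossing step go through without the $(p,p)$-hypothesis.

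First I would lift the geometric realisation $\xi_{\mathcal{F}\otimes(\det\mathcal{F})^{-1/r}}$ from the moduli space $M$ to the moduli stack $\curly M$ of all coherent sheaves on $S$. The $(\det\mathcal{F})^{-1/r}$-twist makes the insertion classes invariant under the scalar $B\mathbb{G}_m$-action, so they descend to the rigidified stack $\curly M^{\mathrm{rig}}$. I would then define a stack-level Virasoro operator $\mathcal{L}_k^{\curly M}$ on $H^*(\curly M^{\mathrm{rig}},\C)$ refining the $\mathcal{L}_k$ of \eqref{kpositive}, and prove the key structural fact that the subspace of classes $\alpha\in H_*(\curly M^{\mathrm{rig}})$ which pair to zero against $\xi(\mathcal{L}_k D)$ for every $k\geq -1$ and every $D\in\mathbb{D}^S$ is closed under Joyce's vertex/Lie algebra bracket on $H_*(\curly M^{\mathrm{rig}})$.

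With this closure property in hand, Joyce's universal wall-crossing formula expresses $[M_S^H(r,L,c_2)]^{\vir}$ as an iterated Lie bracket of rank-$1$ classes, supported on twisted Hilbert schemes of points on $S$. Because the closure property propagates the Virasoro vanishing through every bracket, the conjecture reduces to the rank-$1$ case, which is exactly Moreira's theorem for simply connected surfaces. The vanishing then follows.

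The main obstacle is the closure property for non-$(p,p)$ cohomology. In the Bojko-Moreira-Lim argument, the $(p,p)$-hypothesis is used precisely to control the commutator of $\mathcal{L}_k^{\curly M}$ with Joyce's translation operator $T$ and coproduct $\Delta$; without it, additional anomalous terms indexed by classes in $H^{p,q}(S)$ with $p\neq q$ appear in the commutator. The crux is to show that on holomorphic descendents and after the determinant twist, these anomalous contributions either vanish identically or reassemble into $\mathcal{L}_j^{\curly M}$-images for some $j\geq -1$ and so are killed by the induction hypothesis. A careful Hodge-graded analysis of the Atiyah class of the universal sheaf on $\curly M$ — tracking how its non-$(p,p)$ components propagate through the definition of $\mathcal{L}_k$ — will be required, and this is where I would expect the bulk of the technical work to lie; indeed, the very reason the K3 case admits the elegant Markman-monodromy shortcut used elsewhere in the paper is precisely that this general Hodge-graded wall-crossing compatibility has not yet been established.
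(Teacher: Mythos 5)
First, note that the statement you are proving is stated in the paper as a \emph{conjecture}: the paper itself does not prove it for general simply connected surfaces, but only for K3 surfaces (Theorem \ref{mainthm}), and it does so by a completely different mechanism — Markman's monodromy operators and Oberdieck's universality theorem, which convert any descendent integral over $M_H(v)$ into a rescaled descendent integral over a Hilbert scheme $S^{[N]}$, after which one checks by hand that $R_k$, $T_k$, $S_k$ each pick up the correct power of $r$ under the rescaling and invokes Moreira's rank-one result. Your proposal instead follows the Bojko--Lim--Moreira wall-crossing strategy on the stack of all coherent sheaves. That is a legitimate and genuinely different route in principle, but as written it is not a proof: the entire content of the argument is concentrated in the ``closure property'' — that the annihilator of $\{\xi(\mathcal{L}_k D)\}$ in $H_*(\mathcal{M}^{\mathrm{rig}})$ is a Lie subalgebra for Joyce's bracket — and you do not establish it. You correctly identify that in Bojko--Lim--Moreira this closure is proved by realizing the $\mathcal{L}_k$ as the positive half of a conformal structure on a \emph{lattice} vertex algebra built from the $(p,p)$-cohomology, and that for a surface with $H^{2,0}(S)\neq 0$ the operator $S_k$ acquires the extra term $-\frac{(k+1)!}{r}R_{-1}[\sigma]\bigl(\operatorname{ch}^{\mathrm{H}}_{k+1}(\overline{\sigma})\,\cdot\,\bigr)$ indexed by $\sigma\in H^{0,2}(S)$, which has no counterpart in their framework. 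Your own closing sentence concedes that the compatibility of these anomalous terms with the bracket ``has not yet been established''; this is precisely the missing step, not a technicality that can be deferred.

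Two further points would need attention even if the closure property were granted. First, Joyce's wall-crossing formula expresses $[M]^{\mathrm{vir}}$ as iterated brackets of rank-one classes only under hypotheses on the obstruction theory; for a simply connected surface with $p_g>0$ the trace-free obstruction space $\operatorname{Ext}^2(F,F)_0$ is generally nonzero and the virtual class interacts with the semiregularity map, so the reduction to twisted Hilbert schemes is not automatic (the K3 case is special exactly because $\operatorname{Ext}^2(F,F)_0=0$ and $M$ is smooth of expected dimension). Second, the base case you invoke — Moreira's theorem on $S^{[n]}$ — is proved for the geometric realization $\xi_{\mathcal{I}_{\mathcal{Z}}}$, and one must match it with the normalized realizations produced by the wall-crossing induction. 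By contrast, the paper's K3 argument sidesteps all of this: the universality statement (Theorem \ref{theoremUnivers}) reduces the comparison with $S^{[N]}$ to a finite check of Mukai pairings (Lemma \ref{claim}), at the cost of working only on K3 surfaces where the monodromy operators exist. In short, your outline points at a plausible program, but the step that would constitute the proof is exactly the step you leave open.
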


In Theorem \ref{mainthm}, the above conjecture is proven for $S$ being a K3 surface. The idea of the proof is to use the Markman operator defined in \cite{markman2005monodromy} which relates cohomology rings of two moduli spaces of stable sheaves on K3 surfaces, more precisely: given an isometry of Mukai lattice $g:H^\ast(S_1,\C)\to H^\ast(S_2,\C)$ of two K3 surfaces $S_1,S_2$, Markman defines an operator $\gamma(g):H^\ast(M_1(v),\C)\to H^\ast(M_2(g(v)),\C)$, where $M_i(v)$ is the moduli of stable sheaves on $S_i$ with Mukai vector $v$ with corresponding polarizations. It turns out that the Markman operator $\gamma(g)$ is an isometry with respect to Poincaré pairing which allows us to equal integrals on both moduli spaces. In \cite{Oberdieck_2022}, Oberdieck proves an universality result about descendent integrals over moduli space of stable sheaves on K3 surfaces using the Markman operator. In this paper, I will use this universality result to transform the descendent integrals on moduli of stable sheaves to descendent integrals on Hilbert schemes of points where the Virasoro conjecture is proven. 

In section \ref{rankzero}, the rank zero case is considered and a modified version of the Virasoro constraints will be proposed and proven in proposition \ref{rankzeroprop}. In this rank zero case, a new $S_k$ operator will be constructed and one need to use $Y$-normalised universal sheaf (in the sens of \cite{bojko2022virasoro}) for the geometric realization for a $Y\in H^{1,1}(S)$.

I will also propose a set of negative Virasoro operators on K3 surface. Those negative Virasoro operators combined with the existing $L_{k\geq -1}$ operators will satisfy the Virasoro algebra with central charge $e(S)$. This is compatible with the result in \cite{bojko2022virasoro} for $p_g=0$ surfaces. In the Gromov-Witten side, the Virasoro operators in \cite{Hori:2003ic} also give the same conformal weight. 

In section 2, I will recall the Virasoro constraints of Moreira and van Bree and formulate the Virasoro constraints on K3 surfaces. In section 3, I will recall the Markman operator following G.Oberdieck's expositions \cite{Oberdieck_2022}. In section 4, I will prove the Virasoro constraint on moduli of positive rank stable sheaves on K3 surfaces. In section 5, I will consider the rank 0 case. In section 6, I will show that the set of negative Virasoro operators on K3 surface that I proposed satisfy the Virasoro algebra with central charge $e(S)$.
\subsection{Acknowledgement}
The author would like to thank his supervisor Martijn Kool for many useful suggestions. He is also grateful to Woonam Lim for useful conversations. The author is supported by the ERC Consolidator Grant FourSurf 101087365.

\section{Virasoro constraints}
\label{sectionVirasoro}
Let $S$ be a non-singular and projective K3 surface. There is a bilinear form, called $\textit{Mukai pairing}$, on $\Lambda:= H^\ast(S,\mathbb{Z})$ defined as:
\begin{align*}
    (x,y):=-\int_S x^\vee y, 
\end{align*}
where $\bullet^\vee$ is defined as follows: if one writes $x = (r,D,n)$ as the decomposition of degree then $x^\vee = (r,-D,n)$. This pairing is symmetric, unimodular, of signature $(4,20)$ and the resulting lattice is called the $\textit{Mukai lattice}$. For $x=(r,D,n)\in \Lambda$, I will also write
\begin{align}
\label{notation}
    \operatorname{rk}(x)=r,\quad c_1(x)=D,\quad v_2(x)=n.
\end{align}

For a coherent sheaf $\mathcal{F}$ on $S$, its Mukai vector is defined to be $v=\operatorname{ch}(\mathcal{F})\cdot \sqrt{\operatorname{td}_S}$.
Let $v\in\Lambda$ and an ample line bundle $H$ be chosen such that $M:=M_H(v)$, the moduli space of $H$-semistable sheaves on $S$ with Mukai vector $v$, does not contain any strictly semistable sheaves and is a smooth projective and admits a (twisted) universal sheaf.
For example, one could choose $v\in H^{1,1}(S)$, $v\neq0$ and $(v,v)\geq -2$ and $v$ is not a multiple of a class by an integer larger than $1$, those choices of $v$ are called \textit{effective} and \textit{primitive} in the sense of \cite{markman2005monodromy}. Also, for such $v$, there always exists an ample line bundle $H$ on $S$ such that $M_H(v)$ has the above mentioned properties \cite{markman2005monodromy}. 

\begin{comment}
    an possible  and primitive vector. I also assume an ample divisor $H$ on $S$ which satisfies the assumption of the Theorem 6.2.5 of \cite{huybrechts_lehn_2010} and let $M:=M_H(v)$ be the moduli space of $H$-stable sheaves with Mukai vector $v$. Then $M_H(v)$ is non empty, smooth, connected, projective, and holomorphic symplectic of dimension $2+(v,v)$, and semi-stability is equivalent to stability. I also assume there exists a universal sheaf $\mathcal{F}$ on $M_H(v)\times S$.\textcolor{blue}{For paper: maybe spell out what you require from [2] and [3].} \textcolor{blue}{Do you need existence of universal sheaf? Discuss.} 
\end{comment}

Let $\mathbb{D}^S$ be the commutative algebra generated by symbols called \textit{holomorphic descendents} of the form:
\[\ch{i}{\gamma}\quad\text{ for }i\geq 0, \gamma\in H^\ast(S,\C)\] subject to the linearity relations\[\ch{i}{\lambda_1\gamma_1+\lambda_2\gamma_2}=\lambda_1\ch{i}{\gamma_1}+\lambda_2\ch{i}{\gamma_2}\] for $\lambda_1,\lambda_2\in \mathbb{C}$. I also write $\ch{\bullet}{\gamma}$ for the element 
\[\ch{\bullet}{\gamma} =\sum_{i\geq 0} \ch{i}{\gamma} \in \mathbb{D}^S.\footnote{The elements in $\mathbb{D}^S$ can only allow finite sums of descendants, here we may view this infinite sum lives in a completion of $\mathbb{D}^S$. }\]

Consider the moduli space $M:=M_H(v)$ with $r:=\operatorname{rk}(v)\geq 1$ and the product $M\times S$, let $\pi_M$ and $\pi_S$ be the projection from $M\times S$ to $M$ and $S$. Let $\mathcal{F}$ be a universal sheaf or a twisted universal sheaf. Let me recall the definition in \cite{bojko2022virasoro} of the geometric realization with respect to $\mathcal{F}$ on $M\times S$ as the algebra homomorphism 
\begin{align}
\label{geomreal}
    \xi_{\mathcal{F}}:\mathbb{D}^S \to H^\ast(M),
\end{align}
which acts on generators $\ch{i}{\gamma}$ with $\gamma\in H^{p,q}(S)$ as 
\[\xi_{\mathcal{F}}\left( \ch{i}{\gamma} \right)=\pi_{M\ast}\left( \operatorname{ch}_{i+\operatorname{dim}(S)-p}(\mathcal{F}) \pi_S^\ast \gamma \right)\]

Next, I will define the Virasoro operators for K3 surfaces. I combined the form of Virasoro operators in \cite{Moreira_2022} and in \cite{vanbree2021virasoro}. For $k\geq -1$, I define operators $R_k$, $T_k$, $S_k$ on $\mathbb{D}^S$ as follows:

$\bullet$ The operator $R_k:\mathbb{D}^S\to\mathbb{D}^S$ is defined as a derivation by fixing its action on the generators: given $\gamma\in H^{p,q}(S)$, 
\begin{align*}
    R_k (\operatorname{ch}_i^\mathrm{H}(\gamma))&=\left(\prod_{j=0}^k(i+j)\right)\operatorname{ch}_{i+k}^\mathrm{H}(\gamma)
\end{align*}
I take the following conventions: the above product is $1$ if $k=-1$ and $\ch{i+k}{\gamma}=0$ if $i+k<0$.

$\bullet$ The operator $T_k:\mathbb{D}^S\to\mathbb{D}^S$ is the operator of multiplication by a fixed element of $\mathbb{D}^S$:
\begin{align*}
    T_k&=\sum_{i+j=k}(-1)^{\operatorname{dim}(S)-p^L}i!j!\operatorname{ch}_{i}^\mathrm{H}\operatorname{ch}_{j}^\mathrm{H}(\text{td}_S)
\end{align*}
where 
$(-1)^{\operatorname{dim}(S)-p^L}i!j!\operatorname{ch}_{i}^\mathrm{H}\operatorname{ch}_{j}^\mathrm{H}(\text{td}_S)$ \footnote{In \cite{bojko2022virasoro}, $T_k$ operators are also defined for curves, in this case $\operatorname{dim}(S)$ should be replaced by the dimension of the curve.} is defined as follows: let $\Delta:S\to S\times S$ be the diagonal map and let 
\[\sum_{t}\gamma_t^L\otimes \gamma_t^R = \Delta_\ast\operatorname{td}_S\]
be the K\"unneth decomposition of $\Delta_\ast\operatorname{td}_S$ such that $\gamma^L_t\in H^{p^L_t,q^L_t}(S)$ for some $p^L_t,q^L_t$, Then 
\[(-1)^{\operatorname{dim}(S)-p^L}i!j!\operatorname{ch}_{i}^\mathrm{H}\operatorname{ch}_{j}^\mathrm{H}(\text{td}_S)=\sum_t(-1)^{\operatorname{dim}(S)-p^L_t}i!j!\operatorname{ch}_{i}^\mathrm{H}(\gamma_t^L)\operatorname{ch}_{j}^\mathrm{H}(\gamma_t^R).\]

$\bullet$ For $\alpha\in H^\ast(S)$, define the derivation $R_{-1}[\alpha]$ by $R_{-1}[\alpha](\ch{i}{\gamma})=\ch{i-1}{\alpha\gamma}$. For $k\geq -1$, $S_k:\mathbb{D}^S\to\mathbb{D}^S$ is defined 
\begin{align}
    S_k (D) = -\frac{(k+1)!}{r}\sum_{\{i | p^L_i=0\}}R_{-1}[\delta^L_i]\left(\operatorname{ch}_{k+1}^\mathrm{H}(\delta^R_i)D\right),\quad\forall D\in\mathbb{D}^S,
\end{align}
where $r$ is the rank of the sheaves that the moduli $M$ parameterize and the sum runs over the terms $\delta_i^L\otimes \delta_i^R$ of the K\"unneth decomposition of $\Delta_\ast \mathbbm{1}\in H^\ast(S\times S)$ such that $p_i^L=0$.

Finally define operators $L_k$ and $\mathcal{L}_k$ for $k\geq-1$ as:
\begin{align}
\begin{split}
\label{kpositive}
    L_k&=R_k+T_k\\
    \mathcal{L}_k&=R_k+T_k+S_k
    \end{split}
\end{align}

\begin{remark}
    Denote by $S^{[n]}$ the Hilbert scheme of points on $S$ parameterizing $0$ dimensional subschemes of $S$ with length $n$. Let $\mathcal{I_\mathcal{Z}}$ be the ideal sheaf of the universal subscheme; equivalently one can write $\mathcal{I_\mathcal{Z}}=\mathcal{O}_{S^{[N]}\times S}-\mathcal{O}_{{\mathcal{Z}}}$. When viewing $S^{[n]}$ as a moduli of rank one sheaves, the universal sheaf $\mathcal{F}$ is $\mathcal{I_\mathcal{Z}}$, in this case one has $\operatorname{ch}(\mathcal{F}\otimes(\operatorname{det}\mathcal{F})^{-1})=\operatorname{ch}(\mathcal{F})$. Therefore, in the case of $S^{[n]}$, the above defined Virasoro operators become the Virasoro operators of Moreira in \cite{Moreira_2022}: in \cite{Moreira_2022}, the geometric realization $\xi_{I_\mathcal{Z}}$ is used. If one considers a surface $S$ with only $(p,p)$ cohomology and a moduli space $M$ of Gieseker semistable sheaves of of rank $r\geq 1$ as the case in \cite{vanbree2021virasoro}, then the above defined Virasoro operators become the Virasoro operators of van Bree \cite{vanbree2021virasoro}.
\end{remark}

One of the main result of this paper is:
\begin{theorem}
\label{mainthm}
    Let $S$ be a non-singular and projective K3 surface. Let $D\in\mathbb{D}^S$. Let $M_H(v)$ be a moduli of sheaves on $S$ as above with $r:=\operatorname{rk}(v)\geq 1$, and let $\mathcal{F}$ be a universal sheaf. Define $\overline{\mathcal{F}}:=\mathcal{F}\otimes(\operatorname{det}\mathcal{F})^{-1/r}$ as an element of the rational K-theory of $M_H(v)\times S$\footnote{ $(\operatorname{det}\mathcal{F})^{-1/r}$ might not exist as a line bundle, also the geometric realization is well define is this case, since it only involves Chern characters} ($\overline{\mathcal{F}}$ is independent of the choice of the universal sheaf $\mathcal{F}$ as in \cite{vanbree2021virasoro}).  Then for $k\geq -1$, we have:
    $$\int_{M_H(v)}  \xi_{\overline{\mathcal{F}}}\left(\mathcal{L}_k D \right)=0.$$
\end{theorem}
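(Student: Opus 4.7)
The plan is to reduce the assertion to the rank-one case, which is Moreira's Virasoro constraint for Hilbert schemes of points on simply connected surfaces. Every K3 is simply connected, so this input is available for $S^{[N]}$ with $N$ chosen so that $(v,v)=2N-2$. The bridge between the two moduli spaces is Markman's monodromy operator, which produces a cohomological isomorphism $\phi:H^*(S^{[N]})\to H^*(M_H(v))$ that is compatible with the normalized universal Chern classes on either side.

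First I would set up the comparison at the level of descendents. Markman's operator arises from parallel transport in a deformation between $S^{[N]}$ and $M_H(v)$, and induces an isometry $g$ of the Mukai lattice $\Lambda$ carrying $(1,0,1-N)$ to $v$. This isometry propagates to an algebra automorphism $\Phi:\mathbb{D}^S\to\mathbb{D}^S$ characterised by
$$\phi\bigl(\xi_{\mathcal{I}_{\mathcal{Z}}}(D)\bigr)=\xi_{\overline{\mathcal{F}}}\bigl(\Phi(D)\bigr),$$
where $\Phi$ acts on a generator $\operatorname{ch}^{\mathrm{H}}_i(\gamma)$ by twisting $\gamma$ via $g$ together with a controlled correction that accounts for the rank shift from $1$ to $r$. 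Using the \emph{normalized} sheaf $\overline{\mathcal{F}}$ rather than $\mathcal{F}$ is essential here: it is intrinsic to the Mukai vector and is precisely what Markman's operator matches on the two sides. Integrating then gives
$$\int_{M_H(v)}\xi_{\overline{\mathcal{F}}}(D)\;=\;\int_{S^{[N]}}\xi_{\mathcal{I}_{\mathcal{Z}}}\bigl(\Phi^{-1}D\bigr).$$

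The crux, and the main obstacle, is to prove the intertwining
$$\Phi\circ\mathcal{L}_k^{(1)}\;=\;\mathcal{L}_k^{(r)}\circ\Phi,\qquad k\geq -1,$$
where the superscripts record the rank entering in the definition of $S_k$. The derivation $R_k$ respects $\Phi$ because $\Phi$ preserves the $i$-grading and commutes with multiplication; the operator $T_k$ is multiplication by an element built from $\operatorname{td}_S$, whose behavior under $g$ matches the transformation of the K\"unneth decomposition of $\Delta_\ast\operatorname{td}_S$; and the rank-dependent term $S_k$ is designed so that, when combined with the normalization $\overline{\mathcal{F}}$, its $1/r$ factor precisely absorbs the rank discrepancy between the two moduli spaces. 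I would verify this intertwining by decomposing $g$ into the standard generators of the monodromy group supplied by Markman--Oberdieck, namely line-bundle twists and spherical reflections in $\operatorname{ch}(\mathcal{O}_S)$, and checking each case separately.

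Once the intertwining is established, Moreira's rank-one Virasoro constraint gives
$$\int_{S^{[N]}}\xi_{\mathcal{I}_{\mathcal{Z}}}\bigl(\mathcal{L}_k^{(1)}\Phi^{-1}D\bigr)\;=\;0,$$
which, after transporting back through $\phi$ by the intertwining, becomes the desired $\int_{M_H(v)}\xi_{\overline{\mathcal{F}}}(\mathcal{L}_k D)=0$. The hardest technical piece is the explicit form of $\Phi$ on descendents and, tied to it, the compatibility of $S_k$ with $\Phi$: this is where the rank normalization built into $\overline{\mathcal{F}}$ and the $1/r$ in $S_k$ must conspire. Everything else (the rank-zero modification discussed in the paper's Section~\ref{rankzero} and the definition of $\overline{\mathcal{F}}$ as an element of rational $K$-theory) is there precisely to make this conspiracy hold.
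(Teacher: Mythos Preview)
Your overall strategy---reduce to Moreira's rank-one theorem on $S^{[N]}$ via Markman's monodromy---is the same as the paper's. But the execution differs in ways that matter.

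The paper does \emph{not} construct an algebra automorphism $\Phi:\mathbb{D}^S\to\mathbb{D}^S$ abstractly from an isometry $g$ and then verify $\Phi\circ\mathcal{L}_k^{(1)}=\mathcal{L}_k^{(r)}\circ\Phi$ by decomposing $g$ into line-bundle twists and spherical reflections. Instead it invokes Oberdieck's universality (Theorem~\ref{theoremUnivers}): descendent integrals depend only on the Mukai intersection matrix of the classes appearing in the $B$-arguments of \eqref{Bform}. The crucial trick is to replace the generator $\operatorname{ch}^{\mathrm H}_i(\mathbbm 1)$ by $\operatorname{ch}^{\mathrm H}_i(\mathbbm 1+\boldsymbol{\mathrm p})$; with this basis, Lemma~\ref{claim} says that the integral on $M(v)$ equals the integral on $S^{[N]}$ after the simple rescaling $\operatorname{ch}^{\mathrm H}_l(\boldsymbol{\mathrm p})\mapsto r\,\operatorname{ch}^{\mathrm H}_l(\boldsymbol{\mathrm p})$, $\operatorname{ch}^{\mathrm H}_k(\mathbbm 1+\boldsymbol{\mathrm p})\mapsto r^{-1}\operatorname{ch}^{\mathrm H}_k(\mathbbm 1+\boldsymbol{\mathrm p})$. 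The compatibility with $R_k$, $T_k$, $S_k$ is then checked by hand against this rescaling: for $T_k$ the $\operatorname{ch}^{\mathrm H}_a(\mathbbm 1)\operatorname{ch}^{\mathrm H}_b(\boldsymbol{\mathrm p})$ pieces produce an unwanted $(1-r^2)$ term which cancels exactly against the $(r^2-1)$ coming from the $\operatorname{td}_S$-contribution $2\operatorname{ch}^{\mathrm H}_a(\boldsymbol{\mathrm p})\operatorname{ch}^{\mathrm H}_b(\boldsymbol{\mathrm p})$; for $S_k$ the point is that $R_{-1}[\sigma]$ either hits a $\operatorname{ch}^{\mathrm H}_i(\overline{\sigma})$ (creating a $\boldsymbol{\mathrm p}$) or a $\operatorname{ch}^{\mathrm H}_i(\mathbbm 1+\boldsymbol{\mathrm p})$, in either case producing exactly one factor of $r$ that absorbs the $1/r$ in the definition.

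Two gaps in your plan as written: (i) the relation $\phi\circ\xi_{\mathcal I_{\mathcal Z}}=\xi_{\overline{\mathcal F}}\circ\Phi$ does not in general determine a well-defined $\Phi$ on $\mathbb{D}^S$, since the geometric realizations are far from injective---the paper bypasses this by working at the level of integrals via universality; (ii) Theorem~\ref{theoremUnivers} requires $\dim M>2$, so the case $\dim M=2$ must be handled separately by a direct computation (Lemma~\ref{dimM=2}), which you do not mention.
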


\section{Markman's monodromy operator}
Let $S$ be a non-singular projective K3 surface and let $M:=M_H(v)$ be a moduli of sheaves on $S$ as the section \ref{sectionVirasoro}, but in this section, it is possible to take $\operatorname{rk}(v)=0$. Morphisms $\pi_M$ and $\pi_S$ are defined in the same way as the section \ref{sectionVirasoro}.

Define the morphism $\theta_\mathcal{F}:\Lambda\to H^2(M,\mathbb{Q})$ as follows: 
\[\theta_\mathcal{F}(x)=[\pi_{M\ast}(\operatorname{ch}(\mathcal{F})\pi_S^\ast(\sqrt{\text{td}_S}\cdot x^\vee))]_2,\]
where $[ \bullet ]_k$ means take the real degree $k$ component of a cohomology class.

The morphism $B:H^\ast(S,\mathbb{Q})\to H^\ast(M,\mathbb{Q})$ is defined by
\begin{align}
\label{Bmorphism}
    B(x)=\pi_{M\ast}(u_v\cdot x^\vee),
\end{align}
 where \[u_v:=\exp\left( \frac{\theta_{\mathcal{F}}(v)}{(v,v)}\right)\cdot 
\operatorname{ch}(\mathcal{F})\cdot\sqrt{\text{td}_S} \in H^\ast(M\times S,\mathbb{Q}),\]
where some pull-backs $\pi_M^*$, $\pi_S^*$ are suppressed. One can check that $B$ is independent of the choice of the (twisted) universal sheaf $\mathcal{F}$.

Let $S_1$ and $S_2$ be two non-singular projective K3 surfaces with polarizations $H_1,H_2$. Let $g:H^\ast(S_1,\mathbb{Z})\to H^\ast(S_2,\mathbb{Z})$ be an isometry of Mukai lattices, assume $v_1$, $v_2$ are two vectors in the Mukai lattice satisfying assumptions of the section \ref{sectionVirasoro} and $v_2=g(v_1)$. Let $M_i:=M_{H_i}(v_i)$ for $i=1,2$.
Markman defined the transformation $\gamma(g): H^\ast(M_1,\mathbb{C}) \to H^\ast(M_2,\mathbb{C})$ in \cite{markman2005monodromy}:
\begin{align}
    \gamma(g)(x) = \pi_{2\ast}\left(c_{\operatorname{dim}(M)}\big[-\pi_{13\ast}\left(\pi_{12}^\ast \left( (1\otimes g) u_{v_1}\right)^\vee \cdot \pi_{23}^\ast u_{v_2} \right) \big]\cdot \pi_{1}^\ast x\right),
\end{align}
where $\pi_{ij}$ ($\pi_k$) is the projection of $M_1\times S_2 \times M_2$ to the $(i,j)$-th ($k$-th) factor. 
The main properties of $\gamma(g)$ are given in the following theorem: 
\begin{theorem}[Markman]
\label{markmantheorem}
Let $S_1,S_2$ and $v_1,v_2$ as above. For any isometry $g:H^\ast(S_1,\mathbb{C})\to H^\ast(S_2,\mathbb{C})$ such that $g(v_1)=v_2$, $\gamma(g)$ is the unique operator such that: 
\begin{enumerate}
\item[$\mathrm{(i)}$] $\gamma(g)$ is a degree-preserving ring isomorphism and is a isometry with respect to the Poincaré pairing: $\langle x,y\rangle = \int_M xy$ for all $x,y \in H^\ast(M,\mathbb{Q})$. 
\item[$\mathrm{(ii)}$] $(\gamma(g)\otimes g)(u_{v_1})=u_{v_2}$.
\item[$\mathrm{(iii)}$] $\gamma(g_1)\circ \gamma(g_2) = \gamma(g_1g_2)$ and $\gamma(g)^{-1} = \gamma(g^{-1})$(if it makes sens), where $g_1,g_2$ are two isometries.
\item[$\mathrm{(iv)}$] $\gamma(g)\left(c_k(T_{M_1})\right)=c_k(T_{M_2})$.
\end{enumerate}
\end{theorem}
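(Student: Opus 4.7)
The plan is to establish the four properties in the order (ii) $\to$ uniqueness $\to$ (iii) $\to$ (i) $\to$ (iv), so that the characterization can be leveraged throughout. First I would verify property (ii) by direct computation. Interpreting $\gamma(g)$ as a correspondence with kernel
$$K = c_{\operatorname{dim}(M)}\bigl[-\pi_{13\ast}\bigl(\pi_{12}^\ast((1\otimes g)u_{v_1})^\vee \cdot \pi_{23}^\ast u_{v_2}\bigr)\bigr],$$
one applies $(\gamma(g)\otimes g)$ to $u_{v_1}$, expands using the projection formula on $M_1\times S_2\times M_2\times S_2$, and contracts the $S_2$ factors via Serre duality. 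The essential input is an explicit ``diagonal decomposition'' identity realizing $c_{\operatorname{dim}(M_1)}\bigl[-\pi_{13\ast}(\pi_{12}^\ast u_{v_1}^\vee \cdot \pi_{23}^\ast u_{v_1})\bigr] = [\Delta_{M_1}]$, which collapses the composition to the identity on the $M_1$ side and produces $u_{v_2}$ on the $M_2$ side.

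Next I would establish uniqueness. The central input is Markman's generation theorem: $H^\ast(M,\mathbb{Q})$ is generated as a $\mathbb{Q}$-algebra by the Künneth components $\pi_{M\ast}(u_v\cdot \pi_S^\ast\alpha)$ for $\alpha\in H^\ast(S,\mathbb{Q})$. Given this, property (ii) forces the values of $\gamma(g)$ on each generator, and property (i), which makes $\gamma(g)$ a ring homomorphism, then determines it on all of $H^\ast(M_1)$. Property (iii) is immediate from uniqueness: both $\gamma(g_1)\circ\gamma(g_2)$ and $\gamma(g_1 g_2)$ are ring isomorphisms satisfying (ii) relative to $g_1 g_2$, so they coincide; applying the same to $g\circ g^{-1}=\operatorname{id}$ gives the inverse statement.

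The hardest step is (i), particularly the ring-homomorphism and isometry properties. My approach is to use the diagonal decomposition identity above and its analogue on $M_2$: applying $(\gamma(g)\otimes\gamma(g))$ to $[\Delta_{M_1}]$ expressed through $u_{v_1}$, and using (ii) twice, yields $[\Delta_{M_2}]$. Since the cup product on $H^\ast(M_i)$ is represented by pullback-pushforward through $\Delta_{M_i}$, compatibility of the two diagonals under $\gamma(g)\otimes\gamma(g)$ translates into the ring-homomorphism property. For degree-preservation, I would track Hodge weights: both $u_{v_i}$ and $\operatorname{td}_{S_i}$ have pure components, $g$ is an isometry hence degree-preserving, and the top Chern class in the kernel selects the appropriate degree. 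The isometry property with respect to the Poincaré pairing then follows because $\int_M xy$ can be written as pullback along the diagonal, which $\gamma(g)$ respects. Finally, (iv) reduces to (ii) via Mukai's K-theoretic tangent bundle formula $T_{M_i} = -\pi_{M_i\ast}\bigl(\hom(\mathcal{F}_i,\mathcal{F}_i)_0\bigr)$, which expresses $c_k(T_{M_i})$ as a universal polynomial in Künneth components of $u_{v_i}$, so (ii) transports $c_k(T_{M_1})$ to $c_k(T_{M_2})$.

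The main obstacle is Markman's generation theorem and the diagonal decomposition identity, both of which rely on deep deformation-theoretic arguments identifying $M_H(v)$ with a deformation of a Hilbert scheme and exploiting hyperholomorphic bundle structures. Everything in the theorem statement is essentially a formal consequence of these two inputs combined with the correspondence formalism, so the main work is bookkeeping with the projection formula once these facts are in hand.
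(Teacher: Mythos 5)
The paper does not prove this theorem: it is quoted verbatim as a known result of Markman (via Oberdieck's exposition in \cite{Oberdieck_2022} and Markman's original monodromy paper), so there is no in-paper proof to compare your attempt against. Judged on its own terms, your sketch correctly identifies the two deep external inputs --- Markman's generation theorem for $H^\ast(M,\mathbb{Q})$ by the K\"unneth components of $u_v$, and the identity expressing $[\Delta_M]$ as $c_{\dim M}$ of the relative $\mathrm{Ext}$-type class --- and your derivations of uniqueness from the generation theorem and of (iii) from uniqueness match the standard argument.

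There is, however, a genuine circularity in your route to (i). To conclude that $(\gamma(g)\otimes\gamma(g))$ carries $c_{\dim M_1}\bigl[-\pi_{13\ast}(\pi_{12}^\ast u_{v_1}^\vee\cdot\pi_{23}^\ast u_{v_1})\bigr]$ to the corresponding expression in $u_{v_2}$, you must commute $\gamma(g)\otimes\gamma(g)$ past the Chern-class polynomial $c_{\dim M}[\,\cdot\,]$, and that step already requires $\gamma(g)$ to be a ring homomorphism --- the very property you are trying to establish. In Markman's actual argument the ring-isomorphism and isometry properties are not formal consequences of the correspondence formalism: they are first proved for a generating set of isometries (reflections, tensoring by line bundles, Fourier--Mukai equivalences, deformation/parallel-transport operators), where $\gamma(g)$ is realized geometrically and is therefore automatically a graded ring isometry, and the general complex isometry $g$ is then handled by composition and a density argument. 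Your proposal, by treating (i) as bookkeeping with the projection formula, omits this essential geometric input; the same gap propagates to (iv), since transporting Mukai's formula for $T_M$ through $\gamma(g)$ also presupposes the ring-homomorphism property.
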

The above properties imply that 
\begin{equation} \label{eqn:key}
\int_{M_1} \sigma = \int_{M_2}\gamma(g)(\sigma) \quad\forall \sigma\in H^\ast(M_1,\mathbb{Q}).
\end{equation}

In fact the property $(\gamma(g)\otimes g)(u_{v_1})=u_{v_2}$ can be expressed in a nicer way using the $B$ morphism defined in \eqref{Bmorphism}: 
\begin{lemma}[Lemma 2.7 of \cite{Oberdieck_2022}]
\label{lemmamarkman}
    Let $M_i,v_i$ with $i=1,2$ and $g$ as above.
    Let the canonical morphism $B:H^\ast(S,\mathbb{Q}) \to H^\ast(M_i,\mathbb{Q})$ be defined as \eqref{Bmorphism} and write $B_k(x)$ for the component in degree $2k$. Let $f:H^\ast(M_1,\mathbb{Q})\to H^\ast(M_2,\mathbb{Q})$ be a degree preserving isometric ring isomorphism. Then the following are equivalent:
    \begin{enumerate}
    \item[$\mathrm{(i)}$] $(f\otimes g)(u_{v_1})=u_{v_2}$,
    \item[$\mathrm{(ii)}$] $f(B(\gamma))=B(g\gamma)$ for all $\gamma\in H^\ast (S_1,\mathbb{Q})$.
    \end{enumerate}
\end{lemma}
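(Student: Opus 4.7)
The strategy is to view $B$ as a cohomological Fourier--Mukai-type transform with kernel $u_v$: once a basis of $H^*(S,\mathbb{Q})$ is fixed, the Künneth coefficients of $u_v$ and the values of $B$ on that basis determine one another. The equivalence of (i) and (ii) will then follow by transporting this mutual determination across $f\otimes g$.

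The one nontrivial preliminary I would establish is that $g$ preserves the Poincaré pairing $\int_S xy$ on $S$. Since $g$ is a Mukai isometry that commutes with the duality $\vee$ (the latter being a standing convention for the Mukai-lattice isometries in Markman's setup), this follows from $(x,y)=-\int_S x^\vee y$ by direct substitution:
\[
\int_{S_2} g(x)\,g(y)=\int_{S_2}\bigl(g(x^\vee)\bigr)^\vee g(y)=-\bigl(g(x^\vee),g(y)\bigr)=-(x^\vee,y)=\int_{S_1} xy.
\]
In particular, for any basis $\{b_k\}$ of $H^*(S_1,\mathbb{Q})$, the system $\{g(b_k)\}$ is a basis of $H^*(S_2,\mathbb{Q})$, and
\[
\int_{S_2} g(b_k)\,(g\gamma)^\vee=\int_{S_2} g(b_k)\,g(\gamma^\vee)=\int_{S_1} b_k\,\gamma^\vee
\qquad \forall\,\gamma\in H^*(S_1,\mathbb{Q}).
\]

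With this in hand, I would write the Künneth decompositions $u_{v_1}=\sum_k a_k\otimes b_k$ and $u_{v_2}=\sum_k \tilde{a}_k\otimes g(b_k)$ with $a_k\in H^*(M_1,\mathbb{Q})$ and $\tilde{a}_k\in H^*(M_2,\mathbb{Q})$. Unwinding the definition of $B$ and applying the identity above yields
\[
B_1(\gamma)=\sum_k a_k\int_{S_1} b_k\,\gamma^\vee,\qquad B_2(g\gamma)=\sum_k \tilde{a}_k\int_{S_1} b_k\,\gamma^\vee,
\]
and therefore
\[
f(B_1(\gamma))-B_2(g\gamma)=\sum_k \bigl(f(a_k)-\tilde{a}_k\bigr)\int_{S_1} b_k\,\gamma^\vee.
\]

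Condition (i) is equivalent to $\tilde{a}_k=f(a_k)$ for all $k$ (since $\{g(b_k)\}$ is a basis), which immediately implies (ii). Conversely, (ii) forces the displayed expression to vanish for every $\gamma$; specializing $\gamma$ to run over $\{b_l^{*\vee}\}$, where $\{b_l^*\}$ is the Poincaré-dual basis of $\{b_k\}$, isolates each coefficient and gives $f(a_l)=\tilde{a}_l$, hence (i). The main technical ingredient here is the preliminary Poincaré-pairing identity, which itself rests on the $\vee$-compatibility of $g$; this is built into the conventions for Markman's Mukai isometries and so poses no real obstacle in the present setting.
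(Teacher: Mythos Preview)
The paper does not supply its own proof of this lemma; it is quoted from Oberdieck. So there is no paper argument to compare against, and I evaluate your proposal on its merits.

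Your K\"unneth-decomposition strategy is sound, but the justification of the key identity has a genuine gap. You assert that the Mukai isometries $g$ in Markman's setup commute with the duality $\vee$, and from this deduce that $g$ preserves the Poincar\'e pairing $\int_S xy$. Neither claim holds for a general Mukai isometry, and the paper (following Markman and Oberdieck) allows $g$ to be an \emph{arbitrary} isometry of the Mukai lattice sending $v_1$ to $v_2$. For a concrete counterexample, take $g=e^L\cdot(-)$ for nonzero $L\in H^2(S,\mathbb{Q})$: since $(e^Lx)^\vee=e^{-L}x^\vee$, one has $(e^Lx,e^Ly)=-\int_S e^{-L}x^\vee e^Ly=(x,y)$, so $g$ is a Mukai isometry; yet $(e^Lx)^\vee=e^{-L}x^\vee\neq e^Lx^\vee=g(x^\vee)$, and $\int_S e^{2L}xy\neq\int_S xy$ whenever $L^2\neq 0$.

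The repair is immediate and in fact shortens your argument: the identity you need,
\[
\int_{S_2} g(b_k)\,(g\gamma)^\vee=\int_{S_1} b_k\,\gamma^\vee,
\]
follows directly from $g$ being a Mukai isometry together with the symmetry of the Mukai pairing, via
\[
\int_{S_2} g(b_k)\,(g\gamma)^\vee=-(g\gamma,g(b_k))=-(\gamma,b_k)=\int_{S_1} b_k\,\gamma^\vee.
\]
No appeal to $\vee$-equivariance or to the Poincar\'e pairing is required. With this correction in place, the remainder of your proof goes through verbatim.
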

Therefore, we have the following commutative diagram.
\[
    \begin{tikzcd}
H^\ast(S_1,\Q) \arrow{r}{g} \arrow[swap]{d}{B} & H^\ast(S_2,\Q) \arrow{d}{B} \\%
H^\ast(M_1,\Q) \arrow{r}{\gamma(g)}& H^\ast(M_2,\Q)
\end{tikzcd}\]

Now, let $x_1\dots x_k \in \Lambda $, consider the following descendent integrals over $M$:
\[\int_M P(B_i(x_j),c_l(T_M)),\]
where $P(t_{ij},u_l)$ is a polynomial with $\mathbb{Q}$-coefficients depending on the variables $t_{ij}$, with $j=1,\dots,k$, $i\geq 0$ and $u_l$, $l\geq 1$.

Using Markman's operator, Oberdieck proved a universality property for integrals over $M$; this roughly means that integral $\int_M P(B_i(x_j),c_l(T_M))$ only depends on the polynomial $P$, the dimension of $M$ and pairings $(v,x_i)$, $(x_i,x_j)$ for all $i,j$ i.e. the intersection matrix
\begin{align*}
    \begin{pmatrix}
(v,v) & (v,x_i)_{i=1}^k \\
(x_i,v)_{i=1}^k & (x_i,x_j)_{i,j=1}^k
\end{pmatrix}.
\end{align*}

I will now explain this in detail. In fact, given the data $(M(v),x_i)$ as above, the Lemma 2.11 of \cite{Oberdieck_2022} shows that there exists $y_i\in \Lambda_{\mathbb{C}}$ which have the same above intersection matrix, and satisfy
\[\int_MP(B_i(x_j),c_l(T_M))=\int_MP(B_i(y_j),c_l(T_M))\]
and $\operatorname{Span}(v,y_1,\dots,y_k)$ is a non-degenerate subspace of $\Lambda_{\mathbb{C}}$ (i.e. the restriction of the inner product of $\Lambda_{\mathbb{C}}$, induced by the Mukai pairing, onto the subspace is non-degenerate).
The condition $\operatorname{dim} M>2$ is used to obtain the Lemma 2.11 of \cite{Oberdieck_2022}, therefore if one always assume $\operatorname{dim} M>2$ then given two pairs $(M(v),x_i)$ and $(M(v'),x_i')$ with the same intersection matrix, one may always assume that $\operatorname{Span}(v,x_1,\dots,x_k)$ and $\operatorname{Span}(v',x_1',\dots,x_k')$ are non-degenerate. 

Then, one can use the following lemma:
\begin{lemma}[Lemma 2.13 of \cite{Oberdieck_2022} ]
\label{Oberdlemma}
    Let $V$ be a finite-dimensional $\mathbb{C}$-vector space with a $\mathbb{C}$-linear inner product. Let $v_1,\dots,v_k\in V$ and $w_1,\dots,w_k\in V$ be lists of vectors such that
    \begin{enumerate}
    \item[$\mathrm{(i)}$] $\operatorname{Span}(v_1,\dots,v_k)$ is non-degenerate,
    \item[$\mathrm{(ii)}$] $\operatorname{Span}(w_1,\dots,w_k)$ is non-degenerate, 
    \item[$\mathrm{(iii)}$] $\langle v_i,v_j \rangle = \langle w_i,w_j \rangle$ for all $i,j$.
    \end{enumerate}
    Then there exists an isometry $\phi:V\to V$ such that $\phi(v_i)=w_i$ for all $i$.
 \end{lemma}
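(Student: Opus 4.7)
The plan is to prove this via a standard Witt-extension style argument, building the isometry first on the spans and then extending to orthogonal complements.

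First I would verify that the hypothesis (iii) together with non-degeneracy (i), (ii) forces the linear dependencies among the $v_i$'s to coincide with those among the $w_i$'s. Concretely, if $\sum a_i v_i = 0$, then for every $j$ we have $\sum_i a_i \langle v_i, v_j\rangle = 0$, hence by (iii) also $\sum_i a_i \langle w_i, w_j\rangle = 0$, so $\sum a_i w_i$ lies in the radical of $\operatorname{Span}(w_1,\dots,w_k)$, which is trivial by (ii). Hence $\sum a_i w_i = 0$. By symmetry the same works in the opposite direction. This lets me define a well-defined linear map $\phi_0: \operatorname{Span}(v_1,\dots,v_k) \to \operatorname{Span}(w_1,\dots,w_k)$ by $\phi_0(v_i) = w_i$, and hypothesis (iii) says exactly that $\phi_0$ preserves the inner product.

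Next I would extend $\phi_0$ to the whole of $V$. Since $V$ carries a non-degenerate inner product and both $\operatorname{Span}(v_i)$ and $\operatorname{Span}(w_i)$ are non-degenerate subspaces (by (i), (ii)), I obtain orthogonal direct sum decompositions
\begin{equation*}
V = \operatorname{Span}(v_1,\dots,v_k) \oplus \operatorname{Span}(v_1,\dots,v_k)^\perp = \operatorname{Span}(w_1,\dots,w_k) \oplus \operatorname{Span}(w_1,\dots,w_k)^\perp.
\end{equation*}
Both complements are non-degenerate and have the same dimension, namely $\dim V$ minus the common rank of the Gram matrix from (iii).

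Finally, over $\mathbb{C}$, two non-degenerate symmetric bilinear forms on finite-dimensional vector spaces of equal dimension are isometric (every such form is diagonalizable to $\mathrm{diag}(1,\dots,1)$, since $\mathbb{C}$ is algebraically closed and every non-zero scalar is a square). Thus there exists an isometry $\psi: \operatorname{Span}(v_i)^\perp \to \operatorname{Span}(w_i)^\perp$, and I set $\phi := \phi_0 \oplus \psi$; this is an isometry of $V$ sending $v_i \mapsto w_i$. The only mildly delicate point is the very first step of matching linear dependencies, which is where the non-degeneracy hypotheses (i) and (ii) are genuinely used; the rest is routine once the inner product on $V$ is assumed non-degenerate (the standing convention for ``inner product'' here, consistent with the Mukai pairing on $\Lambda_\mathbb{C}$ in the application).
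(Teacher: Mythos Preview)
Your argument is correct. The paper itself does not give a proof of this lemma; it is quoted verbatim from \cite{Oberdieck_2022} and used as a black box, so there is no in-paper proof to compare against. Your Witt-extension approach---first matching linear dependencies via the Gram matrix and non-degeneracy, then defining the isometry on the spans, then extending across orthogonal complements using that over $\mathbb{C}$ all non-degenerate symmetric bilinear forms of a given dimension are equivalent---is exactly the standard proof and is complete as written. Your remark that the ambient form on $V$ must be assumed non-degenerate (implicit in the phrase ``inner product'' and consistent with the Mukai pairing in the intended application) is the one point worth flagging, and you flagged it.
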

Since I always assume that $\operatorname{Span}(v,x_1,\dots,x_k)$ and $\operatorname{Span}(v',x_1',\dots,x_k')$ are non-degenerate, by above lemma, there exists an isometry $g:H^\ast(S,\mathbb{C})\to H^\ast(S',\mathbb{C})$ taking $(v,x_1,\dots,x_k)$ to $(v'=gv,x_1'=gx_1,\dots,x_k'=gx_k)$. 
 Therefore by the properties of the Markman operator and morphism $B$ one has:
 \begin{align*}
     \int_{M(v)} P(B_i(x_j),c_l(T_{M(v)})) &=  \int_{M(v')} \gamma(g)P(B_i(x_j),c_l(T_{M(v')}))\\
     &=  \int_{M(v')} P(B_i(gx_j),c_l(T_{M(v')}))\\
     &=  \int_{M(v')} P(B_i(x_j'),c_l(T_{M(v')})),
 \end{align*}
 where the first equality is by the Theorem \ref{markmantheorem} and the second equality is by the Lemma \ref{lemmamarkman}.
This leads to the following theorem:
\begin{theorem}[Oberdieck \cite{Oberdieck_2022}]
\label{theoremUnivers}
let $P(t_{ij},u_r)$ be a polynomial depending on the variables
$t_{ij}$, $j = 1,...,k,$   $i \geq 0$, and $u_l$, $l\geq 1$. Let also $A = (a_{ij})^k_{i,j=0}$ be a $(k + 1) \times (k + 1)$-matrix. Then there exists $I(P,A)\in \mathbb{Q}$ ($I(P,A)$ is a rational number only depending on $P,A$) such that for any $M=M_H(v)$ with $dim(M)>2$ and for $x_1,...,x_k\in \Lambda$ also with $v\in\Lambda$ as above.
\[
\begin{pmatrix}
(v,v) & (v,x_i)_{i=1}^{k} \\
(x_i,v)_{i=1}^k & (x_i,x_j)_{i,j=1}^k
\end{pmatrix}=A,\]
such that
\[\int_M P(B_i(x_j),c_l(T_M)) = I(P,A).\]
\end{theorem}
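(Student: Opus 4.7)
The plan is to show that the integral $\int_M P(B_i(x_j), c_l(T_M))$ depends only on the pair $(P, A)$, so one may define $I(P,A)$ as the common value. Concretely, given two tuples $(M_{H}(v), x_1, \ldots, x_k)$ and $(M_{H'}(v'), x_1', \ldots, x_k')$ (possibly on different K3 surfaces $S, S'$) both yielding intersection matrix $A$ and satisfying $\dim M > 2$, I would exhibit an identification of the two integrals through Markman's operator.

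First I would invoke the non-degeneracy reduction (Lemma 2.11 of \cite{Oberdieck_2022}, which is where the hypothesis $\dim M > 2$ enters) to replace each tuple with a complex perturbation $(v, y_1, \ldots, y_k) \in \Lambda_{\mathbb{C}}^{k+1}$ carrying the same intersection matrix $A$ and producing the same descendent integral, but with the additional property that $\operatorname{Span}(v, y_1, \ldots, y_k)$ is non-degenerate for the Mukai pairing; analogously for the primed tuple. Applying Lemma \ref{Oberdlemma} to these two non-degenerate spans then yields a $\mathbb{C}$-linear isometry $g\colon H^*(S, \mathbb{C}) \to H^*(S', \mathbb{C})$ with $g(v) = v'$ and $g(y_j) = y_j'$ for all $j$.

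Next, Markman's Theorem \ref{markmantheorem} produces a degree-preserving ring isomorphism $\gamma(g)\colon H^*(M_H(v), \mathbb{C}) \to H^*(M_{H'}(v'), \mathbb{C})$ which is a Poincaré isometry and satisfies $\gamma(g)(c_l(T_{M_H(v)})) = c_l(T_{M_{H'}(v')})$. Lemma \ref{lemmamarkman} upgrades property (ii) of Markman's theorem to the intertwining $\gamma(g)(B_i(\gamma)) = B_i(g\gamma)$ for every $i$ and $\gamma \in H^*(S, \mathbb{C})$. Combining the ring isomorphism property, compatibility with $B$, compatibility with $c_l(T_M)$, and equation \eqref{eqn:key}, one obtains
\[
\int_{M_H(v)} P(B_i(y_j), c_l(T_M)) = \int_{M_{H'}(v')} \gamma(g)\bigl[P(B_i(y_j), c_l(T_M))\bigr] = \int_{M_{H'}(v')} P(B_i(y_j'), c_l(T_{M'})),
\]
which is exactly the claimed independence. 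Rationality of the common value $I(P,A)$ follows from the fact that, before perturbation, the descendent integral is a rational polynomial expression in Chern numbers on a smooth projective variety.

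The main obstacle is the non-degeneracy reduction: the original spans $\operatorname{Span}(v, x_1, \ldots, x_k)$ need not be non-degenerate for the Mukai form, so Lemma \ref{Oberdlemma} cannot be applied directly to the $x_i$. One must perturb the $x_i$ within $\Lambda_{\mathbb{C}}$ (exploiting the size of the orthogonal complement of $\operatorname{Span}(v, x_1, \ldots, x_k)$ guaranteed by $\dim M > 2$) in a way that preserves both the intersection matrix $A$ and the value of the descendent integral; the latter preservation uses that the integral depends polynomially on the $B_i(x_j)$, together with the fact that $B$ is itself linear in its argument.
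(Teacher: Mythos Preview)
Your proposal is correct and follows essentially the same route as the paper's argument preceding Theorem~\ref{theoremUnivers}: reduce to non-degenerate spans via Lemma~2.11 of \cite{Oberdieck_2022} (where $\dim M>2$ enters), apply Lemma~\ref{Oberdlemma} to obtain the isometry $g$, and then use Theorem~\ref{markmantheorem}, Lemma~\ref{lemmamarkman}, and \eqref{eqn:key} to identify the integrals. Your additional remark on rationality of $I(P,A)$ is a reasonable supplement not spelled out in the paper.
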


\section{Positive rank Virasoro constraints}
\label{positiverank}
\subsection{$\operatorname{dim}M>2$ case }
\label{bigger2}
As in Section \ref{sectionVirasoro}, let $S$ be a non-singular projective K3 surface, let $v=(r,D,n)\in \Lambda$ be a Mukai vector with $r=\operatorname{rk}(v)>0$. Assume $\operatorname{dim}(M_H(v))>2$.
I would like to use the universality result the Theorem \ref{theoremUnivers} introduced in the previous section to prove the Virasoro constraints for positive rank on $M_H(v)$.
Recall that, for all $\gamma\in H^\ast(S,\C)$ the symbol $\operatorname{ch}^\mathrm{H}_i(\gamma)$ is mapped to $H^\ast(M)$ by:
\[\operatorname{ch}^\mathrm{H}_\bullet(\gamma) = \pi_{M\ast}\left(\operatorname{ch}\left(\mathcal{F}\otimes \det{(\mathcal{F})}^{-1/\text{rk}(v)}\right)\pi_S^\ast (\gamma)\right),\forall \gamma\in H^\ast(S,\C).\]

We now use following brief calculation to express this in terms of $B(\bullet)$.
\begin{align}
\begin{split}
    \label{calculation4.1}
    & \pi_{M\ast}(\operatorname{ch}(\mathcal{F}\otimes \det{(\mathcal{F})}^{-1/\text{rk}(v)})\pi_S^\ast (\gamma)) 
    \\&= \pi_{M\ast}(\operatorname{ch}(\mathcal{F})\exp{\left(-\frac{c_1(\mathcal{F})}{\text{rk}(v)}\right) \pi_S^\ast(\gamma)})\\
    &=\pi_{M\ast}\left(\operatorname{ch}(\mathcal{F})\exp{\left(-\frac{\theta_\mathcal{F}(\boldsymbol{\operatorname{p}})+c_1(v)}{\text{rk}(v)} \right)} \pi_S^\ast(\gamma)\right)\\
    &=\exp{\left(\frac{\theta_\mathcal{F}(\boldsymbol{\operatorname{p}})}{(\boldsymbol{\operatorname{p}},v)}-\frac{\theta_\mathcal{F}(v)}{(v,v)}\right)}\exp{\left(\frac{\theta_\mathcal{F}(v)}{(v,v)}\right)}\pi_{M\ast}\left(\operatorname{ch}(\mathcal{F})\pi_S^\ast\left(\exp{\left(\frac{c_1(v)}{\text{rk}(v)} \right)} \gamma^\vee \sqrt{\text{td}_S}^{-1}\right)^\vee \sqrt{\text{td}_S}\right)\\
    &=\exp{\left( B_1\left(\frac{\boldsymbol{\operatorname{p}}}{(\boldsymbol{\operatorname{p}},v)}-\frac{v}{(v,v)}\right) \right) B\left(\left(\exp{\left(\frac{c_1(v)}{\text{rk}(v)} \right)} \gamma^\vee \sqrt{\text{td}_S}^{-1}\right)\right)},
    \end{split}
\end{align}
where the notation \eqref{notation} is used, also notice that $\operatorname{dim}(M)>2$ implies $(v,v)>0$. The second equality used the fact that, by K\"unneth decomposition, $c_1(\mathcal{F}) = \pi_M^\ast(\theta_{\mathcal{F}}(\boldsymbol{\operatorname{p}}))+ \pi_S^\ast(c_1(v)) $, the last line used the fact that $B_1(x)=\theta_\mathcal{F}(x)$ for $x\in v^\perp$ (see \cite{Oberdieck_2022}). 

Therefore $\operatorname{ch}^\mathrm{H}_i(\gamma)$ for $\gamma\in H^{p,q}(S)$ is the degree $2i-p+q$ component of 
\begin{align}
\label{Bform}
    \exp{\left( B_1\left(\frac{\boldsymbol{\operatorname{p}}}{(\boldsymbol{\operatorname{p}},v)}-\frac{v}{(v,v)}\right) \right) B\left(\left(\exp{\left(\frac{c_1(v)}{\text{rk}(v)} \right)} \gamma^\vee \sqrt{\text{td}_S}^{-1}\right)\right)}.
\end{align}

Consider the evaluation of the integral over $M(v)$ of a polynomial of $\operatorname{ch}_i^\mathrm{H}(\gamma)$'s. By Theorem \ref{theoremUnivers}, one only need to look at the intersection matrix of the classes appearing in the arguments of $B$ and $B_1$ in \eqref{Bform}.

Therefore, for $L\in H^2(S,\mathbb{Q})$, one needs to keep track of \[L \exp{\left(\frac{c_1(v)}{\text{rk}(v)} \right)} ,\quad \frac{\boldsymbol{\operatorname{p}}}{\text{rk}(v)},\quad v. \]

For $\boldsymbol{\operatorname{p}}$, one needs to keep track of \[\boldsymbol{\operatorname{p}} ,\quad \frac{\boldsymbol{\operatorname{p}}}{\text{rk}(v)},\quad v. \]

For $\mathbbm{1}$, one needs to keep track of \[\exp{\left(\frac{c_1(v)}{\text{rk}(v)} \right)} \sqrt{\text{td}_S}^{-1} ,\quad \frac{\boldsymbol{\operatorname{p}}}{\text{rk}(v)},\quad v. \]

Recall that for K3 surface, I have $\sqrt{\operatorname{td}_S}=1+\boldsymbol{\operatorname{p}}$. Now, also consider the descendent of the form $\ch{k}{\mathbbm{1}+\boldsymbol{\operatorname{p}}}$.
$\operatorname{ch}^\mathrm{H}_k(\mathbbm{1}+\boldsymbol{\boldsymbol{\operatorname{p}}})$ is the degree $2k$ component of the following class 
\[\left[ \pi_{M\ast}\left(\operatorname{ch}(\mathcal{F}\otimes \det{(\mathcal{F})}^{-1/\text{rk}(v)})\pi_S^\ast (\mathbbm{1}+\boldsymbol{\boldsymbol{\operatorname{p}}})\right)\right]_k,\] which equals $\operatorname{ch}^\mathrm{H}_k(\mathbbm{1})+\operatorname{ch}^\mathrm{H}_k(\boldsymbol{\boldsymbol{\operatorname{p}}})$.
Using the fact $(\mathbbm{1}+\boldsymbol{\operatorname{p}})\sqrt{\mathrm{td_S}}^{-1}=\mathbbm{1}$; for $\operatorname{ch}_k(\mathbbm{1}+\boldsymbol{\boldsymbol{\operatorname{p}}})$ one needs to track
\[\exp{\left(\frac{c_1(v)}{\text{rk}(v)} \right)},\quad \frac{\boldsymbol{\operatorname{p}}}{\text{rk}(v)},\quad v. \]

One can choose the generators of $\mathbb{D}^S$ as 
\begin{align}
\label{basis}
    \{\operatorname{ch}_j^\mathrm{H}(L_i),\operatorname{ch}^\mathrm{H}_k(\boldsymbol{\operatorname{p}}),\operatorname{ch}_l^\mathrm{H}(\mathbbm{1}+\boldsymbol{\operatorname{p}})|j,k,l\in\mathbb{N}, \{L_i\}_i \text{ forms a basis of } H^2(S,\C) \}.
\end{align} Using this set of generators, one only needs to keep track of the intersection matrix of the following elements to evaluate descendent integrals.
\begin{align}
\label{classes}
    L_i \exp{\left(\frac{c_1(v)}{\text{rk}(v)} \right)},\quad\text{with }L_i\in H^2(S,\mathbb{C});\quad \exp{\left(\frac{c_1(v)}{\text{rk}(v)} \right)};\quad \boldsymbol{\operatorname{p}};\quad\frac{\boldsymbol{\operatorname{p}}}{\text{rk}(v)};\quad v.
\end{align}
One can calculate Mukai pairings between the above classes, the only interesting pairings are:
\begin{align*}
    (v,L_i \exp{\left(\frac{c_1(v)}{\text{rk}(v)} \right)})
    %&= -\int_S(r,-D,n)\cdot(0,\gamma,\frac{\gamma\cdot D}{r})\\
    %&= -\gamma\cdot D + \gamma\cdot D \\
    &=0\\
    (\boldsymbol{\operatorname{p}}/\text{rk}(v),L_i \exp{\left(\frac{c_1(v)}{\text{rk}(v)} \right)})&=0\\
    (L_i \exp{\left(\frac{c_1(v)}{\text{rk}(v)} \right)},L_j \exp{\left(\frac{c_1(v)}{\text{rk}(v)} \right)})
    %&=-\int_S(0,-\gamma_1,\frac{\gamma_1\cdot D}{r})\cdot(0,\gamma_2,\frac{\gamma_2\cdot D}{r})
    &= L_iL_j\\
    (\exp{\left(\frac{c_1(v)}{\text{rk}(v)} \right)},L_i\exp{\left(\frac{c_1(v)}{\text{rk}(v)} \right)})
    %&=-\int_S(1,-\frac{D}{r},\frac{D^2}{2r^2})\cdot(0,\gamma,\frac{\gamma\cdot D}{r})\\
    &= 0\\
    (v,\exp{\left(\frac{c_1(v)}{\text{rk}(v)} \right)})
    %&=-\int_S (r,-D,n)(1,\frac{D}{r},\frac{D^2}{2r^2})\\
    %&=-\left( \frac{D^2}{2r}-\frac{D^2}{r} +n \right)\\
    &= \frac{D^2}{2r}-n,\\
    (\frac{\boldsymbol{\operatorname{p}}}{\text{rk}(v)},\exp{\left(\frac{c_1(v)}{\text{rk}(v)} \right)})
    %&=-\int_S (0,0,1/r)(1,\frac{D}{r},\frac{D^2}{2r^2})\\
    &=-\frac{1}{r}\\
    (\exp{\left(\frac{c_1(v)}{\text{rk}(v)} \right)},\exp{\left(\frac{c_1(v)}{\text{rk}(v)} \right)})
    %&=-\int_S (1,-\frac{D}{r},\frac{D^2}{2r^2})(1,\frac{D}{r},\frac{D^2}{2r^2})\\
    &=0\\
    (\boldsymbol{\operatorname{p}},v)&=-r
\end{align*}

Consider the following integral:
\[\int_{M(v)}P\left(\operatorname{ch}_i(L_j),\operatorname{ch}_l(\boldsymbol{\operatorname{p}}),\operatorname{ch}_k(\mathbbm{1}+\boldsymbol{\boldsymbol{\operatorname{p}}})\right),\] 
where $P$ is a polynomial, $L_j\in H^2(S,\mathbb{Q})$ and $v=(r,D,n)$. This integral is defined for general $v\in \Lambda$ that satisfies assumptions of Section \ref{sectionVirasoro}, one can now take a specific value by setting $v=\mathbbm{1}-(n-1)\boldsymbol{\operatorname{p}}$, which leads to the following

\begin{lemma}
\label{claim}
Let $P$ be a polynomial with variables in \eqref{basis}, and $v=(r,D,n)$
\begin{align}  
\begin{split}
\int_{M(v)}&P\left(\operatorname{ch}^\mathrm{H}_i(L_j),\operatorname{ch}^\mathrm{H}_l(\boldsymbol{\mathrm{p}}),\operatorname{ch}^\mathrm{H}_k(\mathbbm{1}+\boldsymbol{\mathrm{p}}) \right)\\
&=\int_{S^{[N]}}P\left(\operatorname{ch}^\mathrm{H}_i(L_j),r\operatorname{ch}^\mathrm{H}_l(\boldsymbol{\mathrm{p}}),\frac{1}{r}\operatorname{ch}^\mathrm{H}_k(\mathbbm{1}+\boldsymbol{\mathrm{p}}) \right),
\end{split}
\end{align}
where $2N-2=(v,v)=D^2-2rn$.
\end{lemma}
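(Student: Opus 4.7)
The plan is to apply Oberdieck's universality theorem (Theorem \ref{theoremUnivers}) after rewriting each descendent via \eqref{Bform}. For the generators in \eqref{basis}, the class $\exp(c_1(v)/r)\,\gamma^\vee \sqrt{\text{td}_S}^{-1}$ entering the $B$-factor simplifies (using $\sqrt{\text{td}_S}^{-1} = 1 - \boldsymbol{\operatorname{p}}$ and $(\mathbbm{1}+\boldsymbol{\operatorname{p}})\sqrt{\text{td}_S}^{-1} = \mathbbm{1}$ on a K3) to one of $-L_j\exp(c_1(v)/r)$, $\boldsymbol{\operatorname{p}}$, or $\exp(c_1(v)/r)$; the common exponential prefactor contributes $B_1(\boldsymbol{\operatorname{p}})$ and $B_1(v)$. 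The left-hand side is therefore the integral over $M_H(v)$ of a polynomial in $B_k$-values of the classes in \eqref{classes}, so by Theorem \ref{theoremUnivers} it is determined entirely by the Mukai intersection matrix of those classes together with $v$.

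I would then specialize $v' := \mathbbm{1} - (N-1)\boldsymbol{\operatorname{p}} = (1,0,1-N)$, for which $(v',v') = 2N-2 = D^2 - 2rn = (v,v)$, so $\dim M_H(v') = \dim M_H(v) = 2N$. By linearity of $\operatorname{ch}^\mathrm{H}$ in its argument, the rescaled Hilbert-side descendents equal $\operatorname{ch}^\mathrm{H}_l(r\boldsymbol{\operatorname{p}})$ and $\operatorname{ch}^\mathrm{H}_k((\mathbbm{1}+\boldsymbol{\operatorname{p}})/r)$; applying \eqref{Bform} with $v'$ (where $c_1(v')=0$ and $\operatorname{rk}(v')=1$), the $B$-arguments on the Hilbert side become $-L_j$, $r\boldsymbol{\operatorname{p}}$, and $\mathbbm{1}/r$, together with $\boldsymbol{\operatorname{p}}$ and $v'$ in the exponential prefactor, and the polynomial $P$ in the lemma statement is the same on both sides.

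The key step is verifying that the Mukai intersection matrix is preserved under this substitution; the rescaling factors $r$ and $1/r$ are precisely what absorbs the change in Mukai vector. The pairings among only $L_j$, $\boldsymbol{\operatorname{p}}$ and the exponentials already agree because the two rescalings cancel in pairs, for instance $(\exp(c_1(v)/r), \boldsymbol{\operatorname{p}}) = -1$ matches $(\mathbbm{1}/r, r\boldsymbol{\operatorname{p}}) = -1$. The pairings with the Mukai vector match by direct calculation using $2N-2 = D^2 - 2rn$: $(\exp(c_1(v)/r), v) = D^2/(2r) - n$ equals $(\mathbbm{1}/r, v') = (N-1)/r$, and $(\boldsymbol{\operatorname{p}}, v) = -r$ equals $(r\boldsymbol{\operatorname{p}}, v') = -r$. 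The $L_j$-pairings reduce to the intersection form on $H^2(S)$ on both sides, and all remaining pairings vanish on both sides for degree reasons. With the intersection matrices equal, Theorem \ref{theoremUnivers} identifies the two integrals.

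The main (bookkeeping) obstacle is the calibration of the two rescaling factors: they are uniquely determined by the requirement that every pairing with $v$ remain invariant under the specialization $v \to v' = (1,0,1-N)$, which is exactly why $\operatorname{ch}^\mathrm{H}_l(\boldsymbol{\operatorname{p}})$ and $\operatorname{ch}^\mathrm{H}_k(\mathbbm{1}+\boldsymbol{\operatorname{p}})$ must be scaled by $r$ and $1/r$ respectively. Once this is matched, all of the geometric content is packaged inside Theorem \ref{theoremUnivers}.
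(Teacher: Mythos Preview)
Your proposal is correct and follows essentially the same approach as the paper: express the descendents via \eqref{Bform}, specialize the Mukai vector to $v'=(1,0,1-N)$ so that $M_H(v')\cong S^{[N]}$, verify that the Mukai intersection matrix of the resulting $B$-arguments (together with $v$, resp.\ $v'$) is unchanged under the substitution with the $r$ and $1/r$ rescalings, and conclude by Theorem~\ref{theoremUnivers}. The pairing checks you carry out match the paper's computations exactly; the only cosmetic difference is that the paper lists the tracked classes as \eqref{classes} and \eqref{classes2} (in particular tracking $\boldsymbol{\operatorname{p}}/r$ from the exponential prefactor separately), whereas you describe the prefactor more informally as contributing $B_1(\boldsymbol{\operatorname{p}})$ and $B_1(v)$---be careful here to keep the coefficient $1/(\boldsymbol{\operatorname{p}},v)=-1/r$ when matching the two sides.
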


\begin{proof}
We now consider the RHS of \eqref{claim}. By setting $v=\mathbbm{1}-(N-1)\boldsymbol{\operatorname{p}}$, I have: 
\begin{align*}
    \begin{split}
    (\mathbbm{1}-(N-1)\boldsymbol{\operatorname{p}},\mathbbm{1}-(N-1)\boldsymbol{\operatorname{p}})=2N-2=(v,v)=D^2-2rn.
\end{split}
\end{align*}
The classes in \eqref{classes} become (up to multiplication of $1/r$ or $r$):
\begin{align}
\label{classes2}
    L_i,\quad\text{for }L_i\in H^2(S);\quad \frac{\mathbbm{1}}{r};\quad r\boldsymbol{\operatorname{p}};\quad\boldsymbol{\operatorname{p}};\quad \mathbbm{1}-(N-1)\boldsymbol{\operatorname{p}}.
\end{align}
Notice that $\mathbbm{1}$ is multiplied with $1/r$ and $\boldsymbol{\operatorname{p}}$ is multiplied with $r$, this is harmless since $r>0$. One can calculate Mukai pairings among these classes, the only interesting pairings are the following:
\begin{align*}
(L_i,L_j)&=L_iL_j\\
    (v,\frac{\mathbbm{1}}{r})&=\frac{1}{r}(N-1) = \frac{1}{r}(\frac{1}{2}D^2-rn)\\
    (\boldsymbol{\operatorname{p}},\frac{\mathbbm{1}}{r})&=-\frac{1}{r}\\
    (\boldsymbol{\operatorname{p}},v)&=-r.
\end{align*}
By comparing with the pairings of \eqref{classes} one can see that \eqref{classes} and \eqref{classes2} have the same pairing matrix.

By the last line of \eqref{calculation4.1}, the class $\boldsymbol{\operatorname{p}}$ in \eqref{classes} comes from the $B_1$ part of $\operatorname{ch}^\mathrm{H}_i(\boldsymbol{\operatorname{p}})$, it is multiplied by $r$ in \eqref{classes2}, therefore it comes from the $B_1$ part of $r\operatorname{ch}^\mathrm{H}_i(\boldsymbol{\operatorname{p}})$; analogously, $\mathbbm{1}/r$ in \eqref{classes2} comes from $1/r\operatorname{ch}^\mathrm{H}_i(\mathbbm{1}+\boldsymbol{\operatorname{p}})$.
Therefore, the classes in \eqref{classes2} are the arguments of $B$ and $B_1$ function of 
\[\{\operatorname{ch}_j^\mathrm{H}(L_i),r\operatorname{ch}^\mathrm{H}_k(\boldsymbol{\operatorname{p}}),\frac{1}{r}\operatorname{ch}_l^\mathrm{H}(\mathbbm{1}+\boldsymbol{\operatorname{p}})|j,k,l\in\mathbb{N}, \{L_i\}_i \text{ forms a basis of } H^2(S,\C) \}.\] 
Therefore by Theorem \ref{theoremUnivers}, the lemma is true.
\end{proof}
In the following paragraphs, I will use the above lemma to prove the Virasoro constraints on $M(v)$ in the positive rank case. Without of lose of generality one can always assume $D\in \mathbb{D}^S$ is a monomial and $D$ does not contain $\operatorname{ch}^\mathrm{H}_i(\mathbbm{1})$'s, since one can always use the relation 
\begin{align}
\label{relation}
    \operatorname{ch}_k^\mathrm{H}(\mathbbm{1}+\boldsymbol{\operatorname{p}})= \operatorname{ch}_k^\mathrm{H}(\mathbbm{1})+\operatorname{ch}_k^\mathrm{H}(\boldsymbol{\operatorname{p}})
\end{align}
 to rewrite $D$. Then by the Lemma \ref{claim}, one has 
\begin{align*}
    \int_{M(v)}R_k D =r^{N_p}\int_{S^{[N]}}R_k D,
\end{align*}
where $N_p$ is the number of $\operatorname{ch}^\mathrm{H}_i(\boldsymbol{\operatorname{p}})$ minus the number of $\mathrm{ch}^\mathrm{H}_j(\mathbbm{1} + \boldsymbol{\operatorname{p}})$ in $D$.  Next, I claim that 
\begin{align}
\label{claim2}
    \int_{M(v)}T_k D = r^{N_p} \int_{S^{[N]}}T_k D.
\end{align}

For $T_k$, the terms which are of interest are the terms containing $\operatorname{ch}_i(\mathbbm{1})$ or $\operatorname{ch}_i(\boldsymbol{\operatorname{p}})$, in $T_k$ those terms are:
\begin{align*}
\sum_{a+b=k}\left(a!b!\operatorname{ch}^\mathrm{H}_a(\mathbbm{1})\operatorname{ch}^\mathrm{H}_b(\boldsymbol{\operatorname{p}})+ a!b!\operatorname{ch}^\mathrm{H}_a(\boldsymbol{\operatorname{p}})\operatorname{ch}^\mathrm{H}_b(\mathbbm{1})\right).
\end{align*} and \begin{align*}
    \sum_{a+b=k} a!b!\operatorname{ch}^\mathrm{H}_a\operatorname{ch}^\mathrm{H}_b\left( 2\boldsymbol{\operatorname{p}}\right)
\end{align*}
All the contributions from $H^2(S,\mathbb{Q}) \otimes H^2(S,\mathbb{Q})$ cause no problem, since they only contains descendants of the form $\ch{i}{L}$ with $i\in \mathbb{N},L\in H^2(S,\Q)$ and therefore by lemma \ref{claim}, their contributions remain the same.

For the first term, one can use the relation \eqref{relation} to rewrite it as:
\begin{align}
\label{firstterm}
\sum_{a+b=k}&\bigg(a!b!\bigg[\operatorname{ch}_{a}^\mathrm{H}(\mathbbm{1}+\boldsymbol{\operatorname{p}})-\operatorname{ch}^\mathrm{H}_{a}(\boldsymbol{\operatorname{p}})\bigg]\operatorname{ch}^\mathrm{H}_b(\boldsymbol{\operatorname{p}})\\&\quad\quad\quad\quad+ a!b!\operatorname{ch}^\mathrm{H}_a(\boldsymbol{\operatorname{p}})\bigg[\operatorname{ch}^\mathrm{H}_b(\mathbbm{1}+\boldsymbol{\operatorname{p}})-\operatorname{ch}^\mathrm{H}_b(\boldsymbol{\operatorname{p}})\bigg]\bigg).
\end{align}
Using lemma \ref{claim} to pass from integral over $M(v)$ to $S^{[N]}$, \eqref{firstterm} becomes:
\begin{align}
\label{star}
     \sum_{a+b=k}&\bigg(a!b!\bigg[\frac{1}{r}\operatorname{ch}_{a}^\mathrm{H}(\mathbbm{1}+\boldsymbol{\operatorname{p}})-r\operatorname{ch}^\mathrm{H}_{a}(\boldsymbol{\operatorname{p}})\bigg]r\operatorname{ch}^\mathrm{H}_b(\boldsymbol{\operatorname{p}})\\ &\quad\quad\quad\quad + a!b!r\operatorname{ch}^\mathrm{H}_a(\boldsymbol{\operatorname{p}})\bigg[\frac{1}{r}\operatorname{ch}^\mathrm{H}_b(\mathbbm{1}+\boldsymbol{\operatorname{p}})-r\operatorname{ch}^\mathrm{H}_b(\boldsymbol{\operatorname{p}})\bigg]\bigg)\\
     &=\sum_{a+b=k} \bigg( 2a!b!(1-r^2)\operatorname{ch}^\mathrm{H}_{a}(\boldsymbol{\operatorname{p}})\operatorname{ch}^\mathrm{H}_{b}(\boldsymbol{\operatorname{p}}) \bigg)\\
    &\quad +\sum_{a+b=k}\bigg(a!b!\operatorname{ch}^\mathrm{H}_a(\mathbbm{1})\operatorname{ch}^\mathrm{H}_b(\boldsymbol{\operatorname{p}})+ a!b!\operatorname{ch}^\mathrm{H}_a(\boldsymbol{\operatorname{p}})\operatorname{ch}^\mathrm{H}_b(\mathbbm{1})\bigg). 
\end{align}
Next, consider the term $\sum_{a+b=k} a!b!\operatorname{ch}^\mathrm{H}_a\operatorname{ch}^\mathrm{H}_b\left( 2\boldsymbol{\operatorname{p}}\right)$, when passing from integral over $M(v)$ to $S^{[N]}$ using lemma \ref{claim}, this term becomes:
\begin{align}
\label{star2}
    \sum_{a+b=k} 2a!b!r^2&\operatorname{ch}^\mathrm{H}_a(\boldsymbol{\operatorname{p}})\operatorname{ch}^\mathrm{H}_b( \boldsymbol{\operatorname{p}})\\
    &=\sum_{a+b=k} 2a!b!\operatorname{ch}^\mathrm{H}_a(\boldsymbol{\operatorname{p}})\operatorname{ch}^\mathrm{H}_b( \boldsymbol{\operatorname{p}})+ \sum_{a+b=k} 2a!b!(r^2-1)\operatorname{ch}^\mathrm{H}_a(\boldsymbol{\operatorname{p}})\operatorname{ch}^\mathrm{H}_b( \boldsymbol{\operatorname{p}})
\end{align}
Therefore \eqref{star}, \eqref{star2} and the fact that other künneth component cause no problems imply \eqref{claim2}.

Now, let us look at the $S_k$ term.
The classes in $H^{0,q}(S)$ are $\mathbbm{1}\in H^{0,0}(S)$ and $\sigma\in H^{0,2}(S)$. There is only one such $\sigma$ up to scaling, since $\text{dim }H^{0,2}(S)=1$. Let us first look at the term 
\[ -\frac{1}{r} (k+1)! R_{-1}[\mathbbm{1}] \left(\operatorname{ch}_{k+1}^\mathrm{H}(\boldsymbol{\operatorname{p}}) D\right). \]
Evidently, when passing from integral over $M(v)$ to $S^{[N]}$, this term changes to 
\[-r^{N_p}(k+1)! R_{-1}[\mathbbm{1}] \left(\operatorname{ch}^\mathrm{H}_{k+1}(\boldsymbol{\operatorname{p}}) D\right).\]
Now consider the term 
\begin{align}
\label{sk}
    -\frac{1}{r} (k+1)! R_{-1}[\sigma] \left(\operatorname{ch}_{k+1}^\mathrm{H}(\overline{\sigma}) D\right), 
\end{align} 
where $\overline{\sigma}\cdot \sigma =\boldsymbol{\operatorname{p}}$. Notice that for $\delta\in H^\ast(S)$, $\sigma\cdot\delta\neq 0\implies \delta=\overline{\sigma} \text{ or } \mathbbm{1}$ or a linear combination of those. Because we assumed that $\operatorname{ch}_i(\mathbbm{1})$ does not appear in $D$, the terms that survive in \eqref{sk} are terms produced when $R_{-1}[\sigma]$ acts on different $\operatorname{ch}_{i}^\mathrm{H}(\overline{\sigma})$'s or on different $\operatorname{ch}_{i}^\mathrm{H}(\mathbbm{1}+\boldsymbol{\operatorname{p}})$'s. In the first case, it will create an additional $\operatorname{ch}_{i-1}(\boldsymbol{\operatorname{p}})$; in the second case, it will destroy an $\operatorname{ch}_{i}^\mathrm{H}(\mathbbm{1}+\boldsymbol{\operatorname{p}})$, in either case it will produce a factor $r$. Thus when passing from integral over $M(v)$ to $S^{[N]}$, \eqref{sk} changes to 
\[-r^{N_p}(k+1)! R_{-1}[\sigma] \operatorname{ch}^\mathrm{H}_{k+1}(\overline{\sigma}) D.\]
Therefore we find:
\begin{align*}
    \int_{M(v)}S_k D =r^{N_p}\int_{S^{[N]}}S_k D.
\end{align*}
Because the Virasoro constraints on $S^{[N]}$ are proven \cite{Moreira_2022}, the Virasoro constraints on $M(v)$ are also valid. This proves Theorem \ref{mainthm} for $\operatorname{dim} M>2$.

\subsection{$\operatorname{dim} M =2$ case}
As before, let $S$ be a non-singular projective K3 surface, let $v$ be a Mukai vector with $\operatorname{rk}(v)>0$.
In this case, I will recall some facts following Section 2.4 of \cite{Oberdieck_2022}. Let $M:=M_H(v)$ be a 2-dimensional moduli space of stable sheaves, hence it is a K3 surface. There is the following well known isometry of Mukai lattices:
\begin{align}
\widetilde{\Phi}:H^\ast(S,\mathbb{Q})\to H^\ast(M,\mathbb{Q}),\quad \gamma\mapsto\pi_{M\ast}(e^{-c_1(\mathcal{F})/rk(v)} v(F) \pi_S^\ast (\gamma)),
\end{align}
where $v(\mathcal{F}):= \operatorname{ch}(\mathcal{F})\sqrt{\operatorname{td}_{M\times S}}.$ The fact that $\widetilde{\Phi}$ is a Hodge isometry implies:
\begin{align*}
\widetilde{\Phi}(\boldsymbol{\operatorname{p}})=\mathrm{rk}(v)\mathbbm{1},\quad \widetilde{\Phi}(L)=\phi(L),\quad \widetilde{\Phi}(\mathbbm{1})=\frac{1}{\mathrm{rk}(v)} \boldsymbol{\operatorname{p}},
\end{align*}
where $\phi:H^2(S,\mathbb{Q})\to H^2(M,\mathbb{Q}) $ is a Hodge isometry, which means $\phi$ is an isometry with respect to the Mukai paring of $S$ and $M$, since $M$ is also a K3 surface.  

One has the following identities by direct calculation:
\begin{align}
    \operatorname{ch}_i^\mathrm{H}(\gamma) &= \bigg[\frac{1}{\sqrt{\operatorname{td}_M}}\widetilde{\Phi} \left( \frac{\gamma}{\sqrt{\operatorname{td}_S}} \right)\bigg]_{2i-p+q} \text{for }\gamma\in H^{p,q}(S).\\
    \text{ch}_i(\gamma) &= \bigg[\frac{1}{\sqrt{\operatorname{td}_M}}\widetilde{\Phi} \left( \frac{\gamma}{\sqrt{\operatorname{td}_S}} \right)\bigg]_{2\operatorname{deg}(\gamma)+2i-4}\text{for }\gamma\in H^{\ast}(S,\mathbb{Q}).
\end{align}
Here the new symbole $\operatorname{ch}_k(\gamma)$ has its geometric realization as:
\[\operatorname{ch}_k(\gamma) = \pi_{M\ast}\left( \operatorname{ch}_k(\mathcal{F}\otimes(\operatorname{det}\mathcal{F})^{-1/r})\cdot \pi_S^\ast \gamma \right).\]For $\gamma\in H^{p,q}(S)$ one has: 
\begin{align}
\label{rmkH}
    \ch{i}{\gamma}=\operatorname{ch}_{i+1+\frac{q-p}{2}}(\gamma).
\end{align}

Similarly as before:
\begin{align*}
\operatorname{ch}_i^\mathrm{H}(\mathbbm{1}+\boldsymbol{\operatorname{p}})= \bigg[\frac{1}{\sqrt{\operatorname{td}_M}}\widetilde{\Phi} \left( \frac{\mathbbm{1}+\boldsymbol{\operatorname{p}}}{\sqrt{\operatorname{td}_S}} \right)\bigg]_{2i} =\operatorname{ch}_i^\mathrm{H}(\mathbbm{1})+\operatorname{ch}_i^\mathrm{H}(\boldsymbol{\operatorname{p}}) .
\end{align*}

Because $\text{dim } M = 2$, one can explicitly calculate the cohomology class of $\operatorname{ch}^\mathrm{H}_i(\gamma)$. The non zero ones among them are:
\begin{align}
\begin{split}
    \operatorname{ch}_0^\mathrm{H}(\boldsymbol{\operatorname{p}}) &= \text{rk}(v)\mathbbm{1},\quad \\\operatorname{ch}_2^\mathrm{H}(\boldsymbol{\operatorname{p}})& = -\text{rk}(v)\boldsymbol{\operatorname{p}},\\
    \operatorname{ch}_2(\gamma)&=\phi(\gamma),\quad \text{where } \gamma\in H^{2}(S)\\
   % \operatorname{ch}_0(\sigma)^\mathrm{H}&=-\widetilde{\Phi}(\sigma),\quad \text{where } \sigma\in H^{0,2}(S)\\
   % \operatorname{ch}_2(\overline{\sigma})^\mathrm{H}&=-\widetilde{\Phi}(\overline{\sigma}),\quad \text{where } \overline{\sigma}\in H^{2,0}(S)\\
    \operatorname{ch}_2^\mathrm{H}(\mathbbm{1})&=-\text{rk}(v)\mathbbm{1} \\
    \operatorname{ch}_4^\mathrm{H}(\mathbbm{1})&=\left(\frac{1}{\text{rk}(v)}+\text{rk}(v)\right)\boldsymbol{\operatorname{p}}\\
    \operatorname{ch}^\mathrm{H}_2(\mathbbm{1}+\boldsymbol{\operatorname{p}})&=\frac{1}{\text{rk}(v)}\boldsymbol{\operatorname{p}}.
\end{split}
\end{align}

In fact, the geometric realization of $\ch{i}{\boldsymbol{\operatorname{p}}}$, $\ch{i}{\mathbbm{1}}$ are the same as $\operatorname{ch}_{i}(\boldsymbol{\operatorname{p}})$, $\operatorname{ch}_{i}(\mathbbm{1})$. 
 For $\gamma\in H^2(S,\C)$, using $\operatorname{ch}_2(\gamma)$ will make notation simpler since only $\operatorname{ch}_2(\gamma)\neq 0$ whereas there are potentially several $i$ such that $\ch{i}{\gamma}\neq 0$.

\begin{lemma}
\label{dimM=2}
Let $S$ be a non-singular and projective K3 surface, and let $v,M_H(v)$ be defined as above. Suppose $M_H(v)$ is 2 dimensional, and suppose $r:=\operatorname{rk}(v)>0$. Then
\[\int_{M} P\left(\operatorname{ch}^\mathrm{H}_i(\boldsymbol{\operatorname{p}}),\operatorname{ch}^\mathrm{H}_j(\gamma_l),\operatorname{ch}^\mathrm{H}_k(\mathbbm{1}+\boldsymbol{\operatorname{p}})\right) = \int_{S^{[1]}} P\left(r\operatorname{ch}^\mathrm{H}_i(\boldsymbol{\operatorname{p}}),\operatorname{ch}^\mathrm{H}_j(\gamma_l),\frac{1}{r}\operatorname{ch}^\mathrm{H}_k(\mathbbm{1}+\boldsymbol{\operatorname{p}})\right),\]
where $P$ is a polynomial and $\gamma_l\in H^2(S,\mathbb{Q})$. 
\end{lemma}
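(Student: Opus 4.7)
The plan is to reduce the identity to a direct comparison of cohomology classes on two K3 surfaces, since both $M$ and $S^{[1]}\cong S$ are smooth projective K3's with rational cohomology concentrated in degrees $0,2,4$. First I would build on the Hodge isometry $\phi:H^2(S,\Q)\to H^2(M,\Q)$ coming from $\widetilde{\Phi}$ to define a degree-preserving linear isomorphism $\psi:H^\ast(M,\Q)\to H^\ast(S,\Q)$ by
\[
\psi(\mathbbm{1}_M)=\mathbbm{1}_S,\qquad \psi(\phi(L))=L\ \text{for}\ L\in H^2(S,\Q),\qquad \psi(\boldsymbol{\operatorname{p}}_M)=\boldsymbol{\operatorname{p}}_S.
\]
The first thing to check is that $\psi$ is a ring isomorphism; the only nontrivial piece is $\psi(\phi(L_1)\phi(L_2))=L_1L_2$, which follows from $\phi$ being an isometry on $H^2$ for the cup product pairing. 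This is a direct consequence of the fact that the Mukai pairing restricts to the cup product pairing on $H^2\times H^2$. By construction $\psi$ sends the top class to the top class, so $\int_S\psi(\alpha)=\int_M\alpha$ for every $\alpha\in H^4(M)$.

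Next I would verify on each generator appearing in the polynomial $P$ that $\psi$ carries the descendant on $M$ to the correspondingly rescaled descendant on $S^{[1]}$. Using the explicit formulas collected just before the lemma, and noting that these formulas specialize on $S^{[1]}$ (where $r=1$ and $\phi$ becomes the identity) to the standard values, one verifies that on $M$
\[
\psi(\ch{i}{\boldsymbol{\operatorname{p}}})=r\,\ch{i}{\boldsymbol{\operatorname{p}}}\big|_{S^{[1]}},\quad \psi(\ch{j}{\gamma_l})=\ch{j}{\gamma_l}\big|_{S^{[1]}},\quad \psi(\ch{k}{\mathbbm{1}+\boldsymbol{\operatorname{p}}})=\tfrac{1}{r}\,\ch{k}{\mathbbm{1}+\boldsymbol{\operatorname{p}}}\big|_{S^{[1]}},
\]
for the finitely many indices yielding nonzero classes (all others vanish on both sides). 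Each check is a one-line computation: for instance $\ch{0}{\boldsymbol{\operatorname{p}}}|_M=r\mathbbm{1}_M\mapsto r\mathbbm{1}_S=r\cdot\ch{0}{\boldsymbol{\operatorname{p}}}|_{S^{[1]}}$, and $\ch{2}{\mathbbm{1}+\boldsymbol{\operatorname{p}}}|_M=\tfrac{1}{r}\boldsymbol{\operatorname{p}}_M\mapsto\tfrac{1}{r}\boldsymbol{\operatorname{p}}_S=\tfrac{1}{r}\cdot\ch{2}{\mathbbm{1}+\boldsymbol{\operatorname{p}}}|_{S^{[1]}}$.

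Since $\psi$ is a ring homomorphism matching each generator to its rescaled counterpart, applying $\psi$ to the polynomial class $P\bigl(\ch{i}{\boldsymbol{\operatorname{p}}},\ch{j}{\gamma_l},\ch{k}{\mathbbm{1}+\boldsymbol{\operatorname{p}}}\bigr)$ on $M$ produces exactly the class $P\bigl(r\,\ch{i}{\boldsymbol{\operatorname{p}}},\ch{j}{\gamma_l},\tfrac{1}{r}\ch{k}{\mathbbm{1}+\boldsymbol{\operatorname{p}}}\bigr)$ on $S^{[1]}$. Combining this with $\int_M=\int_S\circ\,\psi$ on $H^4(M)$ yields the asserted equality. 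I anticipate no substantive obstacle: the only nontrivial ingredient is that $\phi$ is an isometry for the cup product on $H^2$, which is built into the Hodge isometry property of $\widetilde{\Phi}$. The factors $r$ and $1/r$ in the lemma are precisely what is needed to absorb the explicit rank-dependence in the $M$-side formulas for $\ch{i}{\boldsymbol{\operatorname{p}}}$ and $\ch{k}{\mathbbm{1}+\boldsymbol{\operatorname{p}}}$, so that after rescaling they become the unit-rank values on $S^{[1]}$.
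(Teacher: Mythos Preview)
Your argument is correct. Both your proof and the paper's rest on the same explicit formulas for the non-vanishing descendants on $M$ (namely $\ch{0}{\boldsymbol{\operatorname{p}}}=r\mathbbm{1}$, $\ch{2}{\boldsymbol{\operatorname{p}}}=-r\boldsymbol{\operatorname{p}}$, $\operatorname{ch}_2(\gamma)=\phi(\gamma)$, $\ch{2}{\mathbbm{1}+\boldsymbol{\operatorname{p}}}=\tfrac{1}{r}\boldsymbol{\operatorname{p}}$) together with the fact that $\phi$ is an isometry for the cup-product pairing on $H^2$.

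The organization differs. The paper enumerates the three possible shapes of monomials with nonzero integral, evaluates each explicitly on $M$ and on $S^{[1]}$, and compares. You instead package the comparison into a single degree-preserving ring isomorphism $\psi:H^\ast(M,\Q)\to H^\ast(S,\Q)$ (identity on $H^0$ and $H^4$, $\phi^{-1}$ on $H^2$) that preserves the integral and sends each generator to its rescaled counterpart, so that multiplicativity handles all monomials at once. Your route is cleaner and avoids the case enumeration; the paper's route makes the three nonzero integrals completely explicit. The mathematical content is the same, and your ring-isomorphism check that $\psi(\phi(L_1)\phi(L_2))=L_1L_2$ is precisely the paper's use of $(\phi(\gamma_1),\phi(\gamma_2))=(\gamma_1,\gamma_2)$.
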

One can use \eqref{rmkH} to rewrite $\operatorname{ch}^\mathrm{H}_i(\gamma)$ as $\operatorname{ch}_2(\gamma)$. 
The only monomials whose integral on $M$ are non zero have one of the following three forms (up to multiplication by a constant):
\begin{align}
&\big[\operatorname{ch}_0^\mathrm{H}(\boldsymbol{\operatorname{p}})\big]^N\operatorname{ch}_2(\gamma_1) \operatorname{ch}_2(\gamma_2)\\
&\big[\operatorname{ch}^\mathrm{H}_0(\boldsymbol{\operatorname{p}})\big]^N\operatorname{ch}_2^\mathrm{H}(\boldsymbol{\operatorname{p}})\\
&\big[\operatorname{ch}_0^\mathrm{H}(\boldsymbol{\operatorname{p}})\big]^N\operatorname{ch}^\mathrm{H}_2(\mathbbm{1}+\boldsymbol{\operatorname{p}}),
\end{align}
where $N$ is some non-negative integer and $\gamma_1,\gamma_2\in H^2(S,\C)$. 
Their integral over $M$ are:
\begin{align}
\int_{M} \big[\operatorname{ch}_0^\mathrm{H}(\boldsymbol{\operatorname{p}})\big]^N\operatorname{ch}_2(\gamma_1) \operatorname{ch}_2(\gamma_2)&=r^N\cdot (\phi(\gamma_1),\phi(\gamma_2))\\
&=r^N\cdot (\gamma_1,\gamma_2)\\
\int_{M}\big[\operatorname{ch}_0^\mathrm{H}(\boldsymbol{\operatorname{p}})\big]^N\operatorname{ch}_2^\mathrm{H}(\boldsymbol{\operatorname{p}})&=-r^{N+1}\\
\int_{M}\big[\operatorname{ch}_0^\mathrm{H}(\boldsymbol{\operatorname{p}})\big]^N\operatorname{ch}_2^\mathrm{H}(\mathbbm{1}+\boldsymbol{\operatorname{p}})&=r^{N-1}.
\end{align}
One can also evaluate integrals on $S^{[1]}$:
\begin{align}
\int_{S^{[1]}} \big[r\operatorname{ch}_0^\mathrm{H}(\boldsymbol{\operatorname{p}})\big]^N\operatorname{ch}_2(\gamma_1) \operatorname{ch}_2(\gamma_2)&=r^N\cdot (\phi(\gamma_1),\phi(\gamma_2))\\
&=r^N\cdot (\gamma_1,\gamma_2)\\
\int_{S^{[1]}}\big[r\operatorname{ch}_0^\mathrm{H}(\boldsymbol{\operatorname{p}})\big]^N\big[r\operatorname{ch}_2^\mathrm{H}(\boldsymbol{\operatorname{p}})\big]&=-r^{N+1}\\
\int_{S^{[1]}}\big[r\operatorname{ch}_0^\mathrm{H}(\boldsymbol{\operatorname{p}})\big]^N\cdot\frac{1}{r}\operatorname{ch}^\mathrm{H}_2(\mathbbm{1}+\boldsymbol{\operatorname{p}})&=r^{N-1}.
\end{align}
By comparing the two evaluations, lemma \ref{dimM=2} is proven. Therefore one can use the same argument as in subsection \ref{bigger2} to show the Virasoro constraints are valid in case of $\text{dim } M=2$. Combined with the result of previous subsection, Theorem \ref{mainthm} is proven.

\section{Rank 0 case}
\label{rankzero}
The Virasoro operator in the section \ref{positiverank} only works for the strictly positive rank case, since the definition of $S_k$ has the constant $\frac{1}{r}$. To solve this, one needs the concept of $\delta$-normalized universal sheaf \cite{bojko2022virasoro} which will be presented in the following subsection.  
\subsection{$\delta$-normalized universal sheaf and the invariant Virasoro operator}

\begin{comment}
I will use the notations in \cite{bojko2022virasoro}. The descendants will be denoted by $\operatorname{ch}_i^\mathrm{H}(\gamma)$, and the geometric realization with respect to a universal sheaf $\mathcal{F}$ is defined as:
\begin{align}
    \xi_{\mathcal{F}}(\operatorname{ch}_i^\mathrm{H}(\gamma)) = p_\ast \left( \operatorname{ch}_{i+\text{dim}(X)-r}(\mathcal{F})q^\ast \gamma \right),
\end{align}
where $\gamma\in H^{r,s}(X)$. 
\end{comment}
 Let me recall the notion of $\delta$-normalized universal sheaf in \cite{bojko2022virasoro}. Let $v\in \Lambda$ $M:=M_H(v)$ and $\pi_M,\pi_S$ being defined as above. And I assume that there exists a universal sheaf on $M\times S$. Let $\alpha\in K_0(X)_{\mathbb{Q}}$ be the topological type of sheaves that $M$ parameterizes, i.e. one has $\operatorname{ch}(\alpha)\cdot \sqrt{\text{td}_S} = v$.
Suppose $\delta\in \mathrm{H}^\bullet(S,\mathbb{Z})$ is an algebraic class such that $\int_S \delta\cdot \operatorname{ch}(\alpha)\neq 0$. We say a universal sheaf $\mathcal{G}$ is $\delta$-normalized if 
\[\xi_{\mathcal{G}}(\operatorname{ch}_1^\mathrm{H}(\delta))=0.\] 
By the remark 2.14 of \cite{bojko2022virasoro}, the $\delta$-normalized universal sheaf always exists and is unique as an element of the rational K-theory of $M\times X$.
Let $\mathcal{F}$ be any universal sheaf, its unique $\delta$-normalized universal sheaf is:
\begin{align}
    \mathcal{F}_\delta = \mathcal{F}\otimes e^{-\xi_{\mathcal{F}}(\operatorname{ch}^\mathrm{H}_1(\delta))/\int_S \delta\cdot \operatorname{ch}(\alpha)}.
\end{align}
Also by the same remark, $\xi_{\mathbb{G}_\delta}=\xi_{\mathbb{G}}\circ \eta$ where $\eta:\mathbb{D}^S\to\mathbb{D}^S$ is defined as:
\[\eta = \sum_{j\geq 0} \left( -\frac{\ch{1}{\delta}}{\int_S\delta\cdot\operatorname{ch}(\alpha)} \right)^j R^j_{-1}.\]Therefore, the geometric realization with respect to $\delta$-normalized sheaf is still valid even such a sheaf does not exist in usual sense. By direct calculation one can check that $(\mathcal{F}\otimes\mathcal{L})_\delta =\mathcal{F}_\delta $ for any line bundle $\mathcal{L}$ on $M$. Since we want that the geometric realisation of any $D\in \mathbb{D}^S$ is independent of choice of the universal sheaf, $\delta$-normalised universal sheaf will be a good choice to formulate the Virasoro constraint. For rank zero sheaves, $\boldsymbol{\operatorname{p}}$-normalised universal sheaf is clearly not possible, since $\int_S \boldsymbol{\operatorname{p}}\cdot \operatorname{ch}(\alpha)=0$. A good choice, in this case, would be a $Y$-normalised universal sheaf, for some $Y\in H^{1,1}(S)$, which will be used in the next subsection.

Now, as in previous sections, let $v=(r,d,n)$ with $r>0$. 
For later use, let me introduce the following notations:
\begin{align}
    L_k&:=R_k+T_k\\
    S_{k,0}&:=-\frac{(k+1)!}{r}R_{-1}\operatorname{ch}_{k+1}^\mathrm{H}(\boldsymbol{\operatorname{p}})\\
    S_{k,2}&:=-\frac{(k+1)!}{r}R_{-1}[\sigma]\operatorname{ch}_{k+1}^\mathrm{H}(\overline{\sigma}), 
\end{align}
Where $\sigma\in H^{0,2}(S)$ with $S$ a K3 surface and $\sigma\cdot\overline{\sigma}=\boldsymbol{\operatorname{p}}$.
As before, for a $D\in\mathbb{D}^S$, \[S_{k,0}(D)=-\frac{(k+1)!}{r}R_{-1}(\operatorname{ch}_{k+1}^\mathrm{H}(\boldsymbol{\operatorname{p}})D),\] similar for $S_{k,2}$.
Also, one has $S_k=S_{k,0}+S_{k,2}$.

\begin{proposition}
As before, let $\mathcal{L}_k=R_k+T_k+S_k$. The commutation relation
    $[R_{-1},\mathcal{L}_k]=(k+1) \mathcal{L}_{k-1}$ is true.
\end{proposition}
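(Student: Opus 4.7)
\emph{Proof plan.} The strategy is to decompose $\mathcal{L}_k = R_k + T_k + S_k$ and compute the three commutators $[R_{-1}, R_k]$, $[R_{-1}, T_k]$, and $[R_{-1}, S_k]$ separately, showing each equals $(k+1)$ times the corresponding piece at index $k-1$. Summing will then give $(k+1)\mathcal{L}_{k-1}$. The edge cases follow from the conventions $\operatorname{ch}_{-1}^{\mathrm{H}} = 0$ and $T_{-1} = 0$ (empty sum).

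For $[R_{-1}, R_k]$: both $R_{-1}$ and $R_k$ are derivations of $\mathbb{D}^S$, so the commutator is a derivation and it suffices to evaluate on a generator. A direct computation gives
\[
[R_{-1}, R_k](\operatorname{ch}_i^{\mathrm{H}}(\gamma)) = \left(\prod_{j=0}^k (i+j) - \prod_{j=0}^k (i-1+j)\right) \operatorname{ch}_{i+k-1}^{\mathrm{H}}(\gamma),
\]
and factoring out the common $\prod_{j=0}^{k-1}(i+j)$ reduces the bracket to $(i+k) - (i-1) = k+1$. Hence $[R_{-1}, R_k] = (k+1) R_{k-1}$.

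For $[R_{-1}, T_k]$: since $T_k$ is multiplication by a fixed element and $R_{-1}$ is a derivation, one obtains $[R_{-1}, T_k](D) = R_{-1}(T_k) \cdot D$. Applying Leibniz to each term $a!b!\, \operatorname{ch}_a^{\mathrm{H}}(\gamma_t^L) \operatorname{ch}_b^{\mathrm{H}}(\gamma_t^R)$ with $a+b=k$ produces two contributions, shifting $a$ or $b$ down by one. Regrouping under $a+b = k-1$ and using $(a+1)!b! + a!(b+1)! = (k+1)\, a!b!$ yields $R_{-1}(T_k) = (k+1) T_{k-1}$.

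For $[R_{-1}, S_k]$: the crucial point is that the derivations $R_{-1} = R_{-1}[\mathbbm{1}]$ and $R_{-1}[\delta_i^L]$ commute---they agree on every generator $\operatorname{ch}_j^{\mathrm{H}}(\gamma)$, both sending it to $\operatorname{ch}_{j-2}^{\mathrm{H}}(\delta_i^L \gamma)$ after two applications in either order, and both are derivations. Using this I would pull $R_{-1}$ through $R_{-1}[\delta_i^L]$ in $R_{-1}S_k(D)$ and expand by Leibniz; the term containing $R_{-1}(D)$ cancels $S_k R_{-1}(D)$, and what remains is
\[
-\frac{(k+1)!}{r} \sum_i R_{-1}[\delta_i^L]\bigl(\operatorname{ch}_k^{\mathrm{H}}(\delta_i^R) D\bigr) = (k+1) S_{k-1}(D),
\]
using $(k+1)!/k! = k+1$. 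Summing the three identities proves the proposition. I do not anticipate a genuine obstacle here---the computation is purely combinatorial---but the factorial bookkeeping for $T_k$ and the commutation of $R_{-1}$ with $R_{-1}[\delta_i^L]$ in the $S_k$ step are the two places that demand the most care.
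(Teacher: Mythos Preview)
Your proof is correct and follows essentially the same approach as the paper. The only difference is that the paper outsources the identity $[R_{-1}, L_k] = (k+1)L_{k-1}$ (i.e.\ the $R_k$ and $T_k$ pieces) to \cite{bojko2022virasoro} and only spells out the $S_k$ computation, whereas you verify all three pieces directly; your $S_k$ argument---commuting $R_{-1}$ past $R_{-1}[\delta_i^L]$, applying Leibniz, and cancelling---is exactly the paper's.
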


\begin{proof}
    The relation $[R_{-1},L_k]=(k+1) L_{k-1}$ is already checked in \cite{bojko2022virasoro}. Let us calculate $[R_{-1},S_k]$, using $\sum _{t}\delta_t^L\otimes \delta_t^R$ as the K\"unneth decomposition of $\Delta_\ast \mathbbm{1}\in H^\ast(S\times S)$ :
\begin{align*}
    [R_{-1},S_k] D &= R_{-1}S_kD-S_kR_{-1}D\\
    &=-\frac{(k+1)!}{r}\sum_{\{t|p^L_t=0\}}R_{-1}[\delta^L_t]\bigg(\operatorname{ch}_{k}^\mathrm{H}(\delta^R_t)D+ \operatorname{ch}_{k+1}^\mathrm{H}(\delta^R_t)R_{-1}D \bigg) \\
    &\quad\quad-S_kR_{-1}D\\
    &=-\frac{(k+1)!}{r}\sum_{\{t|p^L_t=0\}}R_{-1}[\delta^L_t]\bigg(\operatorname{ch}_{k}^\mathrm{H}(\delta^R_t)D \bigg)\\
    &=(k+1)S_{k-1}D,
\end{align*}
where in the second line, I used that $R_{-1}$ commutes with $R_{-1}[\gamma]$.
Therefore I have $[R_{-1},\mathcal{L}_k]=(k+1) \mathcal{L}_{k-1}$. 
\end{proof}

Define the following invariant Virasoro operator as in \cite{bojko2022virasoro}.
\begin{align}
\label{LinvDef}
    \mathcal{L}_\text{inv}:=\sum_{j\geq -1}\frac{(-1)^j}{(j+1)!}\mathcal{L}_jR^{j+1}_{-1}
\end{align}
Using the commutation relation\begin{align}
    [R_{-1},\mathcal{L}_k]=(k+1)\mathcal{L}_{k-1}
\end{align} 
one has:
\begin{align}
    R_{-1}\mathcal{L}_\text{inv} = \sum_{j\geq -1}\frac{(-1)^j}{(j+1)!}\mathcal{L}_jR^{j+2}_{-1}+ \sum_{j\geq -1}\frac{(-1)^j}{(j+1)!}(j+1)\mathcal{L}_{j-1}R^{j+1}_{-1}=0.
\end{align}
Therefore the geometric realization of $\mathcal{L}_\text{inv}D, D\in \mathbb{D}^S$ does not depend on the choice of universal sheaf, by lemma 2.8 of \cite{bojko2022virasoro}. More precisely, this means the following: given two universal sheaf $\mathcal{F}$ and $\mathcal{F}'=\mathcal{F}\otimes \pi_M^\ast L$ where $L$ is a line bundle on $M$, then 
\[\xi_\mathcal{F}(\mathcal{L}_{\text{inv}}D)= \xi_{\mathcal{F}'}(\mathcal{L}_{\text{inv}}D)\quad \forall D\in \mathbb{D}^S.\]  
Therefore I could omit the geometric realization and write 
\[\int_M \mathcal{L}_{\text{inv}}D.\]

In the positive rank case, I have used $\mathcal{F}\otimes \det(\mathcal{F})^{-1/r}$ in the geometric realization, which is not a universal sheaf. One can adapt lemma 2.19 of \cite{bojko2022virasoro} to show that the Virasoro constraint is equivalent when using $\mathcal{F}\otimes \det(\mathcal{F})^{-1/r}$ or $\mathcal{F}\otimes \det(\mathcal{F})^{-1/r}\otimes\pi_S^\ast\Delta^{1/r}$, where $\Delta\in \mathrm{Pic}(S)$ such that $c_1(\Delta)=c_1(v)$. Also one can check that the latter sheaf is a $\boldsymbol{\operatorname{p}}$-normalised universal sheaf. 

\begin{proposition}
\label{5.2}
    \[\int_{M} \xi_{\mathcal{F}\otimes \det(\mathcal{F})^{-1/r}} \left(\mathcal{L}_k D\right) = 0 \quad\forall k\geq -1, \forall D \in \mathbb{D}^S\]
    \[\iff\int_{M} \xi_{\mathcal{F}\otimes \det(\mathcal{F})^{-1/r}\otimes\pi_S^\ast\Delta^{1/r}} \left(\mathcal{L}_k D\right) = 0 \quad\forall k\geq -1, \forall D \in \mathbb{D}^S,\]
    where $\Delta\in \operatorname{Pic}(S)$ such that $c_1(\Delta)=c_1(v)$. Also $\mathcal{F}\otimes \det(\mathcal{F})^{-1/r}\otimes\pi_S^\ast\Delta^{1/r}$ is a $\boldsymbol{p}$-normalised sheaf.
\end{proposition}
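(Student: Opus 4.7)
The proof has two distinct parts: the equivalence of the two Virasoro constraints, and the verification that the twisted sheaf is $\boldsymbol{\operatorname{p}}$-normalized. I would handle the normalization first, since it is a direct Chern-class calculation, and then reduce the equivalence to an adaptation of Lemma 2.19 of \cite{bojko2022virasoro}.

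\textbf{Normalization.} Set $\mathcal{G} := \mathcal{F}\otimes\det(\mathcal{F})^{-1/r}\otimes\pi_S^*\Delta^{1/r}$. Since $\mathcal{F}$ has rank $r$, the rational-K-theoretic rules give $c_1(\mathcal{F}\otimes\det(\mathcal{F})^{-1/r}) = c_1(\mathcal{F}) - c_1(\det\mathcal{F}) = 0$, and then tensoring with $\pi_S^*\Delta^{1/r}$ adds $\pi_S^* c_1(\Delta) = \pi_S^* c_1(v)$, so $c_1(\mathcal{G}) = \pi_S^* c_1(v)$. Therefore
\[
\xi_\mathcal{G}(\operatorname{ch}_1^\mathrm{H}(\boldsymbol{\operatorname{p}})) = \pi_{M*}\!\bigl(c_1(\mathcal{G})\cdot\pi_S^*\boldsymbol{\operatorname{p}}\bigr) = \pi_{M*}\!\bigl(\pi_S^*(c_1(v)\cdot\boldsymbol{\operatorname{p}})\bigr) = 0,
\]
since $c_1(v)\cdot\boldsymbol{\operatorname{p}}\in H^6(S)=0$. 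This gives the $\boldsymbol{\operatorname{p}}$-normalization.

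\textbf{Reduction of the equivalence to a pullback twist.} The key observation is that the two rational-K-theory classes differ only by a pullback from $M$. Since $H^1(S)=0$ on the K3 surface, the Künneth decomposition of $c_1(\mathcal{F})$ splits cleanly as $c_1(\mathcal{F})=\pi_M^*\theta_\mathcal{F}(\boldsymbol{\operatorname{p}})+\pi_S^* c_1(v)$. Accordingly, in the rational Picard group we may write $\det\mathcal{F}\cong\pi_M^* L_M\otimes\pi_S^*\Delta$ with $c_1(L_M)=\theta_\mathcal{F}(\boldsymbol{\operatorname{p}})$, and thus
\[
\det(\mathcal{F})^{-1/r}\otimes\pi_S^*\Delta^{1/r}\;\cong\;\pi_M^* L_M^{-1/r}.
\]
Consequently $\mathcal{G}\cong\bigl(\mathcal{F}\otimes\det(\mathcal{F})^{-1/r}\bigr)\otimes\pi_M^* L_M^{-1/r}$, i.e.\ the second twisted universal sheaf is obtained from the first by tensoring with a rational line bundle pulled back from $M$.

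\textbf{Application of the adapted BLM Lemma 2.19.} Lemma 2.19 of \cite{bojko2022virasoro} asserts that the Virasoro constraint $\int_M\xi_{\mathcal{F}'}(\mathcal{L}_k D)=0$ for all $k\geq -1$ and $D\in\mathbb{D}^S$ is preserved under replacing $\mathcal{F}'$ by $\mathcal{F}'\otimes\pi_M^* N$, for any rational line bundle $N$ on $M$. The mechanism is that the induced transformation of descendents
\[
\xi_{\mathcal{F}'\otimes\pi_M^*N}(\operatorname{ch}_i^\mathrm{H}(\gamma))=\sum_{l\geq 0}\frac{c_1(N)^l}{l!}\,\xi_{\mathcal{F}'}(\operatorname{ch}_{i-l}^\mathrm{H}(\gamma))
\]
realises each new constraint as an explicit $\mathbb{Q}$-linear combination of the old ones, the shifts being absorbed using the commutation relation $[R_{-1},\mathcal{L}_k]=(k+1)\mathcal{L}_{k-1}$ established in the preceding proposition. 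Taking $N=L_M^{-1/r}$ yields the desired equivalence.

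\textbf{Main obstacle.} The only nontrivial point is the adaptation itself: BLM prove Lemma 2.19 for surfaces with only $(p,p)$-cohomology, where the $S_k$ operator consists only of the $S_{k,0}$ piece. On a K3 surface one has in addition the $S_{k,2}$ term involving the holomorphic symplectic class $\sigma$, and one must verify that the BLM inductive argument still goes through. Because the commutation relation $[R_{-1},\mathcal{L}_k]=(k+1)\mathcal{L}_{k-1}$ was shown to hold for the full K3 Virasoro operator (including $S_{k,2}$) in the previous proposition, the BLM scheme applies essentially verbatim, and the equivalence follows.
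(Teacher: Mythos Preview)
Your normalization check is fine, but the heart of your equivalence argument contains an algebraic slip that breaks the approach. From $\det(\mathcal{F})^{-1/r}\otimes\pi_S^*\Delta^{1/r}\cong\pi_M^*L_M^{-1/r}$ you conclude $\mathcal{G}\cong(\mathcal{F}\otimes\det(\mathcal{F})^{-1/r})\otimes\pi_M^*L_M^{-1/r}$; what actually follows is $\mathcal{G}\cong\mathcal{F}\otimes\pi_M^*L_M^{-1/r}$. The two sheaves in the proposition differ by $\pi_S^*\Delta^{1/r}$, and $\pi_S^*\Delta^{1/r}$ is simply not isomorphic to any $\pi_M^*N$ (the first Chern classes lie in different K\"unneth components). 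So the reduction to a pure $\pi_M^*$-twist is false, and the mechanism you invoke --- the commutation $[R_{-1},\mathcal{L}_k]=(k+1)\mathcal{L}_{k-1}$, which governs invariance under $\pi_M^*$-twists --- is not the relevant one here.

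The paper's proof goes the other way: since the sheaves differ by the $\pi_S^*$-twist $\pi_S^*\Delta^{1/r}$, one uses the algebra isomorphism $\operatorname{F}:\mathbb{D}^S\to\mathbb{D}^S$, $\operatorname{F}(\operatorname{ch}_i^\mathrm{H}(\gamma))=\operatorname{ch}_i^\mathrm{H}(e^{c_1(L)}\gamma)$ with $L\in\operatorname{Pic}(S)$, which is exactly the content of Lemma~2.19 in \cite{bojko2022virasoro}. That lemma already gives $L_k\circ\operatorname{F}=\operatorname{F}\circ L_k$; the adaptation required for the K3 case is to check $\operatorname{F}\circ S_k=S_k\circ\operatorname{F}$, including the new $S_{k,2}$ piece. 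This holds because $\operatorname{F}$ commutes with $R_{-1}$ and $R_{-1}[\sigma]$, and because $e^{c_1(L)}\cdot\overline{\sigma}=\overline{\sigma}$ and $e^{c_1(L)}\cdot\boldsymbol{\operatorname{p}}=\boldsymbol{\operatorname{p}}$. That is the computation you need, not the $R_{-1}$ commutation relation.
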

\begin{proof}
    One need to show that in K3 case, the algebra isomorphism $\operatorname{F}:\mathbb{D}^S\to \mathbb{D}^S$ in lemma 2.19 of \cite{bojko2022virasoro} satisfies $\mathcal{L}_k\circ \operatorname{F} = \operatorname{F}\circ \mathcal{L}_k$ for all $k\geq -1$. More precisely: recall that $\operatorname{F}$ is defined as $\operatorname{F}\left(\operatorname{ch}_i^{\mathrm{H}}(\gamma)\right)=\operatorname{ch}_i^{\mathrm{H}}\left(e^{c_1(L)} \gamma\right)$ with $L\in \operatorname{Pic}(S)$, and It is already shown in \cite{bojko2022virasoro} that $L_k\circ F = F \circ L_k$ for $k\geq -1$. I need to show $\operatorname{F}\circ S_k=S_k\circ \operatorname{F}$ in addition. This is true since $\operatorname{F}$ commutes with $R_{-1}$, $R_{-1}[\gamma]$ and 
\begin{align}
\label{discussion}
\begin{split}
    \operatorname{F} S_k D &= -\frac{(k+1)!}{r}\operatorname{F}\bigg(R_{-1}[\sigma]\left(\operatorname{ch}_{k+1}^\mathrm{H}(\overline{\sigma})D\right) + R_{-1}\left(\operatorname{ch}_{k+1}^\mathrm{H}(\boldsymbol{\operatorname{p}})D\right)\bigg)\\
    &=-\frac{(k+1)!}{r}\bigg(R_{-1}[\sigma]\left(\operatorname{F}\operatorname{ch}_{k+1}^\mathrm{H}(\overline{\sigma})D\right) + R_{-1}\left(\operatorname{F}\operatorname{ch}_{k+1}^\mathrm{H}(\boldsymbol{\operatorname{p}})D\right)\bigg)\\
    &=-\frac{(k+1)!}{r}\bigg(R_{-1}[\sigma]\left(\operatorname{ch}_{k+1}^\mathrm{H}(\overline{\sigma})\operatorname{F}D\right) + R_{-1}\left(\operatorname{ch}_{k+1}^\mathrm{H}(\boldsymbol{\operatorname{p}})\operatorname{F}D\right)\bigg)\\
    &= S_k \operatorname{F} D,
    \end{split}
\end{align}
where $\sigma$ is the only class in $H^{0,2}(S)$ and $e^{c_1(H)}\cdot \overline{\sigma} = \overline{\sigma}$ is used.
\end{proof}

Consider Mukai vector $v=(n,d,0)$ with $n,d\neq 0$. I want to find a Virasoro operators $\mathcal{L}^Y_k=R_k+T_k+S^Y_k$ which satisfy the Virasoro constraints when using geometric realization with a $Y$-normalised universal sheaf $\mathcal{F}_Y$, where $Y\in H^{1,1}(S)$.
This means I want to find Virasoro operators $\mathcal{L}^Y_k$ which satisfy:
\begin{align}
\label{wanttosatisfy}
    \int_{M(n,d,0)}\xi_{\mathcal{F}_Y}(\mathcal{L}^Y_kD)=0,\quad\forall D\in\mathbb{D}^S,\forall k\geq -1.
\end{align}
Let me define the $S^Y_k$ operator as follows:
\begin{align}
\label{SY}
\begin{split}
    S^Y_{k,0}&:=-\frac{(k+1)!}{\int_S Y\cdot \operatorname{ch}(\alpha)}R_{-1}\operatorname{ch}_{k+1}^\mathrm{H}(Y)\\
S^Y_{k,2}&:=- \frac{(k+1)!}{n}R_{-1}[\sigma]\operatorname{ch}_{k+1}^\mathrm{H}(\overline{\sigma})\\
    S^Y_k &:= S^Y_{k,0}+S^Y_{k,2},
\end{split}
\end{align}
where $\sigma\in H^{0,2}(X)$, $\operatorname{ch}(\alpha)\cdot \sqrt{\text{td}_S} = v$ and $\overline{\sigma}\sigma=\boldsymbol{\operatorname{p}}$. It turns out that this is the correct definition of $S^Y_k$ in order for the Virasoro operators to satisfy the desired property \eqref{wanttosatisfy}. I will prove this statement in the following paragraphs.
\begin{proposition}
\label{equi}
Let $v=(n,d,r)$ with $n,d \neq 0$ be a Mukai vector, and let $Y\in H^{1,1}(S)$, then:
\[\int_{M(v)}\xi_{\mathcal{F}_Y}(\mathcal{L}^Y_k D) = 0 \text{ for any $k\geq -1$, $D\in \mathbb{D}^S$} 
    \label{Linv}\]
\[\Longleftrightarrow\int_{M(v)}\mathcal{L}_{\mathrm{inv}} D = 0.\]
\end{proposition}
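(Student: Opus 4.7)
My plan is to introduce the analogous invariant operator
\[
\mathcal{L}^Y_{\mathrm{inv}} := \sum_{j \geq -1} \frac{(-1)^j}{(j+1)!}\, \mathcal{L}^Y_j\, R_{-1}^{j+1},
\]
and argue that both sides of the claimed equivalence reduce to $\int_{M(v)} \xi_{\mathcal{F}_Y}(\mathcal{L}^Y_{\mathrm{inv}} D) = 0$ for all $D \in \mathbb{D}^S$. The first step is to repeat the computation of the commutation proposition proved earlier in this subsection, with $Y$ in place of $\boldsymbol{\operatorname{p}}$ and $1/\operatorname{rk}(v)$ replaced by $1/\int_S Y\cdot \operatorname{ch}(\alpha)$; the argument uses only that $R_{-1}$ commutes as a derivation with $R_{-1}[\alpha]$ for any $\alpha$, so it carries over to give $[R_{-1}, \mathcal{L}^Y_k] = (k+1) \mathcal{L}^Y_{k-1}$ and, by the same telescoping, $R_{-1} \mathcal{L}^Y_{\mathrm{inv}} = 0$. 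By Lemma~2.8 of \cite{bojko2022virasoro} the integral $\int_M \xi_\mathcal{F}(\mathcal{L}^Y_{\mathrm{inv}} D)$ is then independent of the universal sheaf.

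Next I would show $\int_M \mathcal{L}_{\mathrm{inv}} D = \int_M \mathcal{L}^Y_{\mathrm{inv}} D$. Since the two invariant operators agree on $R_k$, $T_k$, and $S_{k,2}$, their difference is concentrated in the $S_{j,0}$ piece and is $R_{-1}$-exact:
\[
(\mathcal{L}_j - \mathcal{L}^Y_j) D = R_{-1}(E_j \cdot D), \quad E_j := -\frac{(j+1)!}{\operatorname{rk}(v)}\ch{j+1}{\boldsymbol{\operatorname{p}}} + \frac{(j+1)!}{\int_S Y \cdot \operatorname{ch}(\alpha)}\ch{j+1}{Y}.
\]
Substituting into $\mathcal{L}_{\mathrm{inv}} - \mathcal{L}^Y_{\mathrm{inv}}$ and reorganizing by the commutation relations, the difference should telescope; then, computing its integral against $\xi_{\mathcal{F}_Y}$ and exploiting both the $Y$-normalization $\xi_{\mathcal{F}_Y}(\ch{1}{Y}) = 0$ and the sheaf-independence of both invariants, the total contribution vanishes.

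The equivalence then follows. Direction $(A) \Rightarrow (B)$ is immediate: using the equality above and expanding term-by-term,
\[
\int_M \mathcal{L}_{\mathrm{inv}} D = \int_M \mathcal{L}^Y_{\mathrm{inv}} D = \sum_{j \geq -1}\frac{(-1)^j}{(j+1)!}\int_M \xi_{\mathcal{F}_Y}(\mathcal{L}^Y_j R_{-1}^{j+1} D) = 0,
\]
each summand vanishing by hypothesis. For $(B) \Rightarrow (A)$, I would induct on $k$: choosing test descendents $D$ built from low-order Chern characters kills $R_{-1}^{j+1} D$ for large $j$ and isolates the $j = k$ summand in $\mathcal{L}^Y_{\mathrm{inv}}$, so the inductive hypothesis for $j < k$ allows extraction of $\int_M \xi_{\mathcal{F}_Y}(\mathcal{L}^Y_k D) = 0$ for each $k$ and $D$. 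The main obstacle is the second step: the cancellation of $\mathcal{L}_{\mathrm{inv}} - \mathcal{L}^Y_{\mathrm{inv}}$ under integration is not visible at the operator level on $\mathbb{D}^S$ and requires combining the sheaf-independence of $\mathcal{L}_{\mathrm{inv}}$ with the specific $Y$-normalization of $\mathcal{F}_Y$.
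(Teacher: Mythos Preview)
Your outline has the right architecture, but two steps need correction.

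\textbf{Step 2 is simpler than you think.} The equality $\mathcal{L}_{\mathrm{inv}} = \mathcal{L}^Y_{\mathrm{inv}}$ holds \emph{as operators on $\mathbb{D}^S$}, not merely after integration. The reason is that both $S_{j,0}$-pieces contribute zero to the invariant sum by a direct telescope: for any $\delta$ with $z := \int_S \delta\cdot\operatorname{ch}(\alpha)\neq 0$,
\[
\sum_{j\geq -1}\frac{(-1)^j}{(j+1)!}\Bigl(-\tfrac{(j+1)!}{z}R_{-1}\ch{j+1}{\delta}\Bigr)R_{-1}^{j+1}
= -\tfrac{1}{z}\sum_{j\geq -1}(-1)^j\bigl(\ch{j}{\delta}R_{-1}^{j+1}+\ch{j+1}{\delta}R_{-1}^{j+2}\bigr)=0,
\]
so $\mathcal{L}_{\mathrm{inv}}=\sum_j\tfrac{(-1)^j}{(j+1)!}(L_j+S_{j,2})R_{-1}^{j+1}=\mathcal{L}^Y_{\mathrm{inv}}$ identically. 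Your remark that the cancellation ``is not visible at the operator level'' is therefore not correct; no appeal to sheaf-independence or the $Y$-normalization is needed here.

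\textbf{Step 4 has a genuine gap.} Your induction scheme for $(B)\Rightarrow(A)$ does not work as written. Restricting to test descendents $D$ built from low-order Chern characters only proves the constraint for those particular $D$, not for arbitrary $D$; and there is no obvious way to lift from this restricted class. Moreover the induction should go \emph{downward} in $k$, with base case $k>\dim M$ (trivial for degree reasons). The paper's mechanism is different: set $J=\ch{1}{Y}$ and apply the invariant constraint to $J^{k+1}D$ with $D$ arbitrary. Expanding $R_{-1}^{j+1}(J^{k+1}D)$ by Leibniz and using both $\xi_{\mathcal{F}_Y}(J)=0$ and the crucial identity $\mathcal{L}^Y_k(JD)=J\,\mathcal{L}^Y_k(D)$ (which must be checked separately), every surviving term has $j\geq k$; the downward induction hypothesis then kills all $j>k$ and isolates $\int_M\xi_{\mathcal{F}_Y}(\mathcal{L}^Y_k D)$. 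The commutation $\mathcal{L}^Y_k(JD)=J\,\mathcal{L}^Y_k(D)$ is precisely where the specific form of $S^Y_{k,0}$ (with denominator $\int_S Y\cdot\operatorname{ch}(\alpha)$) is used.
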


To prove this proposition, I need the following
\begin{claim}
\label{claimcommuteJ}
    $\mathcal{L}^Y_k(JD)=J\mathcal{L}^Y_k D$ where $J=\operatorname{ch}_1^{\mathrm{H}}(Y) \in \mathbb{D}^S$. 
\end{claim}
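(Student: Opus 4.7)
The strategy will be to decompose $\mathcal{L}^Y_k = R_k + T_k + S^Y_{k,0} + S^Y_{k,2}$ and compute the commutator with multiplication by $J$ piece by piece, exploiting commutativity of $\mathbb{D}^S$ together with the derivation property of $R_k$, $R_{-1}$ and $R_{-1}[\sigma]$.

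First, $T_k$ is multiplication by a fixed element of $\mathbb{D}^S$, hence it commutes with $J$ automatically. Second, since $R_k$ is a derivation, $R_k(JD) - J\cdot R_k(D) = R_k(J)\cdot D$; from the defining formula, $R_k(\operatorname{ch}^{\mathrm{H}}_1(Y)) = (k+1)!\, \operatorname{ch}^{\mathrm{H}}_{k+1}(Y)$ for $k\geq 0$, while $R_{-1}(\operatorname{ch}^{\mathrm{H}}_1(Y)) = \operatorname{ch}^{\mathrm{H}}_0(Y)$.

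For the $S^Y_{k,2}$ piece, the decisive input is that $\sigma\cdot Y\in H^{1,3}(S)=0$, since $Y\in H^{1,1}$, $\sigma\in H^{0,2}$ and $S$ is a K3 surface (so $H^{\mathrm{odd}}(S)=0$). Using that $R_{-1}[\sigma]$ is a derivation and that $\mathbb{D}^S$ is commutative, the only surviving term is
\[
S^Y_{k,2}(JD) - J\cdot S^Y_{k,2}(D) = -\frac{(k+1)!}{n}\, \operatorname{ch}^{\mathrm{H}}_{k+1}(\overline{\sigma})\, R_{-1}[\sigma](J)\, D = -\frac{(k+1)!}{n}\, \operatorname{ch}^{\mathrm{H}}_{k+1}(\overline{\sigma})\, \operatorname{ch}^{\mathrm{H}}_0(\sigma Y)\, D = 0.
\]
The parallel computation for $S^Y_{k,0}$, with $R_{-1}$ in place of $R_{-1}[\sigma]$, gives
\[
S^Y_{k,0}(JD) - J\cdot S^Y_{k,0}(D) = -\frac{(k+1)!}{\int_S Y\cdot \operatorname{ch}(\alpha)}\, \operatorname{ch}^{\mathrm{H}}_{k+1}(Y)\, R_{-1}(J)\, D = -\frac{(k+1)!}{\int_S Y\cdot \operatorname{ch}(\alpha)}\, \operatorname{ch}^{\mathrm{H}}_{k+1}(Y)\, \operatorname{ch}^{\mathrm{H}}_0(Y)\, D.
\]

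Summing the four contributions, the total commutator reduces to
\[
[\mathcal{L}^Y_k,\, J\,\cdot\,]\, D \;=\; R_k(J)\, D \;-\; \frac{(k+1)!}{\int_S Y\cdot \operatorname{ch}(\alpha)}\, \operatorname{ch}^{\mathrm{H}}_{k+1}(Y)\, \operatorname{ch}^{\mathrm{H}}_0(Y)\, D.
\]
The main step, and the only delicate one, is the scalar identification $\operatorname{ch}^{\mathrm{H}}_0(Y) = \int_S Y\cdot \operatorname{ch}(\alpha)$. This follows from the K\"unneth decomposition $c_1(\mathcal{F}) = \pi_M^{\ast}\theta_{\mathcal{F}}(\boldsymbol{\operatorname{p}}) + \pi_S^{\ast} d$ used in Section \ref{positiverank}: the $H^2(M)$-summand dies on pushforward, leaving exactly the scalar $\int_S Y\cdot d = \int_S Y\cdot \operatorname{ch}(\alpha)$. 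With this substitution, the two terms cancel exactly for every $k\geq -1$: for $k\geq 0$ they both reduce to $\pm (k+1)!\, \operatorname{ch}^{\mathrm{H}}_{k+1}(Y)\, D$, and for $k=-1$ they both reduce to $\pm \operatorname{ch}^{\mathrm{H}}_0(Y)\, D$. This proves $\mathcal{L}^Y_k(JD) = J\cdot \mathcal{L}^Y_k(D)$. One remarks that the normalizing constant $1/\!\int_S Y\cdot \operatorname{ch}(\alpha)$ built into $S^Y_{k,0}$ in the definition \eqref{SY} was chosen precisely to achieve this cancellation.
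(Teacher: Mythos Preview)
Your proof is correct and follows essentially the same approach as the paper: decompose $\mathcal{L}^Y_k$ into its four pieces, use that $T_k$ commutes trivially and that $S^Y_{k,2}$ commutes because $R_{-1}[\sigma](J)=\operatorname{ch}^{\mathrm{H}}_0(\sigma Y)=0$, and then observe that the $R_k$ and $S^Y_{k,0}$ defects cancel once one uses $\operatorname{ch}^{\mathrm{H}}_0(Y)=\int_S Y\cdot\operatorname{ch}(\alpha)$. The paper records exactly these four computations and sums them, without further comment; your write-up adds the explicit $k=-1$ check and a justification of the scalar identity via the K\"unneth splitting of $c_1(\mathcal{F})$, which the paper simply takes for granted.
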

\begin{proof}[proof of claim]
\begin{align}
    R_k(J D)&= (k+1)!\operatorname{ch}_{k+1}^{\mathrm{H}}(Y)D+JR_k D\\
    T_k(JD)&=JT_kD \\
    S^Y_{k,0}(JD) &= -\frac{(k+1)!}{\int_S Y\cdot \operatorname{ch}(\alpha)}R_{-1}\left(\operatorname{ch}_{k+1}^\mathrm{H}(Y) JD\right)\\
    &=JS^Y_{k,0}(D)-\frac{(k+1)!}{\int_S Y\cdot \operatorname{ch}(\alpha)}\operatorname{ch}_{k+1}^\mathrm{H}(Y) \operatorname{ch}_0^{\mathrm{H}}(Y)D \\
    &=JS^Y_{k,0}(D) - (k+1)!\operatorname{ch}_{k+1}^{\mathrm{H}}(Y)D\\
    S^Y_{k,2}(JD)&= - \frac{(k+1)!}{n}R_{-1}[\sigma]\left(\operatorname{ch}_{k+1}^\mathrm{H}(\overline{\sigma}) JD\right)\\
    &=JS^Y_{k,2}(D) \text{  since $R_{-1}[\gamma]J=0$}.
\end{align}
Summing all terms gives the claim.
\end{proof}

\begin{proof}[Proof of proposition]
This proof of the Proposition \ref{equi} adapts the proof of the Proposition 2.16 of \cite{bojko2022virasoro}. To be more complete, I write the entire proof. First note:
\begin{align}
\begin{split}
\label{null1}
    \sum_{j \geqslant-1} & \frac{(-1)^j}{(j+1) !} S^Y_{j,0} R_{-1}^{j+1} \\
& =-\frac{1}{\int_S Y \cdot \operatorname{ch}(\alpha)} \sum_{j \geqslant-1}(-1)^j\left(\operatorname{ch}_j^{\mathrm{H}}(Y) R_{-1}^{j+1}+\operatorname{ch}_{j+1}^{\mathrm{H}}(Y) R_{-1}^{j+2}\right)=0,
\end{split}
\end{align}
where the convention $\mathrm{ch}_{i<0}^H(Y) = 0$ is used as before.
Analogously, one also has 
\begin{equation}
\label{null2}
    \sum_{j \geqslant-1} \frac{(-1)^j}{(j+1) !} S_{j,0} R_{-1}^{j+1}=0.
\end{equation}
Therefore
\begin{align}
\label{Linv}
\begin{split}
    \mathcal{L}_{\mathrm{inv}}&=\sum_{j \geqslant-1} \frac{(-1)^j}{(j+1) !} (L_j+ S_{j,0}+S_{j,2})R_{-1}^{j+1}\\
    &=\sum_{j \geqslant-1} \frac{(-1)^j}{(j+1) !} (L_j+ S^Y_{j,0}+S^Y_{j,2})R_{-1}^{j+1}\\
    &=\sum_{j \geqslant-1} \frac{(-1)^j}{(j+1) !} \mathcal{L}^Y_jR_{-1}^{j+1},
    \end{split}
\end{align}
where the second equality uses \eqref{null1} and \eqref{null2} and $S^Y_{k,2}=S_{k,2}$.
Therefore $\Longrightarrow$ is obvious.

Let us prove $\Longleftarrow$ next. The idea is to prove by induction, suppose the first part of the proposition holds for every $k'>k$ (it certainly holds for $k'>\operatorname{dim}(M)$ for degree reasons).
Let $J=\operatorname{ch}_1^{\mathrm{H}}(Y) \in \mathbb{D}^S$ as before, then the fact that $\mathcal{F}_Y$ is $Y$-normalised implies: $\xi_{\mathcal{F}_Y}(J)=0$.
Applying $\mathcal{L}_{\mathrm{inv}}$ in \eqref{Linv} to $J^{k+1}D$ gives
\begin{align}
\label{eqinvk}
    0=\int_{M(v)} \xi_{\mathcal{F}_Y}\left(\mathcal{L}_{\mathrm{inv}}\left(J^{k+1} D\right)\right)=\sum_{j \geqslant-1} \frac{(-1)^j}{(j+1) !} \int_{M(v)} \xi_{\mathcal{F}_Y}\left(\mathcal{L}_j^YR_{-1}^{j+1}\left(J^{k+1} D\right)\right)
\end{align}
Since $R_{-1}$ is a derivation and $R_{-1}(J)=\operatorname{ch}_0^{\mathrm{H}}(Y)=\int_S Y \cdot \operatorname{ch}(\alpha)$, I have
\begin{align}
R_{-1}^{j+1}\left(J^{k+1} D\right) & =\sum_{s=0}^{j+1}\binom{j+1}{s}R_{-1}^s\left(J^{k+1}\right) R_{-1}^{j+1-s}(D) \\
\label{eqinvk2}
& =\sum_{s=0}^{\min (k+1, j+1)}\binom{j+1}{s}\frac{(k+1) !}{(k+1-s) !} z^s J^{k+1-s} R_{-1}^{j+1-s}(D),
\end{align}
where I denote  $z=\int_S \delta \cdot \operatorname{ch}(\alpha)$. Using the fact that $\mathcal{L}_k^Y$ commutes with $J$, i.e. the Claim \ref{claimcommuteJ} and $\xi_{\mathcal{F}_Y}(J)=0$, one has that the only non vanishing terms in \eqref{eqinvk} come from the terms with $k=s-1$ in \eqref{eqinvk2}. Therefore:
\begin{align}
    0=\sum_{j \geqslant k} \frac{(-1)^j}{(j-k) !} z^{k+1} \int_{M(v)} \xi_{\mathcal{F}_Y}\left(\mathcal{L}^Y_j R_{-1}^{j-k}(D)\right).
\end{align}
By induction hypothesis, all terms with $j>k$ vanishes, therefore one has:
\begin{align}
    0=\int_{M(v)} \xi_{\mathcal{F}_Y}\left(\mathcal{L}_k^YD\right).
\end{align}
This finishes the proof. 
\end{proof}

In section 3, I have proven:
\[\int_{M(n,d,0)}\xi_{\mathcal{F}\otimes(\operatorname{det}\mathcal{F})^{-1/n}}(\mathcal{L}_kD)=0,\quad\forall D\in\mathbb{D}^S,\forall k\geq -1, \]
which, by the Proposition \ref{5.2}, implies 
\[\int_{M(n,d,0)}\xi_{\mathcal{F}_{\boldsymbol{\operatorname{p}}}}(\mathcal{L}_kD)=0,\quad\forall D\in\mathbb{D}^S,\forall k\geq -1.\]
This implies, by the definition \eqref{LinvDef} of $\mathcal{L}_\text{inv}$, that: 
\[\int_{M(n,d,0)}\xi_{\mathcal{F}_{\boldsymbol{\operatorname{p}}}}(\mathcal{L}_\text{inv}D)=\int_{M(n,d,0)}\mathcal{L}_\text{inv}D=0,\quad\forall D\in\mathbb{D}^S.\]
Above, by the Proposition \ref{equi}, implies:
\[\int_{M(n,d,0)}\xi_{\mathcal{F}_Y}(\mathcal{L}^Y_kD)=0,\quad\forall D\in\mathbb{D}^S,\forall k\geq -1.\]

\begin{comment}
One has:
\begin{align}
    \sum_{j\geq -1}\frac{(-1)^j}{(j+1)!}\mathcal{L}_jR^{j+1}_{-1}=\sum_{j\geq -1}\frac{(-1)^j}{(j+1)!}\mathcal{L}^Y_jR^{j+1}_{-1}.
\end{align}
Let $F=\operatorname{ch}^\mathrm{H}_1(Y)$, $Y$-normalised implies $\xi_{\mathcal{F}_Y}(F)=0$.
\begin{align}
    0=\int_{M^v} \xi_{\mathcal{F}_Y}(\mathcal{L}_{\text{inv}}(F^{k+1}D))=\sum_{j\geq -1}\frac{(-1)^j}{(j+1)!}\int_{M^v}\xi_{\mathcal{F}_Y}(\mathcal{L}^Y_jR^{j+1}_{-1}(F^{k+1}D))\\
\end{align}
\begin{claim}
 $\mathcal{L}^Y_k (F D) = F \mathcal{L}^Y_k (D)   $
\end{claim}
\begin{claimproof}
    $R_k(F D) = F R_k(D) + (k+1)!\operatorname{ch}^\mathrm{H}_{k+1}(Y)D$,
    $T_k(F D) = F T_k(D)$\\
    \begin{align*}
        S^Y_{k,0}(FD)&=-\frac{(k+1)!}{\int_S Y\cdot \operatorname{ch}(\alpha)}\operatorname{ch}^\mathrm{H}_{k+1}(Y) \operatorname{ch}^\mathrm{H}_{0}(Y)D + FS^Y_{k,0}(D)\\
        &=-(k+1)!\operatorname{ch}^\mathrm{H}_{k+1}(Y)D+ FS^Y_{k,0}(D)\\
        S^Y_{k,2}(FD)&= FS^Y_{k,2}(D), \text{since }\gamma\cdot Y =0 \text{ and } \operatorname{ch}^\mathrm{H}_{0}(Y) = \int_S Y\cdot \operatorname{ch}(\alpha).
    \end{align*}
\end{claimproof}

Using this claim and proof of proposition 2.16 of \cite{bojko2022virasoro}, one can show \[\int_{M^v}\xi_{\mathcal{F}_Y}\left(\mathcal{L}^Y_k D\right) = 0,\quad \forall k\geq -1.\]
\end{comment}
\subsection{Using Markman operator to deduce Virasoro for rank zero case.}
Now write the sheaf $\mathcal{F}_Y$ in terms of the language of \cite{Oberdieck_2022}:
\begin{align}
    \mathcal{F}_Y &= \mathcal{F}\otimes e^{-\xi_{\mathcal{F}}(\operatorname{ch}^\mathrm{H}_1(Y))/\int_S Y\cdot \operatorname{ch}(\alpha)}\\
    &=\mathcal{F}\otimes \exp{\left(\frac{\theta_{\mathcal{F}}(Y)}{(Y,v)}\right)}\\
    &=\mathcal{F}\otimes \exp{\left(\frac{\theta_{\mathcal{F}}(v)}{(v,v)}\right)}\otimes\exp{\left(\frac{\theta_{\mathcal{F}}(Y)}{(Y,v)}-\frac{\theta_{\mathcal{F}}(v)}{(v,v)}\right)},
\end{align}
where those are identities in $K$-group of coherent sheaves on $M \times S$.
Therefore, by similar calculations as \eqref{calculation4.1}, one has:
\begin{align}
\label{70}
    \xi_{\mathcal{F}_Y}(\operatorname{ch}^\mathrm{H}_\bullet(\gamma))=\exp{\left(  B_1\left( \frac{Y}{(Y,v)}-\frac{v}{(v,v)} \right) \right)}B\left( \gamma^\vee \sqrt{\text{td}_S}^{-1} \right).
\end{align}
Therefore in order to use the Markmann operator one only needs to keep track of intersection matrix of $(v,\boldsymbol{\operatorname{p}},\mathbbm{1},L_i)$, where $L_i\in H^2(S)$ and $Y=L_i$ for one of the $i$'s. 

Consider another Mukai vector $v'=(0,d,n)$, $\text{Span}_{\mathbb{C}}(v,\boldsymbol{\operatorname{p}},\mathbbm{1},L_i)$ and $\text{Span}_{\mathbb{C}}(v',\mathbbm{1},\boldsymbol{\operatorname{p}},L_i)$ with $L_i\in H^2(X)$ are non-degenerate with respect to the Mukai pairing. Then $(v,\boldsymbol{\operatorname{p}},\mathbbm{1},L_i)$ and $(v',\mathbbm{1},\boldsymbol{\operatorname{p}},L_i)$ have the same intersection matrix, therefore by Lemma \ref{Oberdlemma}, there is an isometry sending $v\mapsto v'$, $\mathbbm{1}\mapsto \boldsymbol{\operatorname{p}}$, $\boldsymbol{\operatorname{p}}\mapsto\mathbbm{1}$ and $L_i\mapsto L_i$. Thus let us define operator $\boldsymbol{\mathrm{M}}:\mathbb{D}^S\to\mathbb{D}^S $ which is an algebra isomorphism as 
\begin{align}
    \boldsymbol{\mathrm{M}} (\operatorname{ch}_i^\mathrm{H}(\mathbbm{1}))= \operatorname{ch}_i^\mathrm{H}(-\mathbbm{1})\\
    \boldsymbol{\mathrm{M}} (\operatorname{ch}_i^\mathrm{H}(\boldsymbol{\operatorname{p}}))= \operatorname{ch}_i^\mathrm{H}(\mathbbm{1}+\boldsymbol{\operatorname{p}})\\
     \boldsymbol{\mathrm{M}} (\operatorname{ch}_i^\mathrm{H}(L_i))= \operatorname{ch}_i^\mathrm{H}(L_i)
\end{align}
The operator $\boldsymbol{\mathrm{M}}$ is defined to make the intersection matrix of the arguments of function $B(\bullet)$ in \eqref{70} invariant after the isometry.

In resume, the following constraint on $M(v')$ is proven by the Theorem \ref{mainthm} and section 5.1: 
\begin{align}
\label{72}
    \int_{M({v'})} \xi_{\mathcal{F}_Y}\left(\boldsymbol{\mathrm{M}}\circ\mathcal{L}_k^Y D\right)=0,\quad \forall D\in\mathbb{D}^S,\forall k\geq -1. 
\end{align}
We also need the following lemma to transform \eqref{72} in to a nicer form.
\begin{lemma}
\label{claimM}
$\boldsymbol{\mathrm{M}}\circ\boldsymbol{\mathrm{M}}=\operatorname{id}$, 
    $\boldsymbol{\mathrm{M}}\circ R_k = R_k \circ \boldsymbol{\mathrm{M}} $ and $\boldsymbol{\mathrm{M}}\circ T_k = T_k \circ \boldsymbol{\mathrm{M}}$.
\end{lemma}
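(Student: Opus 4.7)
The plan is to establish the three identities in order, with the only real content in the $T_k$ identity.

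For $\boldsymbol{\mathrm{M}} \circ \boldsymbol{\mathrm{M}} = \operatorname{id}$: both sides are algebra homomorphisms of $\mathbb{D}^S$, so it suffices to verify equality on the algebra generators $\operatorname{ch}_i^\mathrm{H}(\mathbbm{1})$, $\operatorname{ch}_i^\mathrm{H}(\boldsymbol{\operatorname{p}})$ and $\operatorname{ch}_i^\mathrm{H}(L)$ with $L \in H^2(S,\mathbb{C})$. A direct check gives $\mathbbm{1} \mapsto -\mathbbm{1} \mapsto \mathbbm{1}$, $\boldsymbol{\operatorname{p}} \mapsto \mathbbm{1} + \boldsymbol{\operatorname{p}} \mapsto -\mathbbm{1} + (\mathbbm{1}+\boldsymbol{\operatorname{p}}) = \boldsymbol{\operatorname{p}}$, and $L \mapsto L$.

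For $R_k \circ \boldsymbol{\mathrm{M}} = \boldsymbol{\mathrm{M}} \circ R_k$: the scalar $\prod_{j=0}^{k}(i+j)$ in the formula for $R_k$ depends only on the index $i$, so on every generator $R_k$ acts by a fixed cohomology substitution together with an $i$-dependent rescaling, while $\boldsymbol{\mathrm{M}}$ acts purely as a linear change of cohomology argument. The two therefore commute on each generator by inspection. To pass from generators to the whole algebra I would use that $R_k$ is a derivation and $\boldsymbol{\mathrm{M}}$ an algebra homomorphism, so the commutator satisfies the twisted Leibniz identity
\[
    (R_k \boldsymbol{\mathrm{M}} - \boldsymbol{\mathrm{M}} R_k)(ab) = (R_k \boldsymbol{\mathrm{M}} - \boldsymbol{\mathrm{M}} R_k)(a)\, \boldsymbol{\mathrm{M}}(b) + \boldsymbol{\mathrm{M}}(a)\,(R_k \boldsymbol{\mathrm{M}} - \boldsymbol{\mathrm{M}} R_k)(b).
\]
Vanishing on generators then propagates to all of $\mathbb{D}^S$ by induction on the number of factors in a monomial.

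For $T_k \circ \boldsymbol{\mathrm{M}} = \boldsymbol{\mathrm{M}} \circ T_k$: since $T_k$ is multiplication by a fixed element $\tau_k \in \mathbb{D}^S$ and $\boldsymbol{\mathrm{M}}$ is an algebra homomorphism, the identity reduces to showing $\boldsymbol{\mathrm{M}}(\tau_k) = \tau_k$. On a K3 surface $\operatorname{td}_S = \mathbbm{1} + 2\boldsymbol{\operatorname{p}}$, so the K\"unneth decomposition reads
\[
    \Delta_* \operatorname{td}_S = \mathbbm{1} \otimes \boldsymbol{\operatorname{p}} + \boldsymbol{\operatorname{p}} \otimes \mathbbm{1} + \sum_\alpha L_\alpha \otimes L_\alpha^* + 2\, \boldsymbol{\operatorname{p}} \otimes \boldsymbol{\operatorname{p}},
\]
where $\{L_\alpha\}$ is a basis of $H^2(S)$ with Poincar\'e dual basis $\{L_\alpha^*\}$. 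Accounting for the sign $(-1)^{2-p^L}$ makes $\tau_k$ explicit. The main step is then to apply $\boldsymbol{\mathrm{M}}$ term by term, expand $\operatorname{ch}_j^\mathrm{H}(\mathbbm{1} + \boldsymbol{\operatorname{p}}) = \operatorname{ch}_j^\mathrm{H}(\mathbbm{1}) + \operatorname{ch}_j^\mathrm{H}(\boldsymbol{\operatorname{p}})$, and check that the spurious $\operatorname{ch}_i^\mathrm{H}(\mathbbm{1})\operatorname{ch}_j^\mathrm{H}(\mathbbm{1})$ contributions coming from the first two K\"unneth summands (each contributing $-1$ after expansion) are cancelled exactly by the $+2$ contribution produced by the final summand $2\,\boldsymbol{\operatorname{p}} \otimes \boldsymbol{\operatorname{p}}$, while the middle term $\sum_\alpha L_\alpha \otimes L_\alpha^*$ is left untouched by $\boldsymbol{\mathrm{M}}$. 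This cancellation is the only real obstacle, and it relies crucially on the coefficient $2$ in $\operatorname{td}_S = 1 + 2\boldsymbol{\operatorname{p}}$ for a K3 surface.
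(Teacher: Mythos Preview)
Your proposal is correct and follows essentially the same route as the paper: the first two identities are declared obvious, and the $T_k$ identity is reduced to $\boldsymbol{\mathrm{M}}(\tau_k)=\tau_k$ and verified by expanding the K\"unneth decomposition of $\Delta_*\operatorname{td}_S$ on a K3 and checking the cancellation among the $\mathbbm{1}$ and $\boldsymbol{\operatorname{p}}$ pieces. One small remark: your sketch singles out the spurious $\operatorname{ch}_i^{\mathrm H}(\mathbbm{1})\operatorname{ch}_j^{\mathrm H}(\mathbbm{1})$ terms, but the mixed terms $\operatorname{ch}_i^{\mathrm H}(\mathbbm{1})\operatorname{ch}_j^{\mathrm H}(\boldsymbol{\operatorname{p}})$ also change coefficient (from $-1$ out of the first two summands plus $+2$ from the last, netting $+1$), and this should be said explicitly; once you do, the computation matches the paper's line by line.
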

\begin{proof}[Proof of lemma]
    $\boldsymbol{\mathrm{M}}\circ\boldsymbol{\mathrm{M}}=\operatorname{id}$ and $\boldsymbol{\mathrm{M}}\circ R_k = R_k \circ \boldsymbol{\mathrm{M}} $ are obvious. 
    \begin{align*}
        &\boldsymbol{\mathrm{M}}T_k  \\
        &= \boldsymbol{\mathrm{M}}\left( \sum_{i+j=k}(-1)^{2-p^L}i!j!\operatorname{ch}_{i}^\mathrm{H}\operatorname{ch}_{j}^\mathrm{H}(\text{td}_S)\right)\\
        &=\boldsymbol{\mathrm{M}}\left( \sum_{\substack{i+j=k\\\gamma_i^L,\gamma_i^R\in H^{2}(S)}} (-1)^{2-p^L} i!j! \operatorname{ch}^\mathrm{H}_i(\gamma_i^L)\operatorname{ch}^\mathrm{H}_j(\gamma_i^R) + \sum_{i+j=k}i!j!\left( \operatorname{ch}^\mathrm{H}_i(\mathbbm{1})\operatorname{ch}^\mathrm{H}_j(\boldsymbol{\operatorname{p}}) +\operatorname{ch}^\mathrm{H}_i(\mathbbm{1})\operatorname{ch}^\mathrm{H}_j(\boldsymbol{\operatorname{p}})\right) \right)\\
        &\quad\quad+\boldsymbol{\mathrm{M}}\left( 2\sum_{i+j=k}i!j! \operatorname{ch}^\mathrm{H}_i(\boldsymbol{\operatorname{p}})\operatorname{ch}^\mathrm{H}_j(\boldsymbol{\operatorname{p}}) \right)\\
        &=\sum_{\substack{i+j=k\\\gamma_i^L,\gamma_i^R\in H^{2}(S)}} (-1)^{2-p^L} i!j! \operatorname{ch}^\mathrm{H}_i(\gamma_i^L)\operatorname{ch}^\mathrm{H}_j(\gamma_i^R) \\
        &\quad\quad+ \sum_{i+j=k} i!j! \left[ -\operatorname{ch}^\mathrm{H}_i(\mathbbm{1})\operatorname{ch}^\mathrm{H}_j(\boldsymbol{\operatorname{p}})- \operatorname{ch}^\mathrm{H}_i(\boldsymbol{\operatorname{p}})\operatorname{ch}^\mathrm{H}_j(\mathbbm{1}) -2 \operatorname{ch}^\mathrm{H}_i(\mathbbm{1})\operatorname{ch}^\mathrm{H}_j(\mathbbm{1})\right]\\
        &\quad\quad +2\sum_{i+j=k}i!j!\left[ \operatorname{ch}^\mathrm{H}_i(\mathbbm{1})\operatorname{ch}^\mathrm{H}_j(\mathbbm{1}) +\operatorname{ch}^\mathrm{H}_i(\mathbbm{1})\operatorname{ch}^\mathrm{H}_j(\boldsymbol{\operatorname{p}})+ \operatorname{ch}^\mathrm{H}_i(\boldsymbol{\operatorname{p}})\operatorname{ch}^\mathrm{H}_j(\mathbbm{1}) + \operatorname{ch}^\mathrm{H}_i(\boldsymbol{\operatorname{p}})\operatorname{ch}^\mathrm{H}_j(\boldsymbol{\operatorname{p}})\right]\\
        &=T_k.
    \end{align*}
\end{proof}

Therefore the following constraint, in the rank zero case, is true:
\begin{proposition}
\label{rankzeroprop}
Let $v=(0,d,n)$ with $d,n>0$ be a Mukai vector, let $Y\in H^{1,1}(S)$ and let $S^Y_k:=S^Y_{k,0}+S^Y_{k,2}$ as defined in \eqref{SY}, then the following constraints are true:
\[
    \int_{M(v)} \xi_{\mathcal{F}_Y}\bigg(\left(R_k+T_k+ \boldsymbol{\mathrm{M}}\circ S^Y_k\circ\boldsymbol{\mathrm{M}}
    \right)D\bigg) =0\quad \forall D\in \mathbb{D}^S,\forall k\geq -1.
\]
\end{proposition}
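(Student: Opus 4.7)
The plan is to deduce the proposition directly from equation \eqref{72} together with Lemma \ref{claimM}, via a simple substitution argument. Equation \eqref{72} already furnishes the constraint
\[
\int_{M(v)} \xi_{\mathcal{F}_Y}\bigl(\boldsymbol{\mathrm{M}} \circ \mathcal{L}^Y_k\, D\bigr) = 0
\]
on the rank-zero moduli space $M(v)$ with $v=(0,d,n)$ (the Mukai vector $v'$ in the discussion preceding the proposition), valid for every $D \in \mathbb{D}^S$ and every $k \geq -1$. Since $\boldsymbol{\mathrm{M}}:\mathbb{D}^S \to \mathbb{D}^S$ is an algebra isomorphism (indeed an involution by Lemma \ref{claimM}), it acts as a bijection on $\mathbb{D}^S$, so we may substitute $D \mapsto \boldsymbol{\mathrm{M}}(D)$ to obtain
\[
\int_{M(v)} \xi_{\mathcal{F}_Y}\bigl(\boldsymbol{\mathrm{M}} \circ \mathcal{L}^Y_k \circ \boldsymbol{\mathrm{M}}(D)\bigr) = 0
\]
for all $D \in \mathbb{D}^S$ and $k\geq -1$.

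Next I would expand $\mathcal{L}^Y_k = R_k + T_k + S^Y_k$ and conjugate each summand by $\boldsymbol{\mathrm{M}}$. Using Lemma \ref{claimM}, the operator $\boldsymbol{\mathrm{M}}$ commutes with both $R_k$ and $T_k$, and squares to the identity. Hence
\[
\boldsymbol{\mathrm{M}} \circ R_k \circ \boldsymbol{\mathrm{M}} = R_k, \qquad \boldsymbol{\mathrm{M}} \circ T_k \circ \boldsymbol{\mathrm{M}} = T_k,
\]
and therefore
\[
\boldsymbol{\mathrm{M}} \circ \mathcal{L}^Y_k \circ \boldsymbol{\mathrm{M}} \;=\; R_k + T_k + \boldsymbol{\mathrm{M}} \circ S^Y_k \circ \boldsymbol{\mathrm{M}}.
\]
Substituting this back into the previous display yields exactly the constraint claimed in the proposition.

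There is essentially no obstacle to this argument: all of the substantial work has already been packaged into equation \eqref{72}, which itself was obtained by transferring the positive-rank Virasoro constraint of Theorem \ref{mainthm} from $M(n,d,0)$ to $M(0,d,n)$ via the Markman monodromy operator $\gamma(g)$ applied to the isometry exchanging $\mathbbm{1}\leftrightarrow\boldsymbol{\operatorname{p}}$ (while keeping $L_i$ fixed and sending $v\mapsto v'$), together with the normalization switch to a $Y$-normalized universal sheaf handled by Proposition \ref{equi}. The only care needed in the present step is to confirm that $\boldsymbol{\mathrm{M}}$ indeed preserves $\mathbb{D}^S$ and is an involution, which is immediate from its definition on the generators $\operatorname{ch}^{\mathrm{H}}_i(\mathbbm{1}),\,\operatorname{ch}^{\mathrm{H}}_i(\boldsymbol{\operatorname{p}}),\,\operatorname{ch}^{\mathrm{H}}_i(L_i)$ since $\boldsymbol{\mathrm{M}}^2$ fixes each generator.
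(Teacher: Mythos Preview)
Your proposal is correct and follows exactly the approach the paper takes: the paper states Proposition~\ref{rankzeroprop} immediately after establishing \eqref{72} and Lemma~\ref{claimM}, leaving the substitution $D\mapsto\boldsymbol{\mathrm{M}}(D)$ and the conjugation identity $\boldsymbol{\mathrm{M}}\circ\mathcal{L}^Y_k\circ\boldsymbol{\mathrm{M}} = R_k + T_k + \boldsymbol{\mathrm{M}}\circ S^Y_k\circ\boldsymbol{\mathrm{M}}$ implicit. You have simply made these steps explicit.
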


The lemma \ref{claimM} means that the operator $\boldsymbol{\mathrm{M}}$ keeps the operator $L_k=R_k+T_k$ invariant, which justify to call above constraint the Virasoro constraint.

\begin{remark}
Assume $F\in M_{(0,d,n)} $ with $n>0$.
    On the set level, the map $F\mapsto F^D:=\ext(F,\omega_S)$ is a bijection, notice that $\ext(F,\omega_S)\cong\mathcal{R}\hom(F,\mathcal{O}_S)[1]=F^\vee[1]$. The type of $F^\vee$ is $\operatorname{ch}(F^\vee[1])=(0,d,-n)$, therefore one may expect that the proposition \ref{rankzero} also holds for spaces $M_{(0,d,-n)}$ with $d,n>0$.
\end{remark}

\section{Negative Virasoro operators}

In this section, let $S$ be a K3 surface as before. In \cite{bojko2022virasoro}, the Virasoro operators are related with lattice vertex algebra operators. Inspired by this correspondence, I will construct the operators $L_{k<-1}$ which along with previously defined $L_{k\geq -1}$ form a Virasoro algebra with the central charge $e(S)=24$, the topological Euler characteristic of $S$.

Let $k>1$ define the following derivations:

\begin{equation}
  \forall \gamma\in H^\ast(S,\C),\quad  R_{-k} \operatorname{ch}^\mathrm{H}_i(\gamma)=
    \begin{cases}
      \frac{(i-k)!}{(i-1)!}\operatorname{ch}^\mathrm{H}_{i-k}(\gamma), & \text{if}\ i\geq k \\
      0, & \text{otherwise}
    \end{cases}
\end{equation}
Define the operator $\operatorname{d}_i[v]$ acting on $D\in \mathbb{D}^S$ as a derivation as follows:
\begin{align}
   \forall i>0,j\geq 0,\quad \operatorname{d}_i[v] \operatorname{ch}^\mathrm{H}_j(\gamma)=\frac{\delta_{ij}}{(i-1)!}\int_S \gamma\cdot v \quad\forall \gamma\in H^\ast(S,\C).
\end{align}
for $v$ a homogeneous element of $H^\ast(S,\mathbb{C})$. 
Let $\sum_i\delta_i^L\otimes \delta_i^R$ be the Künneth decomposition of the diagonal class $\Delta\in H^\ast(S\times S,\C)$ and $\delta_i^L,\delta_j^R$ have Hodge type $(p^L_i,q^L_i)$ and $(p^R_i,q^R_i)$, respectively.
Next, choose such a Künneth decomposition $\sum_w\delta_w^L\otimes \delta_w^R$, define the operator $T_{-k}$ as\footnote{For surfaces with only even cohomology, $(-1)^{\operatorname{dim}(S)-p^L}=(-1)^{p^Lp^R}$. The latter format is used in \cite{Moreira_2022},}:
\begin{align}
    T_{-k}= -\frac{1}{4}\sum_{w} \sum_{\substack{i+j=k\\i>0\\j>0}}(-1)^{p_w^Lp_w^R}\operatorname{d}_j[\delta^L_w]\operatorname{d}_i[\delta^R_w] + \left(\frac{1}{4}\int_S \operatorname{td}_S\right)\sum_{\substack{i+j=k\\i>0\\j>0}}  \operatorname{d}_j[\boldsymbol{\operatorname{p}}]\operatorname{d}_i[\boldsymbol{\operatorname{p}}].
\end{align}
By the same reason why $T_k,k\geq -1$ are independent of the representations of the Künneth decomposition, which is since $T_k$ was defined in a multi-linear way, one can see that $T_{-k},k>0$ are also independent of the Künneth decomposition.

\begin{comment}
Let $\{\alpha_i \}_i$ be a basis for $H^\ast (S,\C)$, define $\alpha_i^\vee$ to be the unique vector satisfying $\int_S \alpha_j\alpha_i^\vee = \delta_{ij}$; define the matrix $g$ by $g_{ij}:=\int_S \alpha^i\alpha^j$, one sees that $\alpha_i^\vee = \sum_k (g^{-1})_{ik}\alpha_k$, since
\[\int_S \alpha_j \left(\sum_k (g^{-1})_{ik}\alpha_k\right) =\sum_k(g^{-1})_{ik}g_{jk}=\delta_{ij}. \]
Next, choose a basis $\{v_i\}_i$ of $H^\ast (S,\C)$, define the operator $T_{-k}$ as follows:
\begin{align}
    T_{-k}= -\frac{1}{4}\sum_{v\in B} \sum_{\substack{i+j=k\\i>0\\j>0}}(-1)^{p_v}\operatorname{d}_j[v]\operatorname{d}_i[v^\vee] + \left(\frac{1}{4}\int_S \operatorname{td}_S\right)\sum_{\substack{i+j=k\\i>0\\j>0}}  \operatorname{d}_j[\boldsymbol{\operatorname{p}}]\operatorname{d}_i[\boldsymbol{\operatorname{p}}].
\end{align}
The operator $T_{-k}$ is independent of the choice of the basis $\{v_i\}_i$, this can be seen as follows:
let me define the matrix $h$ by  $v_i=\sum_k h_{ik}\alpha_k$. One sees that $J_{ij}:=\int_S v_iv_j=\sum_{k,p}h_{ik}h_{jp}g_{kp}$ and $(J^{-1})_{ij}=\sum_{\alpha,\beta}(h^{-1})_{i\alpha}(g^{-1})_{\alpha\beta}(h^{-1})_{j\beta}$, then:
\begin{align}
    d[v_i]d[v_i^\vee]&=d[\sum_p h_{ip}\alpha_p]d[\sum_l(J^{-1})_{il}\sum_k h_{lk}\alpha_k]\\
&=\sum_p\sum_k\sum_l\sum_{\alpha,\beta}d[\alpha_p]d[(h^{-1})_{i\alpha}(g^{-1})_{\alpha\beta}(h^{-1})_{l\beta} h_{lk} h_{ip}\alpha_k]\\
&=
\end{align}
\end{comment}

To clarify calculations, I will choose a particular representation of Künneth decomposition. Let $\{\alpha_i \}_i$ be a basis for $H^\ast (S,\C)$, define $\{\alpha_i^\vee\}_i$ to be the unique vectors satisfying $\int_S \alpha_j\alpha_i^\vee = \delta_{ij}$. Then $\sum_i\alpha_i\otimes\alpha_i^\vee$ is a Künneth decomposition.
One can choose the basis $B$ of $H^\ast (S,\C)$ in such a way that:
$\mathbbm{1}\in B$ and $\mathbbm{1}^\vee = \mathbf{p}\in B$; also $\sigma,\sigma^\vee=\overline{\sigma}\in B$ where $\sigma\in H^{2,0}(S)$;
$\gamma^\vee=\gamma\in B$ for all $\gamma = H^{1,1}(S)$. That means:
\begin{align}
    B=\{ \gamma,\gamma^\vee |\gamma\in H^0(S,\mathbb{C})\cup H^{2,0}(S,\mathbb{C}) \}\cup B^{1,1},
\end{align}
where $B^{1,1}=\{\gamma_1,\dots,\gamma_s\}$ is a basis of $H^{1,1}(S,\mathbb{C})$ such that $\int_S\gamma_i\gamma_j=\delta_{ij}$. This is possible since this symmetric paring is non-degenerate on $H^{1,1}(S,\mathbb{C})$ and can be diagonalized. The operator $T_{-k}$ now becomes:
\begin{align}
    T_{-k}= -\frac{1}{4}\sum_{v\in B} \sum_{\substack{i+j=k\\i>0\\j>0}}(-1)^{p_v}\operatorname{d}_j[v]\operatorname{d}_i[v^\vee] + \left(\frac{1}{4}\int_S \operatorname{td}_S\right)\sum_{\substack{i+j=k\\i>0\\j>0}}  \operatorname{d}_j[\boldsymbol{\operatorname{p}}]\operatorname{d}_i[\boldsymbol{\operatorname{p}}],
\end{align}
where $p_v$ is defined by $v\in H^{p_v,q_v}(S,\C)$.
Also define the operator $L_{-k}$ as:
\begin{align}
\label{knegative}
    L_{-k}=R_{-k}+T_{-k}.
\end{align}

In this subsection, I will prove the following proposition:
\begin{proposition}
\label{negvir}
    Let $S$ be a K3 surface and let the operators  $\{L_k:\mathbb{D}^S\to \mathbb{D}^S\}_{k\in\mathbb{Z}}$ be defined as \eqref{kpositive} when $k\geq -1$ and be defined as \eqref{knegative} when $k<-1$ then the following commutation relation is true:
    \begin{align}
        [L_{l},L_{-k}]=(-l-k)L_{l-k}+\frac{k^3-k}{12} \cdot e(S)\cdot\delta_{k,l},
    \end{align}
    where $e(S)$ is the topological Euler characteristic of $S$.
\end{proposition}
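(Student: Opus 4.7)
The plan is to decompose $L_l = R_l + T_l$ and $L_{-k} = R_{-k} + T_{-k}$ and expand
\[
[L_l, L_{-k}] \;=\; [R_l, R_{-k}] + [R_l, T_{-k}] + [T_l, R_{-k}] + [T_l, T_{-k}],
\]
then show that the first three pieces combine to $-(l+k) L_{l-k}$, while the fourth supplies the central term when $l=k$ and vanishes otherwise. Throughout, $R_l$ and $R_{-k}$ are derivations on $\mathbb{D}^S$; for $l\geq -1$, $T_l$ acts by multiplication by a fixed element of $\mathbb{D}^S$; and for $k>1$, $T_{-k}$ is a second-order differential operator in the derivations $\operatorname{d}_j[v]$.

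For the $R$-$R$ commutator, since both operators are derivations it suffices to test on a generator $\operatorname{ch}^{\mathrm{H}}_i(\gamma)$. A direct calculation gives
\[
[R_l, R_{-k}]\,\operatorname{ch}^{\mathrm{H}}_i(\gamma) \;=\; -(l+k)\,\frac{(i+l-k)!}{(i-1)!}\,\operatorname{ch}^{\mathrm{H}}_{i+l-k}(\gamma) \;=\; -(l+k)\,R_{l-k}\,\operatorname{ch}^{\mathrm{H}}_i(\gamma),
\]
and the same ratio of factorials interpolates between the positive- and negative-index formulas for $R$, so the identity is uniform in the sign of $l-k$. For the cross terms I would first prove the auxiliary identity $[R_l, \operatorname{d}_j[v]] = -j\,\operatorname{d}_{j-l}[v]$ for every integer $l$, again by evaluating on a generator. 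Combined with Leibniz applied to the quadratic form defining $T_{-k}$, this computes $[R_l, T_{-k}]$; a parallel calculation for $[T_l, R_{-k}]$ uses that $R_{-k}$ differentiates each $\operatorname{ch}^{\mathrm{H}}$-factor inside the polynomial $T_l$. After reindexing the sums $i+j=k$ and $a+b=l$, the two cross contributions add to $-(l+k)\,T_{l-k}$, completing the leading Virasoro term.

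The main obstacle is the $T$-$T$ commutator. Applying $T_{-k}$ to the polynomial $T_l$ produces a Wick-type expansion with two kinds of contributions: \emph{single contractions}, where one of the two $\operatorname{d}$'s hits a single factor of $T_l$, yielding a first-order differential operator, and \emph{double contractions}, where both $\operatorname{d}$'s hit the two $\operatorname{ch}^{\mathrm{H}}$-factors in a single monomial of $T_l$, yielding a pure scalar. The single-contraction terms must cancel upon antisymmetrization or be absorbed into the leading term; this cancellation is exactly what motivates the coefficients $-\tfrac14$ and $\tfrac14\int_S \operatorname{td}_S$ in the definition of $T_{-k}$. The double contraction is nonzero only when $l=k$: the combinatorial factor $a!\,b!/((a-1)!(b-1)!) = ab$, summed with two orderings over $a+b=k$, $a,b>0$, gives $2\cdot\tfrac{1}{6}(k^3-k)$; the Künneth sum contracts to $\int_S c_2(T_S)\,\operatorname{td}_S = e(S)$ for a K3 surface (since $\boldsymbol{\operatorname{p}}\cdot\boldsymbol{\operatorname{p}}=0$ kills the Todd-correction contribution); and multiplying by the prefactor $-\tfrac14$ in $T_{-k}$, together with the commutator sign from $[T_l, T_{-k}] = -T_{-k}(T_l) + (\text{first order})$, yields the scalar $+\tfrac{k^3-k}{12}\,e(S)$, exactly the stated central term. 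The case $l<-1$, where $T_l$ itself is a quadratic derivation, follows from a symmetric double-Wick expansion producing the same structure. The most delicate step is verifying the single-contraction cancellation for every pair $(l,k)$; it hinges on a careful balance of the signs $(-1)^{p^L_w p^R_w}$ against the explicit form of $T_l$, and it is this balance that justifies the particular prefactors chosen in \eqref{knegative}.
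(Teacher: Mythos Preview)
Your overall architecture---split $[L_l,L_{-k}]$ into $R$-$R$, $R$-$T$, $T$-$R$, and $T$-$T$ pieces and extract the central term from the double contraction in $[T_l,T_{-k}]$---matches the paper's, and your computation of the scalar $\tfrac{k^3-k}{12}\,e(S)$ is correct.  However, the claim
\[
[R_l,R_{-k}]\,\ch{i}{\gamma}\;=\;-(l+k)\,R_{l-k}\,\ch{i}{\gamma}
\]
is \emph{false} as stated: the factorial ratio does not interpolate uniformly across the threshold $i<k$ where $R_{-k}$ is truncated to zero.  For instance, take $l=2$, $k=3$, $i=1$.  Then $R_{-3}\,\ch{1}{\gamma}=0$, so $R_2R_{-3}\,\ch{1}{\gamma}=0$, while $R_{-3}R_2\,\ch{1}{\gamma}=R_{-3}(6\,\ch{3}{\gamma})=3\,\ch{0}{\gamma}$, giving $[R_2,R_{-3}]\,\ch{1}{\gamma}=-3\,\ch{0}{\gamma}$.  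But $-(l+k)R_{l-k}\,\ch{1}{\gamma}=-5\,\ch{0}{\gamma}$.  The discrepancy is exactly the boundary contribution from indices $0<i<k$.

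This is not repaired by your single-contraction step.  You write that the single contractions ``cancel upon antisymmetrization or be absorbed into the leading term,'' but in fact they do \emph{neither} on their own: they produce a first-order operator $R_{l-k}[\,\bullet-k\,]$ acting on the low-index factors $D_{<k}$, and this is precisely what compensates the defect in $[R_l,R_{-k}]$ on that same range.  The paper makes this explicit by decomposing every monomial as $D=D_{<k}\cdot D_0\cdot D_{\geq k}$, computing
\[
[R_l,R_{-k}](D)\;=\;D_{<k}D_0\bigl((-k{-}l)R_{l-k}D_{\geq k}\bigr)\;-\;\bigl(R_{l-k}[\,\bullet+l\,]D_{<k}D_0\bigr)D_{\geq k},
\]
and then showing that the single-contraction piece of $[T_l,T_{-k}]$ supplies $+R_{l-k}[\,\bullet-k\,]D_{<k}$, so that the two weighted derivations combine as $R_{l-k}[(\bullet-k)-(\bullet+l)]=-(k+l)R_{l-k}$ on $D_{<k}$.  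Without this mechanism your argument has a genuine gap: the leading Virasoro term does not assemble from $[R_l,R_{-k}]$ and the cross terms alone.
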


\begin{proof}
I also write $D\in \mathbb{D}^S$ as a monomial containing only descendants of the form $\ch{i}{v\in B}$. In order to clarify the action of $T_{-k}$, for any $D\in\mathbb{D}^S$, I can rewrite $D$ in the following form (since $S$ only has even cohomology, all descendants commute with each other):
\begin{align}
    D &= \prod_{v\in B}\prod_{\substack{k>i>0}} \left(\ch{i}{v}\right)^{n_{i,v}} \cdot D_0\cdot D_{\geq k}\\
    &= D_{<k}\cdot D_0\cdot D_{\geq k},
\end{align}
where $D_0,D_{<k},D_{\geq k}$ contains, respectively, only descendent of the form $\ch{0}{v\in B}$,$\ch{0<\bullet<k}{v\in B}$ and $\ch{\geq k}{v\in B}$. The reason why I write $D$ in this form is that $T_{-k}$ will only act non trivially on $D_{<k}$ part, and $R_{-k}$ will only act non trivially on $D_{\geq k}$ part.

Let $k>1$, $l\geq -1$ I calculate some commutators now:
\begin{align}
\label{com3}
\begin{split}
    [R_l,T_{-k}](D_{<k}D_0D_{\geq k})&= (R_lT_{-k}D_{<k})(D_0D_{\geq k})+(T_{-k}D_{<k})(R_lD_0D_{\geq k})\\&\quad\quad-(T_{-k}R_{l}D_{<k})(D_0D_{\geq k})-(T_{-k}D_{<k})(R_lD_0D_{\geq k})\\
    &=(R_lT_{-k}D_{<k})(D_0D_{\geq k}) - (T_{-k}R_{l}D_{<k})(D_0D_{\geq k})\\
    \end{split}
\end{align}
To calculate $R_lT_{-k}D_{<k}-T_{-k}R_{l}D_{<k}$, let me calculate the following first:
\begin{align}
\label{onecom}
    (R_ld_j[v]d_i[v^\vee]-\operatorname{d}_j[v]\operatorname{d}_i[v^\vee]R_l)D_{<k},
\end{align}
where $i,j$ are fixed integers satisfying $i+j=k,i>0,j>0$.
\begin{comment}
    If $R_l$ acts on $\ch{q}{\gamma}$ with $\gamma\neq v,v^\vee$ and $q\in\mathbb{N}$ then \eqref{onecom}) vanishes. 
\end{comment}
If $R_l$ acts on $\ch{i-l}{v}$ in $D_{<k}$ then its contribution to $-\operatorname{d}_j[v]\operatorname{d}_i[v^\vee]R_lD_{<k}$ is:
\begin{align}
\label{diffres1}
    -\frac{i!}{(i-l-1)!}\frac{1}{(i-1)!}\frac{1}{(j-1)!}n_{i-l,v}n_{j,v^\vee}(n_{i,v}+1) \frac{D_{<k}}{\ch{i-l}{v}\ch{j}{v^\vee}};
\end{align}
when $R_l$ acts on $\ch{j-l}{v^\vee}$, it gives:
\begin{align}
\label{diffres2}
    -\frac{j!}{(j-l-1)!}\frac{1}{(i-1)!}\frac{1}{(j-1)!}n_{j-l,v^\vee}(n_{j,v^\vee}+1)n_{i,v} \frac{D_{<k}}{\ch{j-l}{v^\vee}\ch{i}{v}};
\end{align}
when $R_l$ acts on $\ch{i}{v}$ or on $\ch{j}{v^\vee}$ it gives respectively:
\begin{align}
\label{samereslt1}
    &-\frac{(l+i)!}{(i-1)!}\frac{1}{(i-1)!}\frac{1}{(j-1)!}n_{i,v}n_{j,v^\vee}(n_{i,v}-1) \frac{D_{<k}\ch{i+l}{v}}{\ch{i}{v}\ch{j}{v^\vee}\ch{i}{v}}\quad\text{ and }\\
    \label{samereslt3}
    &-\frac{(l+j))!}{(j-1)!}\frac{1}{(i-1)!}\frac{1}{(j-1)!}n_{j,v^\vee}(n_{j,v^\vee}-1)n_{i,v} \frac{D_{<k}\ch{j+l}{v^\vee}}{\ch{j}{v^\vee}\ch{j}{v^\vee}\ch{i}{v}}
\end{align}
when $R_l$ acts on other descendants, say $\ch{p}{w}$, it gives:
\begin{align}
\label{samereslt2}
    -\frac{(l+p))!}{(p-1)!}\frac{1}{(i-1)!}\frac{1}{(j-1)!}n_{p,w}n_{j,v^\vee}n_{i,v} \frac{D_{<k}\ch{p+l}{w}}{\ch{p}{w}\ch{j}{v^\vee}\ch{i}{v}}.
\end{align}

Now, let us look at the first term of \eqref{onecom}:
\begin{align}
R_ld_j[v]d_i[v^\vee]D_{<k}=\frac{1}{(i-1)!}\frac{1}{(j-1)!}n_{j,v^\vee}n_{i,v}R_l\frac{D_{<k}}{\ch{j}{v^\vee}\ch{i}{v}},
\end{align}
It is easy to see that when $R_l$ does not act on $\ch{i-l}{v}$ and not on $\ch{j-l}{v^\vee}$, but on $\ch{i}{v},\ch{j}{v^\vee},\ch{p}{w}$ respectively, it gives $-\eqref{samereslt1}$, $-\eqref{samereslt3}$ and $-\eqref{samereslt2}$ as above. When $R_l$ acts on $\ch{i-l}{v}$ or on $\ch{j-l}{v^\vee}$ it gives:
\begin{align}
\label{diffres21}
    &\frac{i!}{(i-l-1)!}\frac{1}{(i-1)!}\frac{1}{(j-1)!}n_{i-l,v}n_{j,v^\vee}n_{i,v} \frac{D_{<k}}{\ch{i-l}{v}\ch{j}{v^\vee}}\quad \text{ and } \\
    \label{diffres22}
    &\frac{j!}{(j-l-1)!}\frac{1}{(i-1)!}\frac{1}{(j-1)!}n_{j-l,v^\vee}n_{j,v^\vee}n_{i,v} \frac{D_{<k}}{\ch{j-l}{v^\vee}\ch{i}{v}}.
\end{align}
Summing \eqref{diffres1}, \eqref{diffres2}, \eqref{diffres21} and \eqref{diffres22} gives:
\begin{align}
    \eqref{onecom}&= -\frac{i!}{(i-l-1)!}\frac{1}{(i-1)!}\frac{1}{(j-1)!}n_{i-l,v}n_{j,v^\vee} \frac{D_{<k}}{\ch{i-l}{v}\ch{j}{v^\vee}}\\
    &\quad\quad-\frac{j!}{(j-l-1)!}\frac{1}{(i-1)!}\frac{1}{(j-1)!}n_{j-l,v^\vee}n_{i,v} \frac{D_{<k}}{\ch{j-l}{v^\vee}\ch{i}{v}}\\
    &= -i\operatorname{d}_{i-l}[v^\vee]\operatorname{d}_{j}[v] D_{<k}-j\operatorname{d}_{i}[v^\vee]\operatorname{d}_{j-l}[v] D_{<k}
\end{align}

Summing terms of the form \eqref{onecom} with according coefficients over $i+j=k,i>0,j>0$ and over $B$ one gets:
\begin{align}
\label{complicatecal}
    [R_l,T_{-k}]= \begin{cases}
      (-l-k) T_{-(k-l)} , & \text{if}\ l-k<0 \\
      0, & \text{otherwise}.
    \end{cases}
\end{align}

Next, I calculate the following commutator:
\begin{align}
\label{com4}
\begin{split}
    [T_l,R_{-k}](D_{<k}D_0D_{\geq k})&= T_lD_{<k}D_0(R_{-k}D_{\geq k})-(R_{-k}T_l)D_{<k}D_0D_{\geq k}\\
    &\quad\quad\quad-T_lD_{<k}D_0(R_{-k}D_{\geq k})\\
    &=-(R_{-k}T_l)D_{<k}D_0D_{\geq k}\\
   % &=-\left( 2\cdot(-1)^{p^L} \frac{k!}{1\cdot...\cdot(k-1)} \operatorname{ch}^\mathrm{H}_0\ch{0}{\operatorname{td}_S} \right)D_{<k}D_0D_{\geq k}\\
   % &=-2k T_0 D_{<k}D_0D_{\geq k}
    \end{split}.
\end{align}
If $l<k$, $R_{-k}T_l$ vanishes. Suppose now $l\geq k$, then:
\begin{align}
    R_{-k}T_l&= R_{-k}\sum_{i+j=l} (-1)^{\operatorname{dim}(S)-p^L}i!j!\operatorname{ch}^{\operatorname{H}}_i\ch{j}{\operatorname{td}_S}\\
    &=\sum_{i+j=l}(-1)^{\operatorname{dim}(S)-p^L}\left( i!j!\frac{(i-k)!}{(i-1)!}\operatorname{ch}^{\operatorname{H}}_{i-k}\ch{j}{\operatorname{td}_S} + i!j!\frac{(j-k)!}{(j-1)!}\operatorname{ch}^{\operatorname{H}}_i\ch{j-k}{\operatorname{td}_S}\right)\\
    &=(l+k)T_{l-k},
\end{align}
where the convention $\ch{<0}{\bullet}=0$ is used.
Therefore:
\begin{align}
    [T_l,R_{-k}]=\begin{cases}
      (-l-k)T_{l-k} , & \text{if}\ l-k\geq0 \\
      0, & \text{otherwise}.
    \end{cases}
\end{align}

Next, I calculate $[T_{-k},T_l]D$. Generally, for two derivations operators $d_1,d_2$ one has: $[d_1d_2, T_l]D=(d_1d_2T_l)D+(d_1T_l)(d_2D)+(d_2T_l)(d_1D)$. Because the operator $T_{-k}$ is the sum of compositions of two derivatives, one can use this equality to calculate $[T_{-k},T_l]D$. As mentioned before, derivations $d_i[\bullet]$ with $ 0<i<k$ only act non trivially on the component $D_{<k}$, therefore we can perform the following calculations:

\begin{align}
\label{TT21}
\begin{split}
    &-\frac{1}{4}\sum_{v\in B} \sum_{\substack{i+j=k\\i>0\\j>0}}(-1)^{p_v} (\operatorname{d}_j[v]T_l)(\operatorname{d}_i[v^\vee]D_{<k}) \\
    &=  -\frac{1}{4}\sum_{v\in B} \sum_{\substack{i+j=k\\1\leq j \leq \operatorname{min}(k-1,l)}} \left(\frac{2}{(j-1)!}(-1)^{p_v}j!(l-j)!\ch{l-j}{v}\right)\left((-1)^{p_v}\frac{1}{(i-1)!}\cdot n_{i,v}\cdot \frac{ D_{<k}}{ \ch{i}{v}} \right) \\
    &\quad\quad-\frac{1}{4} \sum_{\substack{i+j=k\\1\leq j \leq \operatorname{min}(k-1,l)}}\left(\left(\int_S\operatorname{td}_S\right)\cdot \frac{2}{(j-1)!}j!(l-j)!\ch{l-j}{\boldsymbol{\operatorname{p}}} \right)\left( \frac{1}{(i-1)!}\cdot n_{i,\mathbbm{1}}\cdot \frac{D_{<k}}{\ch{i}{\mathbbm{1}}}\right)\\
    &= -\frac{1}{2}\sum_{v\in B} \sum_{\substack{i+j=k\\1\leq j \leq \operatorname{min}(k-1,l)}} \left(\frac{(i-(k-l))!}{(i-1)!}(i-k)\cdot n_{i,v}\cdot \frac{D_{<k}\ch{i-(k-l)}{v}}{\ch{i}{v}} \right) \\
    &\quad\quad- \frac{1}{2}\left(\int_S\operatorname{td}_S\right)\sum_{\substack{i+j=k\\1\leq j \leq \operatorname{min}(k-1,l)}} \left( \frac{(i-(k-l))!}{(i-1)!}jn_{i,\mathbbm{1}} \frac{D_{<k}\ch{l-j}{\boldsymbol{\operatorname{p}}}}{\ch{i}{\mathbbm{1}}}\right)
    \end{split}.
\end{align}
Similarly:
\begin{align}
\label{TT22}
\begin{split}
     &-\frac{1}{4}\sum_{v\in B} \sum_{\substack{i+j=k\\i>0\\j>0}}(-1)^{p_v}  (\operatorname{d}_j[v^\vee]T_l)(\operatorname{d}_i[v]D_{<k}) \\
     &= -\frac{1}{2}\sum_{v\in B} \sum_{\substack{i+j=k\\1\leq j \leq \operatorname{min}(k-1,l)}} \left(\frac{(i-(k-l))!}{(i-1)!}(i-k)\cdot n_{i,v^\vee}\cdot \frac{D_{<k}\ch{i-(k-l)}{v^\vee}}{\ch{i}{v^\vee}} \right) \\
    &\quad\quad- \frac{1}{2}\left(\int_S\operatorname{td}_S\right)\sum_{\substack{i+j=k\\1\leq j \leq \operatorname{min}(k-1,l)}} \left( \frac{(i-(k-l))!}{(i-1)!}jn_{i,\mathbbm{1}} \frac{D_{<k}\ch{l-j}{\boldsymbol{\operatorname{p}}}}{\ch{i}{\mathbbm{1}}}\right)\\
     \end{split}.
\end{align}
There is also the following terms:
\begin{align}
\label{TT23}
\begin{split}
    &\frac{1}{4}\left(\int_S\operatorname{td}_S\right)\sum_{\substack{i+j=k\\i>0\\j>0}}  (\operatorname{d}_j[\boldsymbol{\operatorname{p}}]T_l)(\operatorname{d}_i[\boldsymbol{\operatorname{p}}]D_{<k})\\
    &=\frac{1}{4}\left(\int_S\operatorname{td}_S\right)\sum_{\substack{i+j=k\\i>0\\j>0}} \left( \frac{2}{(j-1)!}j!(l-j)!\ch{l-j}{\boldsymbol{\operatorname{p}}}\right)\left( \frac{1}{(i-1)!}\cdot n_{i,\mathbbm{1}}\cdot \frac{D_{<k}}{\ch{i}{\mathbbm{1}}}\right)\\
    &=\frac{1}{2}\left(\int_S\operatorname{td}_S\right)\sum_{\substack{i+j=k\\1\leq j \leq \operatorname{min}(k-1,l)}} \left( \frac{(i-(k-l))!}{(i-1)!}jn_{i,\mathbbm{1}}\frac{D_{<k}\ch{l-j}{\boldsymbol{\operatorname{p}}}}{\ch{i}{\mathbbm{1}}} \right)
    \end{split}\\
    \begin{split}
         &\sum_{\substack{i+j=k\\i>0\\j>0}}  (\operatorname{d}_j[\boldsymbol{\operatorname{p}}]\operatorname{d}_i[\boldsymbol{\operatorname{p}}]T_k)(D_{<k})\\
    &=0
    \end{split}
\end{align}

Summing \eqref{TT21}, \eqref{TT22} and \eqref{TT23} one gets:
\begin{align}
\label{109}
    \begin{split}
        \eqref{TT21}+&\eqref{TT22}+2 \times\eqref{TT23}\\
        &=-\sum_{v\in B} \sum_{\substack{i+j=k\\1\leq j \leq \operatorname{min}(k-1,l)}} \left(\frac{(i-(k-l))!}{(i-1)!}(i-k)\cdot n_{i,v^\vee}\cdot \frac{D_{<k}\ch{i-(k-l)}{v^\vee}}{\ch{i}{v^\vee}} \right)\\
        &=-R_{l-k}[\bullet-k] D_{<k}
    \end{split},
\end{align}
where, for a function $f(\bullet)$ defined on integers, $R_j[f(\bullet)]$ is the derivation acting as $R_j[f(\bullet)]\ch{i}{\gamma}=f(i)R_j\ch{i}{\gamma}$.

There are also the following terms in $[T_{-k},T_l]$ which are non-zero only if $k=l$:
\begin{align}
\label{ddtd}
\begin{split}
    & -\frac{1}{4}\sum_{v\in B} \sum_{\substack{i+j=k\\i>0\\j>0}} (-1)^{p_v}(\operatorname{d}_j[v]\operatorname{d}_i[v^\vee]T_k)D_{<k}\\
    &=-\frac{1}{4}\sum_{v\in B} \sum_{\substack{i+j=k\\i>0\\j>0}} (-1)^{p_v}\left( 2 (-1)^{p_v}ij \right)D_{<k}\\
    &=-\frac{1}{2}\sum_{v\in B} \sum_{\substack{i+j=k\\i>0\\j>0}}ijD_{<k}\\
    &=-\frac{1}{2}\sum_{v\in B} \sum_{\substack{k>i>0}}(ik-i^2)D_{<k}\\
    &=-\sum_{v\in B}\frac{1}{2}\left(k\cdot \frac{k(k-1)}{2}-\frac{(k-1)k(2k-1)}{6} \right)D_{<k}\\
    &=-\frac{k^3-k}{12} \cdot e(S)D_{<k}, 
\end{split}
\end{align}
where $e(S)$ is the topological Euler characteristic and, for K3, $e(S) = |B|= \operatorname{dim}H^\ast(S,\C)=24$.

Next, I calculate the following commutator:
\begin{align}
\begin{split}
\label{com2}
     &[R_l,R_{-k}](D_{<k}D_0D_{\geq k})\\
    &= (R_lR_{-k}-R_{-k}R_l)D_{<k}D_0D_{\geq k}\\
    &=(R_lD_{<k}D_0)(R_{-k}D_{\geq k})+D_{<k}D_0( R_lR_{-k}D_{\geq k})-(R_{-k}R_lD_{<k}D_0)D_{\geq k}\\&\quad-(R_lD_{<k}D_0) (R_{-k}D_{\geq k})  -D_{<k}D_0 (R_{-k}R_lD_{\geq k})\\
    &=D_{<k}D_0( R_lR_{-k}D_{\geq k})  -D_{<k}D_0 (R_{-k}R_lD_{\geq k})-(R_{-k}R_lD_{<k}D_0)D_{\geq k}\\
    &=D_{<k}D_0((-k-l)R_{l-k}D_{\geq k})-(R_{l-k}[(\bullet+l)]D_{<k}D_0)D_{\geq k}.
\end{split}
\end{align}

Combining \eqref{109} with \eqref{com2}, one has (notice that $R_{k}\ch{0}{\bullet}=0,\forall k\in \mathbb{Z}$):
\begin{align}
\begin{split}
    -\eqref{109}+\eqref{com2} &= D_{<k}D_0((-k-l)R_{l-k}D_{\geq k})-(R_{l-k}[(\bullet+l)]D_{<k}D_0)D_{\geq k} \\
    &\quad\quad+ (R_{l-k}[\bullet-k] D_{<k} D_0)D_{\geq k}\\
    &=D_{<k}D_0((-k-l)R_{l-k}D_{\geq k}) + ((-k-l)R_{l-k} D_{<k} D_0)D_{\geq k}\\
    &=(-k-l)R_{l-k}(D_{<k} D_0D_{\geq k})
\end{split}
\end{align}

Summing the above commutators one has:
\begin{align}
    [L_{l},L_{-k}]=(-l-k)L_{l-k}+\frac{k^3-k}{12} \cdot e(S)\cdot\delta_{kl}
\end{align}

Let me calculate $[L_{-l},L_{-k}]$ next.
\begin{align}
    [R_{-l},R_{-k}]\ch{i}{\gamma}&=(R_{-l}R_{-k}-R_{-k}R_{-l})\ch{i}{\gamma}\\
    &=\frac{(i-k-l)!}{(i-k-1)!}\frac{(i-k)!}{(i-1)!}\ch{i-k-l}{\gamma}\\
    &\quad\quad\quad\quad-\frac{(i-k-l)!}{(i-l-1)!}\frac{(i-l)!}{(i-1)!}\ch{i-k-l}{\gamma}\\
    &=(l-k)R_{-k-l}\ch{i}{\gamma}, \quad\text{for $i\geq l+k$}.\\
     [R_{-l},R_{-k}]\ch{i}{\gamma}&=0, \quad\text{for $i< l+k$}.
\end{align}
Therefore $[R_{-l},R_{-k}]=R_{-k-l}$. Next, I calculate $[R_{-l},T_{-k}]D$. Firstly, consider the following term:
\begin{align}
\label{124}
    (R_{-l}\operatorname{d}_j[v^\vee]\operatorname{d}_{k-j}[v]-\operatorname{d}_j[v^\vee]\operatorname{d}_{k-j}[v]R_{-l})D.
\end{align}
Similar as the calculation for \eqref{complicatecal}, the contribution to the above term vanishes unless $R_{-l}$ acts on one of the following symbols: \[\ch{j}{v}, \ch{k-j}{v^\vee}, \ch{j+l}{v}, \ch{k-j+l}{v^\vee}.\]
By an easy calculation, one finds that when $R_{-l}$ acts on $\ch{j}{v}, \ch{k-j}{v^\vee}$, the contribution to \eqref{124} also vanishes. Let me calculate the case when $R_{-l}$ acts on $\ch{j+l}{v}$:
\begin{align*}
    R_{-l}&\operatorname{d}_j[v^\vee]\operatorname{d}_{k-j}[v]D \\&= \frac{1}{(j-1)!}\frac{1}{(k-j-1)!}n_{j,v}n_{k-j,v^\vee}n_{v,j+l}\frac{j!}{(j+l-1)!}\frac{D}{\ch{j+l}{v}\ch{k-j}{v^\vee}}\\
    \operatorname{d}_j[v^\vee]&\operatorname{d}_{k-j}[v]R_{-l}D\\&=\frac{1}{(j-1)!}\frac{1}{(k-j-1)!}(n_{j,v}+1)n_{k-j,v^\vee}n_{v,j+l}\frac{j!}{(j+l-1)!}\frac{D}{\ch{j+l}{v}\ch{k-j}{v^\vee}}\\
\end{align*}
Therefore:
\begin{align*}
    \eqref{124}=-j\operatorname{d}[v^\vee,j+l]\operatorname{d}[v,k-j] D.
\end{align*}
The contribution of $\ch{k-j+l}{v^\vee}$ can be calculated similarly. 
Summing over $i+j=k,i>0,j>0$ those two type of contributions, one has:
\begin{align}
\label{125}
    \sum_{\substack{i+j=k+l\\j\geq l+1\\i\leq k-1}}-(j-l)\operatorname{d}_j[v^\vee]\operatorname{d}_i[v] - \sum_{\substack{i+j=k+l\\i\geq l+1\\j\leq k-1}}(i-l)\operatorname{d}_j[v^\vee]\operatorname{d}_i[v].
\end{align}
For the commutator $[T_{-l},R_{-k}]D$, we have the following term:
\begin{align}
\label{126}
    \sum_{\substack{i+j=k+l\\j\geq k+1\\i\leq l-1}}(j-k)\operatorname{d}_j[v^\vee]\operatorname{d}_i[v] + \sum_{\substack{i+j=k+l\\i\geq k+1\\j\leq l-1}}(i-k)\operatorname{d}_j[v^\vee]\operatorname{d}_i[v]
\end{align}
Summing the first term of \eqref{125} and the second term of \eqref{126} and summing the second term of \eqref{125} and the first term of \eqref{126}:
\begin{align}
    \eqref{125}+\eqref{126} &= \sum_{\substack{i+j=k+l\\j\geq 1\\i\geq 1}}-(j-l)\operatorname{d}_j[v^\vee]\operatorname{d}_i[v]-\sum_{\substack{i+j=k+l\\j\geq 1\\i\geq 1}}(i-l)\operatorname{d}_j[v^\vee]\operatorname{d}_i[v]\\
    &=(l-k)\sum_{\substack{i+j=k+l\\j\geq 1\\i\geq 1}}\operatorname{d}_j[v^\vee]\operatorname{d}_i[v]
\end{align}
Summing over $v\in B$ with corresponding coefficients, one has:
\begin{align}
    [R_{-l},T_{-k}]+[T_{-l},R_{-k}]=(l-k)T_{-l-k}.
\end{align}
For the commutator $[T_{-l},T_{-k}]$, one can easily see that this vanishes. Therefore the following commutation relation is proven:
\begin{align}
    [L_{-l},L_{-k}]= (l-k)L_{-l-k}, \text{ for $l>1,k>1$}.
\end{align}
For $m,n\geq -1 $, it has been shown (lemma 2.11 \cite{vanbree2021virasoro}) that
\begin{align}
    [L_{m},L_{n}]= (-m+n)L_{m+n}, \text{ for $m,n\geq -1$}.
\end{align}
Therefore the Proposition \ref{negvir} is proven.
\end{proof}

\printbibliography
\end{document}